\documentclass[11pt]{amsart}
\textwidth=15.5truecm
\textheight=23truecm
\oddsidemargin=0.3truecm
\evensidemargin=0.2truecm
\topmargin=-0.5truecm

\usepackage[english]{babel}
\usepackage[latin1]{inputenc}
\usepackage{float}
\usepackage[english, algosection, algoruled, noline]{algorithm2e}
\usepackage{latexsym}
\usepackage{amsmath}
\usepackage{amssymb}
\usepackage{graphicx}
\usepackage{amsthm}
\usepackage{amsfonts}

\usepackage[T1]{fontenc}

\usepackage{pdflscape}

\newtheorem{thm}{Theorem}[section]
\newtheorem{prop}[thm]{Proposition}
\newtheorem{cor}[thm]{Corollary}

\newtheorem{lem}[thm]{Lemma}
\newtheorem{defn}[thm]{Definition}
\newtheorem{example}[thm]{Example}
\newtheorem{remark}[thm]{Remark}

\newcommand{\R}{\mathbb{R}}

\newcommand{\C}{\mathbb{C}}
\newcommand{\N}{\mathbb{N}}

\newcommand{\K}{\mathbb{K}}
\newcommand{\Le}{\mathbb{Le}}
\newcommand{\XX}{\mathcal{M}}
\newcommand{\CC}{\mathcal{C}}
\newcommand{\Ac}{\mathcal{A}} 
\newcommand{\Lc}{\mathcal{L}} 
\newcommand{\Sc}{\mathcal{S}} 
\newcommand{\BB}{\mathcal{B}}
\newcommand{\QPK}{\K[\xx]/I}
\newcommand{\Mon}{\mathcal{M}}
\newcommand{\Span}[1]{\<{#1}\>}
\newcommand{\clK}{\overline{\mathbb K}}
\newcommand{\xx}{\mathbf x}

\newcommand{\SpanD}[2]{\<#1\,|\,{ #2}\>}

\newcommand{\piFB}{\pi_{F,\BB}}
\newcommand{\rank}{\mathrm{rank}}
\newcommand{\ev}{\ensuremath{\boldsymbol{1}}}
\newcommand{\tmmathbf}[1]{\ensuremath{\boldsymbol{#1}}}
\newcommand{\dual}[1]{{#1}^*}
\def\<{\langle}
\def\>{\rangle}

\begin{document}

\title{Unconstraint global polynomial optimization via Gradient Ideal}

\author{Marta Abril Bucero, Bernard Mourrain, Philippe Trebuchet}
\address{Marta Abril Bucero, Bernard Mourrain: Galaad, Inria
  M\'editerran\'ee, 06902 Sophia Antipolis}
\address{Philippe Trebuchet: APR, LIP6, 4 place Jussieu 75005 Paris, France}

\maketitle



\begin{abstract}
  In this paper, we describe a new method to compute the minimum of a
  real polynomial function and the ideal defining the points which
  minimize this polynomial function, assuming that the minimizer ideal
  is zero-dimensional. Our method is a generalization of
  Lasserre relaxation method and stops in a finite number of steps.
  The proposed algorithm combines Border Basis, Moment Matrices and
  Semidefinite Programming.  In the case where the minimum is reached
  at a finite number of points, it provides a border basis of the
  minimizer ideal.
\end{abstract}

\setcounter{page}{1}

\section{Introduction}

Optimization appears in many areas of Scientific Computing, since the
solution of a problem can often be described as the minimum of an
optimization problem. 
Local methods such as gradient descent are often employed to handle 
global minimization problems. They can be very efficient to compute a local
minimum, but the output depends on the initial guess and 
they give no guarantee of a global solution.

In the case where the function $f$ to minimize is a polynomial,
it is possible to develop methods which ensure the 
computation of a global solution. Reformulating the problem as the
computation of a (minimal) critical value of the polynomial $f$, different polynomial
system solvers can be used to tackle it (see
e.g. \cite{Parrilo03minimizingpolynomial}, 
\cite{Greuet:2011:DRI:1993886.1993910}).
But in this case, the complex solutions of the underlying algebraic
system come into play and additional computation efforts should be
spent to remove these extraneous solutions.
Semi-algebraic techniques such as Cylindrical Algebraic
Decomposition or extensions \cite{ElDin:2008:CGO:1390768.1390781}
may also be considered here but suffer from similar issues.
Though the global minimization problem is known to be NP-hard (see
e.g. \cite{Nesterov2000}),
a practical challenge is to device methods which take into account ``only'' the real 
solutions of the problem or which can approximate them efficiently.

\noindent{}\textbf{Previous works.}
About a decade ago, a relaxation approach has been proposed in
\cite{Las01} (see also \cite{Par03}, \cite{Shor87}) to solve this
difficult problem. 
Instead of searching points where the polynomial $f$ reaches its
minimum $f^{*}$, a probability measure
which minimizes the function $f$ is searched. This problem is relaxed into a
hierarchy of finite dimensional convex minimization problems, that can
be solved by Semi-Definite Programming (SDP) techniques, and which
converges to the minimum $f^{*}$ \cite{Las01}. This hierarchy of
SDP problems can be formulated in terms of linear matrix inequalities
on moment matrices associated to the set of monomials of degree $\leq
t\in \N$ for increasing values of $t$.  The dual hierarchy can be
described as a sequence of maximization problems over the cone of
polynomials which are Sums of Squares (SoS). A feasibility condition is
needed to prove that this dual hierarchy of maximization problems also
converges to the minimum $f^{*}$, ie. that there is no duality gap.

From a computational point of view, this approach suffers from two drawbacks:
\begin{enumerate}
 \item the hierarchy of optimization problems may not be {\em exact}, ie. it
   may not always  convergence in a finite number of steps;
 \item the size of the SDP problems to be solved {\em  grows 
   exponentially} with $t$.
\end{enumerate}
To address the first issue, the following strategy has been considered:
add polynomial inequalities or equalities satisfied by the points where
the function $f$ is minimum. Inequality constraints can for instance be added 
to restrict the optimization problem to a compact subset of $\R^{n}$ and to make the
hierarchies exact \cite{Las01}, \cite{Marshall03}. 
Natural constraints which do not require apriori bounds on the
solutions are for instance the vanishing of the partial derivatives of $f$.
A result in \cite{Lau07} shows that if the gradient ideal (generated by
the  first differentials of $f$) is zero-dimensional, then 
the hierarchy extended with constraints from the gradient ideal is exact.
It is also proved that there is no duality gap if ``good'' generators of the gradient ideal
are used. 
In \cite{NDS}, it is proved that the extended hierarchy is exact when
the gradient ideal is radical and in \cite{Nie11} the extended
hierarchy is proved to be exact for any polynomial $f$
when the minimum is reached in $\R^{n}$ (see also
\cite{Guo:2010:GOP:1837934.1837960}). In \cite{Schweighofer06}, the
relaxation techniques are analyzed for functions for which the minimum is
not reached and which satisfies some special properties ``at infinity''.

From an algorithmic point of view, this is not ending the investigations since
a criteria is needed to determine at which step the minimum is
reached. It is also important to known if all the minimizers can be
recovered at that step.

Methods based on moment matrices have been proposed to compute generators of
the ideal characterizing the real solutions of a polynomial system. In \cite{LLR08b}, 
the computation of generators of the (real) radical of an ideal is
based on moment matrices which involve all monomials of a given degree
and a stopping criterion related to Curto-Fialkow flat extension
condition \cite{CF96} is used.
This method is improved in \cite{lasserre:hal-00651759}.
Combining the border basis algorithm of \cite{Mourrain2005} and a weaker
flat extension condition \cite{MoLa2008}, a new algorithm which involves 
SDP problems of significantly smaller size is proposed to compute the
(real) radical of an ideal, when this (real) radical ideal is zero-dimensional.
In \cite{NDS}, an algorithm is also proposed to compute the global
minimum of the polynomial function $f$ based on techniques from 
\cite{Lau07}. It terminates when the gradient ideal is radical
zero-dimensional.

An interesting feature of these hierarchies of SDP problems is that,  at
any step they provide a lower bound of $f^{*}$ and the SoS hierarchy
gives certificates for these lower bounds (see e.g.
\cite{Kaltofen:2012:ECG:2069778.2070141} and reference therein). In
\cite{Hanzon01globalminimization, Jibetean:2005:SAG:1093657.1108704}
it is also shown how to
obtain ``good'' upper bounds by perturbation techniques, which can directly be
generalized to the approach we propose in this paper.

\noindent{}\textbf{Contributions.}
We show that if the minimum $f^{*}$ is reached in $\R^{n}$, a generalized
hierarchy of relaxation problems which involve the gradient ideal $I_{grad} (f)$
is exact and yields the generators of
the minimizer ideal $I_{min} (f)$ from a sufficiently high degree. 

In the case where the minimizer ideal is zero dimensional, we give a
criterion for deciding when the minimum is reached, based on the flat
extension condition in \cite{MoLa2008}. 

This criterion is used in a new algorithm which computes a border
basis of the minimizer ideal of a polynomial function, when this
minimizer ideal is zero-dimensional.

The algorithm is an extension of the real radical algorithm described
in \cite{lasserre:hal-00651759}.  The rows and
columns of the matrices involved in the semi-definite programming problem are
associated with the family of monomials candidates for being a
basis of the quotient space, i.e, a subset of
monomials of size much smaller than the number of monomials of the
same degree.
Thus, the size of the SDP problems involved in this computation is
significantly smaller than the one in \cite{Las01}, \cite{Lau07} or
\cite{NDS}.

We show that by solving this sequence of SDP problems, we obtain in a
finite number of steps the minimum $f^{*}$ of $f$, with no duality gap. When this
minimum is reached, the kernel of the Hankel matrix associated to the
solution of the SDP problem yields generators of the minimizer ideal which
are not in the gradient ideal.
Assuming that the minimizer ideal $I_{min} (f)$ is zero dimensional, 
computing the border basis of this kernel yields a representation
of the quotient algebra by $I_{min} (f)$ and thus a way to compute
effectively the minimizer points, using eigenvector solvers.

An implementation of this method has been developed, which integrates a border basis implementation 
and a numerical SDP solver. It is demonstrated on typical examples.

\noindent{}\textbf{Content.}
  The paper is organized as follows. Section 2 recalls the concepts of algebraic tools as ideals, varieties, dual space,
quotient algebra, the definitions and theorems about border basis,
and the Hankel Operators involved
in the computation of (real) radical ideals and in the computation of
our minimizer ideal. In Section 3, we describe the main results on
the exactness of the hierarchy of SDP problems on truncated Hankel
operators and show that 
the minimizer ideal can be computed from the kernel of these Hankel.
In Section 4, we analyze more precisely the case where the minimizer
ideal is zero-dimensional. In section 5, we describe our algorithm and
we prove its correctness. Finally in Section 6, we illustrate the
algorithm on some classical examples.

\section{Ideals, dual space, quotient algebra and border basis}\label{sec:2}
In this section, we set our notation and recall the eigenvalue techniques 
for solving polynomial equations and the border basis method. 

\subsection{Ideals and varieties}
Let $\K[\xx]$ be the set of the polynomials in the variables
$\xx=(x_1,\ldots$, $x_n)$, with coefficients in the field
$\K$. Hereafter, we will choose\footnote{For notational simplicity, we will consider only these two fields in this paper, but $\R$ and $\C$ can
be replaced respectively by any real closed field and any field containing its algebraic closure.}
 $\K=\R$ or $\C$.
 Let $\overline{\K}$ denotes the algebraic closure of ${\K}$.
For $\alpha \in \N^n$, $\xx^{\alpha}= x_1^{\alpha_1} \cdots x_n^{\alpha_n}$ is the monomial with exponent $\alpha$
and degree $|\alpha|=\sum_i\alpha_i$.  The set of all monomials in $\xx$ is
denoted $\Mon = \Mon(\xx)$. We say that $\xx^{\alpha} \le \xx^{\beta}$ if
$\xx^{\alpha}$ divides $\xx^{\beta}$, i.e., if $\alpha\le \beta$ coordinate-wise.
For a polynomial $f=\sum_\alpha f_\alpha \xx^\alpha$, its support is 
$supp(f):=\{\xx^\alpha\mid f_\alpha\ne 0\}$, the set of monomials occurring with a nonzero coefficient in $f$.

 For $t\in \N$ and $S\subseteq \K[\xx]$, we introduce the following sets:
\begin{itemize}
 \item $S_{t}$ is the set of elements of $S$ of degree $\le t$,
\item $\Span{S} = \big\{ \sum_{f\in S} \lambda_{f}\, f\ |\ f\in S, \lambda_f\in \K\big\}$ is the linear span of $S$,
\item $(S) = \big\{ \sum_{f\in S} p_f\, f \ | \ p_f \in \K[\xx], f \in S \big\}$ is the ideal in $\K[\xx]$ generated by $S$,
\item $\SpanD{S}{t} = \big\{ \sum_{f\in S_t} p_f\, f\ | \ p_f \in \K[\xx]_{t-\deg(f)}\big\}$ is the vector space spanned by $\{\xx^\alpha  f\mid
  f\in S_t, |\alpha|\le t-\deg(f)\}$, 
\item $S^+:=S\cup x_1S \cup \ldots \cup x_n S$ is the prolongation of $S$ by one degree,
\item $\partial S:= S^+\setminus S$ is the border of $S$,
\item $S^{[t]}:= S^{\stackrel{t\ \mathrm{times}}{+\cdots+}}$ is the result of applying $t$ times the prolongation operator `$^{+}$' on $S$, with 
$S^{[1]}=S^+$ and, by convention, $S^{[0]}=S$.
\end{itemize}
Therefore,
$S_t=S\cap \K[\xx]_t$, $S^{[t]}=\{x^\alpha f\mid f\in S, |\alpha|\le t\}$, 
$\SpanD{S}{t} \subseteq (S)\cap \K[\xx]_t= (S)_t$, but the inclusion may be strict.

If $\BB \subseteq \Mon$ contains $1$ then, for any monomial $m \in \Mon$, there exists an integer 
$k$ for which $m\in \BB^{[k]}$. The {\em $\BB$-index} of $m$, denoted by $\delta_{\BB}(m)$,
 is defined as the smallest integer $k$ for which  $m\in \BB^{[k]}$.

A set of monomials $\BB$ is said to be {\em connected to $1$} if $1\in \BB$ and for every monomial $m\neq 1$ in $\BB$, $m= x_{i_{0}} m'$ for
some $i_{0}\in [1,n]$ and $m'\in \BB$. 

Given a vector space $E \subseteq \K$, its prolongation $ E^+ : = E + x_1 E + \ldots + x_n E$ is again a vector space.

The vector space $E$ is said to be \textit{connected to 1} if $1\in E$ and any non-constant polynomial $p \in E$ can be written as $p = p_0+ \sum_{i = 1}^n x_i p_i$
for some polynomials $p_0,p_i\in E$ with $\deg(p_0)\le \deg(p)$, $\deg(p_i)\le \deg(p)-1$ for $i\in [1,n]$.
 
Obviously, $E$ is connected to 1 when $E=\Span{\CC}$ for some monomial set $\CC\subseteq \Mon$ which is connected to 1.  
Moreover, $E^+ = \Span{\CC^+}$ if $E=\Span{\CC}$.

\medskip
Given an ideal $I\subseteq \K[\xx]$ and a field $\Le \supseteq \K$, we denote by
\begin{equation*}
 V_{\Le}(I):=\{x\in \Le^n\mid f(x)=0\ \forall f\in I\}
\end{equation*}
its associated variety in $\Le^{n}$. By convention $V(I)=V_{\clK}(I)$. For a set
$V\subseteq \K^n$, we define its vanishing ideal
\[I(V):=\{f\in \K[\xx]\mid f(v)=0\ \forall v\in V\}.\]
Furthermore, we denote by
\[\sqrt I:=\{f\in \K[\xx] \,\mid\, f^m\in I \ \text{ for some } m\in \N\setminus \{0\}\}\]
the radical of $I$. 

For $\K=\R$, we have $V(I)=V_{\C}(I)$, but one may also be interested in the subset of real solutions, namely the real variety
$V_\R(I)=V(I)\cap \R^n.$
The corresponding vanishing ideal is $I(V_\R(I))$ and the \emph{real radical ideal} is
\[\sqrt[\R]I:=\{p\in \R[\xx]\mid p^{2m} +\sum_j q_j^2\in I \ \text{ for some }q_j\in \R[\xx], m\in \N\setminus \{0\}\}.\]
Obviously,
\[I\subseteq \sqrt I\subseteq I(V_{\C}(I)),\ \  I\subseteq \sqrt[\R]I\subseteq I(V_\R(I)).\]
An ideal $I$ is said to be {\em radical} (resp., {\em real radical}) if $I=\sqrt I$ (resp. $I= \sqrt[\R]I$). Obviously, $I\subseteq
I(V(I))\subseteq I(V_\R(I))$. Hence, if $I\subseteq \R$ is real radical, then $I$ is radical and moreover, $V(I)=V_\R(I)\subseteq
\R^n$ if $|V_\R(I)|<\infty$.

The following two famous theorems relate vanishing and radical
ideals:
\begin{thm}\ 
\begin{itemize}
\item[(i)] {\bf Hilbert's Nullstellensatz} (see, e.g., \cite[\S 4.1]{CLO97})
  $\sqrt I=I(V_{\C}(I))$ for an ideal $I\subseteq \C[\xx]$.
\item[(ii)] {\bf Real Nullstellensatz} (see, e.g., \cite[\S 4.1]{BCR98})
$ \sqrt[\R]I=I(V_\R(I))$ for  an  ideal $I\subseteq \R[\xx]$.
\end{itemize}
\end{thm}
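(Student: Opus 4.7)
The plan is to treat (i) and (ii) separately, since in each case one inclusion is immediate while the reverse requires genuine work: for (i) the Rabinowitsch trick reduces to the weak Nullstellensatz, and for (ii) one needs a real-algebraic transfer argument.

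First I would dispose of the easy containments. For (i), if $f^m \in I$ then $f^m$ vanishes on every point of $V_\C(I)$, hence so does $f$, giving $\sqrt I \subseteq I(V_\C(I))$. For (ii), if $p^{2m} + \sum_j q_j^2 \in I$ and $v \in V_\R(I)$, then $p(v)^{2m} + \sum_j q_j(v)^2 = 0$; since $\R$ is formally real and every summand is non-negative, each must vanish and in particular $p(v) = 0$, yielding $\sqrt[\R] I \subseteq I(V_\R(I))$.

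For the reverse inclusion in (i), my strategy is the classical Rabinowitsch trick, which reduces the statement to the \emph{weak} Nullstellensatz (every proper ideal of $\C[\xx]$ has a common zero in $\C^n$). Given $f \in I(V_\C(I))$, I would introduce a fresh variable $y$ and form $J = (I, 1-yf) \subseteq \C[\xx,y]$. Any common zero $(x,y_0)$ of $J$ would satisfy $x \in V_\C(I)$, forcing $f(x) = 0$, which contradicts $1 - y_0 f(x) = 0$; hence $V_\C(J) = \emptyset$. By the weak Nullstellensatz, $1 \in J$, so $1 = \sum_i h_i g_i + h\,(1-yf)$ with $g_i \in I$; substituting $y = 1/f$ and clearing denominators by multiplying through by $f^N$ for $N$ large enough gives $f^N \in I$. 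The weak Nullstellensatz itself I would prove via Noether normalization, reducing to Zariski's lemma: a finitely generated $\C$-algebra that is a field equals $\C$, which identifies the maximal ideals of $\C[\xx]$ with the points of $\C^n$.

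The hard part, and what I expect to be the main obstacle, is the inclusion $I(V_\R(I)) \subseteq \sqrt[\R] I$ in (ii). My plan is a contrapositive argument: assuming $f \notin \sqrt[\R] I$, I would produce $v \in V_\R(I)$ with $f(v) \neq 0$. The decisive algebraic fact is that $\sqrt[\R] I$ coincides with the intersection of the \emph{real prime} ideals containing $I$, i.e., primes $P$ for which $\R[\xx]/P$ is formally real. Combining the hypothesis with a Zorn-type maximality argument applied to the multiplicative set $\{1, f^2, f^4, \ldots\}$ and the cone of sums of squares, I would extract such a real prime $P \supseteq I$ with $f \notin P$. The fraction field of $\R[\xx]/P$ then admits an ordering extending that of $\R$; passing to its real closure and invoking the Artin--Lang homomorphism theorem (equivalently, the Tarski--Seidenberg transfer principle) produces an $\R$-algebra morphism $\R[\xx]/I \to \R$ on which $f$ does not vanish, i.e., a real point of $V_\R(I)$ where $f$ is nonzero. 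The genuine obstacle here is the Artin--Lang / Tarski transfer input; everything else is standard commutative-algebra packaging around it.
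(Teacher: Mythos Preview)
Your proof outline is sound and follows the standard textbook approach, but there is nothing to compare it against: the paper does not prove this theorem. It is stated as background with pointers to \cite{CLO97} and \cite{BCR98}, and the authors use it as a black box. Your sketch of the Rabinowitsch trick for (i) and the real-prime/Artin--Lang route for (ii) is exactly what one finds in those references, so in effect you have supplied what the paper deliberately omitted.
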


\subsection{The roots from the quotient algebra structure}
Given an ideal $I\subseteq \K[\xx]$, the quotient set $\QPK$ consists of all
cosets $[f]:=f+I=\{f+q \mid q \,\in\,I\}$ for $f\in \K[\xx]$, i.e., all equivalent
classes of polynomials of $\K[\xx]$ modulo the ideal $I$.  The quotient set
$\QPK$ is an algebra with addition $[f]+[g]:=[f+g]$, scalar multiplication
$\lambda [f]:=[\lambda f]$ and with multiplication $[f][g]:=[fg]$, for
$\lambda\in \R$, $f,g\in \K[\xx]$.

We will say that an ideal $I$ is zero-dimensional
if $0<|V_{\clK}(I)|<\infty$. Then
$\QPK$ is a finite-dimensional vector space and its dimension is the
number of roots counted with multiciplicity (see e.g. \cite{CLO97}, \cite{em-07-irsea}).
Thus $|V_{\clK}(I)|\le \dim_{\K}\QPK$, with equality if and only if $I$ is radical.

Assume that $0< |V_{\clK}(I)|<\infty$ and set $N:=\dim_{\K}\QPK$ so that
$|V_{\clK}(I)|\le N<\infty$. Consider a set  
 $\BB:=\{b_1,\ldots,b_N\}\subseteq \K[\xx]$ for which 
$\{[b_1],\ldots,[b_N]\}$ is a basis of $\QPK$; by abuse of language we also say that $\BB$ itself is a basis of
$\QPK$. Then every $f\in \K[\xx]$ can be written in a unique way as $f=\sum_{i=1}^N c_i b_i +p,$ where $c_i\in \K,$ $ p\in I;$
the polynomial $\pi_{I,\BB}(f):=\sum_{i=1}^N c_i b_i$ is called the remainder of $f$ modulo
$I$, or its {\em normal form}, with respect to the basis $\BB$. In other words, $\Span{\BB}$ and $\QPK$ are isomorphic vector spaces. 

Given a polynomial $h\in \K[\xx]$, we can define the {\em multiplication (by $h$) operator} as
\begin{equation}
\label{mult}
\begin{array}{lccl}
\XX_h: & \QPK & \longrightarrow & \QPK\\
     &  [f] & \longmapsto &\XX_h([f])\,:=\, [hf]\,,
\end{array}
\end{equation}
Assume that $N:=\dim_{\K} \QPK<\infty$. Then the multiplication operator $\XX_h$ can be represented by its matrix, again denoted $\XX_h$ for simplicity, with respect to a given basis 
$\BB=\{b_1,\ldots,b_N\}$ of $\QPK$. 

Namely, setting $\pi_{I,\BB}(hb_j):= \sum_{i=1}^N a_{ij}b_i$ for some scalars $a_{ij}\in \K$,
 the $j$th column of $\XX_h$ is the vector $(a_{ij})_{i=1}^N$.

Define the vector
$\zeta_{\BB,v}:=(b_j(v))_{j=1}^N \in \clK^N$, whose coordinates are the
evaluations of the polynomials $b_j \in \BB$ at the point $v\in \clK^n$. The
following famous result (see e.g., \cite[Chapter 2\S4]{CLO98}, \cite{em-07-irsea}) relates the eigenvalues of the multiplication operators in $\QPK$ to the algebraic variety $V(I)$.
This result underlies the so-called eigenvalue method for solving
polynomial equations \cite{SGPhT09} and plays a central role in many
algorithms, also in the present paper. 

\begin{thm}
\label{thm::muloperator} Let $I$ be a zero-dimensional ideal in $\K[\xx]$, $\BB$ a basis of $\QPK$, and $h\in \K[\xx]$.
The eigenvalues of the multiplication operator $\XX_h$ 
are the evaluations $h(v)$ of the polynomial $h$ at the points $v \in V(I)$.
Moreover, $(\XX_h)^T\zeta_{\BB,v}=h(v) \zeta_{\BB,v}$ and
the set of common eigenvectors of $(\XX_h)_{h\in \K[\xx]}$ are up to a
non-zero scalar multiple the
vectors $\zeta_{\BB,v}$ for $v\in V(I)$.
\end{thm}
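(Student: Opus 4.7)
The plan is to unpack the defining relation of $\XX_h$, use evaluation at points of $V(I)$ to obtain the ``moreover'' statement, and then exploit the finite-dimensional structure of $\QPK$ to characterize all eigenvalues.

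First I would observe that, by definition of the matrix entries $a_{ij}$, the polynomial $hb_j-\sum_i a_{ij}b_i$ lies in $I$. Evaluating this identity at any $v\in V(I)$ gives $h(v)b_j(v)=\sum_i a_{ij}b_i(v)$ for every $j$, which in matrix form is exactly $(\XX_h)^T\zeta_{\BB,v}=h(v)\zeta_{\BB,v}$. To conclude that $h(v)$ is indeed an eigenvalue I still need $\zeta_{\BB,v}\neq 0$: writing $1\equiv\sum_i c_i b_i\pmod I$ in the basis $\BB$ and evaluating at $v\in V(I)$ yields $\sum_i c_i b_i(v)=1$, so $\zeta_{\BB,v}\neq 0$.

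For the reverse spectrum inclusion I would invoke the primary decomposition $\QPK\cong\bigoplus_{v\in V(I)} A_v$ valid for any zero-dimensional ideal, where $A_v$ is the local component at $v$ of dimension $m_v$. On $A_v$ the class of $h-h(v)$ is nilpotent (it lies in the maximal ideal of a finite-dimensional local $\clK$-algebra with residue field $\clK$), so the restriction of $\XX_h$ to $A_v$ has the single eigenvalue $h(v)$ with multiplicity $m_v$. Consequently the characteristic polynomial of $\XX_h$ factors over $\clK$ as $\prod_{v\in V(I)}(t-h(v))^{m_v}$, pinning down the spectrum as exactly $\{h(v):v\in V(I)\}$. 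Equivalently, the Nullstellensatz applied to $\prod_v(h-h(v))$, which vanishes on $V(I)$, puts some power in $I$; the corresponding operator identity then traps the eigenvalues of $\XX_h$ in $\{h(v)\}$.

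For the common-eigenvector claim, suppose $\zeta\neq 0$ satisfies $(\XX_h)^T\zeta=\lambda_h\zeta$ for every $h\in\K[\xx]$. Define the linear functional $L_\zeta$ on $\QPK$ by $L_\zeta([b_j]):=\zeta_j$; the eigen-equation translates into $L_\zeta([hf])=\lambda_h L_\zeta([f])$ for all $[f]\in\QPK$ and $h\in\K[\xx]$. Taking $[f]=[1]$ gives $\lambda_h=L_\zeta([h])/L_\zeta([1])$, where $L_\zeta([1])\neq 0$ (otherwise the eigen-equation forces $L_\zeta\equiv 0$ and hence $\zeta=0$). Substituting back and normalizing so that $L_\zeta([1])=1$ yields $L_\zeta([fg])=L_\zeta([f])L_\zeta([g])$, so $L_\zeta$ is an algebra homomorphism $\QPK\to\clK$. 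Such homomorphisms correspond bijectively to points of $V(I)$ via evaluation, and the homomorphism associated to $v$ has coordinates $\zeta_{\BB,v}$ in the basis dual to $\BB$; hence $\zeta$ is a scalar multiple of $\zeta_{\BB,v}$ for some $v\in V(I)$. The main obstacle is the reverse spectrum inclusion, which rests on the structural primary decomposition of $\QPK$ indexed by $V(I)$ and ultimately on the Nullstellensatz over $\clK$; once this is in hand, the remaining manipulations are routine dualities between $\XX_h$, its transpose on the dual of $\QPK$, and the evaluation homomorphisms at points of $V(I)$.
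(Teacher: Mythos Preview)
Your argument is correct and follows the standard route: verify the eigenvector identity by evaluating the defining relation $hb_j-\sum_i a_{ij}b_i\in I$ at $v\in V(I)$, use the primary decomposition of the Artinian $\clK$-algebra $\QPK$ to show that every eigenvalue of $\XX_h$ arises as some $h(v)$, and finally identify common eigenvectors of the transposes with $\clK$-algebra homomorphisms $\QPK\to\clK$, hence with points of $V(I)$.

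There is nothing to compare against here: the paper does not supply its own proof of this theorem but simply quotes it as a classical fact, referring to \cite[Chapter~2,\S4]{CLO98} and \cite{em-07-irsea}. Your write-up is essentially the argument one finds in those references. One small remark: in the common-eigenvector step you tacitly pass to the algebraic closure (the homomorphisms $\QPK\to\clK$ are the ones in bijection with $V(I)=V_{\clK}(I)$); this is consistent with the paper's convention $V(I)=V_{\clK}(I)$, but it is worth making explicit since the multiplication matrices are defined over $\K$ while the eigenvectors $\zeta_{\BB,v}$ live over $\clK$.
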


Throughout the paper we also denote by $\XX_i:=\XX_{x_i}$ the matrix of the multiplication operator by the variable $x_i$. 
By the above theorem, the eigenvalues of the matrices $\XX_{i}$ are the $i$th coordinates of the points $v\in V(I)$. 
Thus the task of solving a system of polynomial equations is reduced to a task of numerical linear algebra once a basis of $\QPK$ and a normal form algorithm are available, permitting the construction of the multiplication matrices $\XX_i$.

\subsection{Border bases}
The eigenvalue method for solving polynomial equations from the above section  requires
the knowledge of a basis of $\QPK$ and an algorithm to compute the normal form of
a polynomial with respect to this basis. In this section we will recall a
general method for computing such a basis and a method to reduce polynomials
to their normal form. 

Throughout $\BB\subseteq \Mon$ is a finite set of monomials.

\begin{defn}
A rewriting family $F$ for a (monomial) set $\BB$ is a set of polynomials  $F=\{f_i\}_{i\in \mathcal{I}}$ such that 
\begin{itemize}
\item $supp(f_i)\subseteq \BB^+$,
\item $f_i$ has exactly {\bf one} monomial in $\partial \BB$, denoted as $\gamma(f_i)$  and called the {\it leading monomial} of $f_i$. (The polynomial $f_i$ is
normalized so that the coefficient of $\gamma(f_i)$ is $1$.)
\item  if  $\gamma(f_i)=\gamma(f_j)$ then $i=j$.
\end{itemize}
\end{defn}

\begin{defn}
We say that the rewriting family $F$ is \emph{graded} if $\deg (\gamma(f)) =\deg
(f)$ for all $f\in F$.
\end{defn}
 
\begin{defn}\label{defnormfam}
A rewriting family $F$ for $\BB$ is said to be \emph{complete} in degree $t$ if
it is  graded and satisfies  $(\partial \BB)_t \subseteq \gamma(F)$; that is,
each monomial $ m \in \partial \BB$ of  degree at most $t$ is the leading monomial of some (necessarily unique)
 $f\in F$.
\end{defn}

\begin{defn} \label{defpiBF}
Let $F$  be  a rewriting family for $\BB$, {complete} in degree $t$.
Let $\pi_{F,\BB}$ be the projection on $\Span{\BB}$ along $F$ defined recursively on the
monomials $m \in \Mon_{t}$ in the following way:
\begin{itemize}
  \item if $m \in \BB_{t}$, then $\pi_{F,\BB}(m)=m$,
 \item if $m \in (\partial \BB)_{t} \ (= (\BB^{[1]}\setminus \BB^{[0]})_{t})$,
then  $\pi_{F,\BB}(m)=m-f$, where $f$ is the (unique) polynomial in $F$ for which
 $\gamma(f)=m$,
 \item if $m \in (\BB^{[k]}\setminus \BB^{[k-1]})_{t}$ for some integer $k\ge 2$, write $m = x_{i_{0}} m'$, where 
$m' \in \BB^{[k-1]}$ and $i_{0}\in [1,n]$ is the smallest possible variable index for which such a decomposition exists,
then 
   $\pi_{F,\BB}(m)=\pi_{F,\BB}(x_{i_{0}}\,\pi_{F,\BB}(m') )$. 
\end{itemize}
\end{defn}
If $F$ is a graded rewriting family, one can easily verify that $\deg(\piFB(m))\le \deg(m)$ for $m\in\Mon_t$. The map $\piFB$ extends by linearity to a linear map from 
$\K[\xx]_t$ onto $\Span{\BB}_t$. By construction, 
$f=\gamma(f)- \piFB(\gamma(f))$ and $\piFB(f)=0$ for all $f\in F_t$. 
The next theorems show that, under some natural commutativity condition,
the map $\piFB$ coincides with the linear projection from $\K[\xx]_t$ onto $\Span{\BB}_{t}$
along the vector space $\SpanD{F}{t}$. It leads to the notion of border basis.

\begin{defn} \label{def:borderbasis}
Let $\BB\subset \Mon$ be connected to $1$. 
A family $F\subset \K[\xx]$ is a border basis for $\BB$ if it is  a rewriting family
for $\BB$, {complete} in all degrees, and such that $\K[\xx]= \Span{\BB} \oplus (F).$
\end{defn}

An algorithmic way to check that we have a border basis is based on
the following result, that we recall from \cite{Mourrain2005}:
\begin{thm}\label{thmcom}
Assume that $\BB$ is connected to $1$ and let $F$ be a rewriting family for
 $\BB$, complete in degree $t\in \N$. Suppose that, for all $m\in \Mon_{t-2}$,
\begin{equation}\label{eqcom}
   \pi_{F,\BB}(x_i\,\pi_{F,\BB}(x_j\,m))=\pi_{F,\BB}(x_j\,\pi_{F,\BB}(x_i\,m))
\ \text{ for all } i,j\in[1,n].
\end{equation}
Then $\pi_{F,\BB}$ coincides with the linear projection of $\K[\xx]_{t}$ on $\Span{\BB}_{t}$
along the vector space $\SpanD{F}{t}$; that is,
$\K[\xx]_t=\Span{\BB}_t\oplus \SpanD{F}{t}.$
\end{thm}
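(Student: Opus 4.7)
The plan is to establish two properties of $\pi_{F,\BB}$: that it restricts to the identity on $\Span{\BB}_{t}$, and that it vanishes on $\SpanD{F}{t}$. Together with the fact that $\pi_{F,\BB}(\K[\xx]_{t}) \subseteq \Span{\BB}_{t}$ (immediate from Definition \ref{defpiBF} and the graded hypothesis on $F$), these will yield both the direct-sum decomposition $\K[\xx]_{t} = \Span{\BB}_{t} \oplus \SpanD{F}{t}$ and the identification of $\pi_{F,\BB}$ with the projection on $\Span{\BB}_{t}$ along $\SpanD{F}{t}$. The identity on $\Span{\BB}_{t}$ is immediate from the first clause of Definition \ref{defpiBF}, so the whole content lies in proving vanishing on $\SpanD{F}{t}$.

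The central tool is the following \emph{exchange lemma}, which I would prove by induction on the $\BB$-index $\delta_{\BB}(m)$: for every monomial $m$ and every variable index $i$ with $\deg(x_{i} m) \le t$,
\begin{equation*}
\pi_{F,\BB}(x_{i}\, m) \;=\; \pi_{F,\BB}\bigl(x_{i}\, \pi_{F,\BB}(m)\bigr).
\end{equation*}
The base case $m \in \BB$ is trivial since $\pi_{F,\BB}(m) = m$. For $\delta_{\BB}(m) \ge 1$, write $m = x_{i_{0}} m'$ with $i_{0}$ the smallest admissible index, so that by definition $\pi_{F,\BB}(m) = \pi_{F,\BB}(x_{i_{0}}\, \pi_{F,\BB}(m'))$ (or, when $\delta_{\BB}(m) = 1$, $\pi_{F,\BB}(m) = m - f$ for the unique $f \in F$ with $\gamma(f) = m$). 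When $i \ge i_{0}$, the smallest-index decomposition of $x_{i} m$ is again via $i_{0}$, and the inductive hypothesis applied to $x_{i} m'$ (of strictly smaller $\BB$-index) reduces both sides to the same expression. When $i < i_{0}$, the smallest index for $x_{i} m$ becomes $i$; the essential step is then to swap the roles of $x_{i}$ and $x_{i_{0}}$ in the recursive unfolding, which is precisely the content of the commutativity hypothesis \eqref{eqcom} applied to $m'$ (whose degree is at most $t-2$, so $m'$ lies in $\Mon_{t-2}$ as required). The main obstacle is the bookkeeping: every application of \eqref{eqcom} must be made on a monomial of degree $\le t-2$, and every use of the inductive hypothesis must be on a strictly smaller $\BB$-index, so the reduction ordering must be chosen carefully.

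Granted the exchange lemma, vanishing of $\pi_{F,\BB}$ on $\SpanD{F}{t}$ follows by a straightforward induction on $|\alpha|$ over the generators $x^{\alpha} f$ with $f \in F$ and $|\alpha| + \deg(f) \le t$: the base $\alpha = 0$ is built into Definition \ref{defpiBF}, since by construction $\pi_{F,\BB}(f) = \gamma(f) - f - \piFB(\gamma(f)) + \piFB(\gamma(f)) = 0$; the inductive step writes $x^{\alpha} f = x_{i}\, x^{\alpha - e_{i}} f$ and applies the exchange lemma to push $\pi_{F,\BB}$ inside, where the inner factor vanishes by induction. Surjectivity of $\pi_{F,\BB} : \K[\xx]_{t} \to \Span{\BB}_{t}$ (clear from the definition) together with this vanishing then gives $\K[\xx]_{t} = \Span{\BB}_{t} + \SpanD{F}{t}$; the intersection of the two summands is trivial because any element lying in both is simultaneously fixed and annihilated by $\pi_{F,\BB}$. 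This establishes the direct sum and identifies $\pi_{F,\BB}$ with the projection on $\Span{\BB}_{t}$ along $\SpanD{F}{t}$.
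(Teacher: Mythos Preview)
The paper does not prove this theorem here: it is stated as recalled from \cite{Mourrain2005}, so there is no in-paper argument to compare against. Your overall strategy --- establish an exchange lemma $\pi_{F,\BB}(x_i m) = \pi_{F,\BB}(x_i\,\pi_{F,\BB}(m))$, use it to show $\pi_{F,\BB}$ vanishes on $\SpanD{F}{t}$, and then read off the direct sum --- is the standard one and matches the cited reference in spirit.

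There is, however, a genuine gap in your inductive sketch of the exchange lemma. In the case $i \ge i_0$ you invoke the inductive hypothesis on $x_i m'$, claiming it has strictly smaller $\BB$-index than $m$. This need not hold: from $\delta_\BB(m') \le k-1$ one obtains only $\delta_\BB(x_i m') \le k$, and equality can occur (take $\BB = \{1, x_1, x_2\}$, $m = x_1^2$, $i_0 = 1$, $m' = x_1$, $i = 2$: then $x_i m' = x_1 x_2$ has $\delta_\BB = 1 = \delta_\BB(m)$). Likewise, there is no guarantee that the defining smallest-index decomposition of $x_i m$ goes through $i_0$, nor even that $\delta_\BB(x_i m) \ge 2$ so that the recursive clause applies. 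You correctly flag the bookkeeping as the main obstacle, but the sketch as written does not resolve it. The usual remedy (as in \cite{Mourrain2005}, \cite{m-99-nf}) is to strengthen the inductive statement so as to prove simultaneously that $\pi_{F,\BB}(m)$ is independent of which admissible decomposition $m = x_j m''$ with $\delta_\BB(m'') < \delta_\BB(m)$ one uses; hypothesis \eqref{eqcom} is precisely the local confluence needed to propagate this independence through the induction. With that well-definedness in hand, your deductions of vanishing on $\SpanD{F}{t}$ and of the direct-sum decomposition are correct as stated.
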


In order to have a simple test and effective way to test the
commutation relations \eqref{eqcom},
we introduce now the commutation polynomials.
\begin{defn}
Let $F$ be a rewriting family and $f,f'\in F$.
Let $m,m'$ be the smallest degree monomials for which 
$m\,\gamma(f)=m'\, \gamma(f')$.
Then the polynomial 
$C(f,f'):= m f-m'f' = m' \piFB(f')-m\piFB(f)$ is called the \emph{commutation  polynomial} of $f,f'$.
\end{defn}
 
\begin{defn}
For a rewriting family $F$ with respecet to $\BB$, we denote by $C^{+}(F)$ the set of polynomials
of the form $m\, f - m'\, f',$ where $f,f'\in F$ and 
\\ $m,m' \in \{0,1,x_{1},\ldots,x_{n}\}$ satisfy 
\begin{itemize}
 \item either $m\, \gamma(f)=m'\, \gamma(f')$,
 \item or  $m\, \gamma(f)\in \BB$ and $m'=0$.
\end{itemize}
\end{defn}
Therefore, $C^{+}(F) \subset \Span{\BB^{+}}$ and $C^+(F)$ contains all commutation
polynomials $C(f,f')$ for $f,f'\in F$ whose monomial multipliers $m, m'$ are
of degree $\leq 1$.
The next result can be deduced using Theorem \ref{thmcom}.

\begin{thm}\label{thmnfdegd}
Let $\BB\subset \Mon$ be connected to $1$ and let $F$ be a rewriting family for $\BB$, complete in degree $t$. 
If for all $c\in C^{+}(F)$ of degree $\le t$, $\pi_{F,\BB}(c) = 0$, 
then $\pi_{F,\BB}$  is the projection of $\K_{t}$  on $\Span{\BB}_{t}$ along
$\SpanD{F}{t}$, ie. $\K_{t}=\Span{\BB}_{t}\oplus \SpanD{F}{t}$. 
\end{thm}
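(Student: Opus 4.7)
The plan is to derive Theorem~\ref{thmnfdegd} from Theorem~\ref{thmcom}: it suffices to verify the commutation condition
\[
\piFB(x_i\,\piFB(x_j\,m))=\piFB(x_j\,\piFB(x_i\,m))
\]
for every $m\in\Mon_{t-2}$ and every $i,j\in[1,n]$, and to show that it follows from the assumption $\piFB(c)=0$ for every $c\in C^+(F)$ of degree $\le t$. I would proceed by induction on the $\BB$-index $k=\delta_\BB(m)$.

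For the base case $k=0$ (that is, $m\in\BB$), I would use Definition~\ref{defpiBF}. Since $\deg(x_im),\deg(x_jm)\le t-1$ and $F$ is complete in degree $t$, each of $x_im,x_jm$ either lies in $\BB$ or equals $\gamma(f)$ for a unique $f\in F$. If $x_im,x_jm\in\BB$, both sides equal $\piFB(x_ix_jm)$ and the identity is trivial. If $x_im=\gamma(f_i)$ and $x_jm=\gamma(f_j)$ both lie in $\partial\BB$, the identity $f=\gamma(f)-\piFB(\gamma(f))$ yields
\[
x_jf_i-x_if_j=x_i\,\piFB(x_jm)-x_j\,\piFB(x_im),
\]
and this polynomial lies in $C^+(F)$ because $x_j\gamma(f_i)=x_i\gamma(f_j)=x_ix_jm$; applying $\piFB$ and invoking the hypothesis delivers the commutation. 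The remaining mixed case (say $x_im\in\partial\BB$, $x_jm\in\BB$) is treated analogously: if $x_ix_jm\in\BB$, then $x_jf_i$ itself lies in $C^+(F)$ through the second clause of the definition; otherwise $x_ix_jm=\gamma(f'')$ for some $f''\in F$, and the polynomial $x_jf_i-f''\in C^+(F)$ combined with the elementary identity $\piFB(f'')=0$ produces the required equality.

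For the inductive step ($k\ge 1$) I would write $m=x_{i_0}m'$ with $m'\in\BB^{[k-1]}$ and $i_0$ minimal, use Definition~\ref{defpiBF} to get $\piFB(x_\ell m)=\piFB(x_\ell\,\piFB(x_{i_0}m'))$ for $\ell\in\{i,j\}$, and then apply the inductive hypothesis to each monomial $b$ of $\BB$-index $<k$ appearing in $\piFB(m')$ so as to swap $x_\ell$ and $x_{i_0}$ through $\piFB$. Linearity of $\piFB$ on $\K[\xx]_t$ then reduces the index-$k$ identity to a combination of base-case and lower-index commutations that have already been established.

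The main obstacle is the bookkeeping in this inductive step. The asymmetric choice of the smallest variable $i_0$ in Definition~\ref{defpiBF} prevents a naive swap of $x_i$ and $x_j$ through $\piFB$, so every swap must be justified by a concrete element of $C^+(F)$, and one must organize them so that every intermediate polynomial remains of degree $\le t$ (hence in the hypothesis range). The degree constraint $\deg(m)\le t-2$ is precisely what ensures this: it keeps all relevant commutation polynomials at degree $\le t$, which is why the hypothesis of Theorem~\ref{thmnfdegd} is restricted to $C^+(F)$ rather than a larger set of commutation polynomials.
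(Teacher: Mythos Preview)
Your strategy coincides with the paper's (which states only that the result ``can be deduced using Theorem~\ref{thmcom}''): verify hypothesis~\eqref{eqcom} from the $C^{+}(F)$-assumption. Your base case $m\in\BB$ is complete and correct, and it is indeed the heart of the argument.

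The inductive step, however, contains a genuine gap. You claim that Definition~\ref{defpiBF} yields $\piFB(x_\ell m)=\piFB\!\big(x_\ell\,\piFB(x_{i_0}m')\big)$; since $x_{i_0}m'=m$, this is the assertion $\piFB(x_\ell m)=\piFB\!\big(x_\ell\,\piFB(m)\big)$, and that is \emph{not} what the definition says: Definition~\ref{defpiBF} computes $\piFB(x_\ell m)$ by peeling off the smallest-index variable of the monomial $x_\ell m$ (which need be neither $x_\ell$ nor $x_{i_0}$) and recursing on the cofactor; it does not permit reducing $m$ first and then multiplying by $x_\ell$. The identity you are invoking is in fact equivalent to $\piFB$ vanishing on $x_\ell\cdot\SpanD{F}{\deg m}$, which is part of the very conclusion you are proving, so appealing to it here is circular. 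The standard repair is to introduce the operators $N_\ell(p):=\piFB(x_\ell\,p)$ on $\Span{\BB}$, observe that the recursive definition always gives $\piFB(m)=N_{i_1}\cdots N_{i_d}(1)$ for \emph{some} ordering of the variables of $m$, and note that your base case is precisely the relation $N_iN_j=N_jN_i$ on $\Span{\BB}_{t-2}$. Transposing adjacent factors then shows $\piFB(m)$ is independent of the ordering for every $m\in\Mon_t$, whence $\piFB(x_\ell m)=N_\ell(\piFB(m))$ holds for the right reason and~\eqref{eqcom} for general $m\in\Mon_{t-2}$ follows without any further induction on $\delta_\BB(m)$.
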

 
If such a property is satisfied we say that $F$ is a border basis for
$B$ in degree $\leq t$.

\begin{thm}[border basis, \cite{Mourrain2005}]\label{thmnfanyt} 
Let $\BB\subset \Mon$ be connected to 1 and let $F$ be a rewriting family for $\BB$, complete in any degree. Assume that $\pi_{F,\BB}(c) = 0$ for all 
$c \in C^{+}(F)$.Then $\BB$ is a basis of $\K[\xx]/(F)$, $\K=\Span{\BB} \oplus (F)$, and $(F)_{t}=\SpanD{F}{t}$ for all $t\in \N$; the set $F$ is 
a \emph{border basis} of the ideal $I=(F)$ with respect to $\BB$.
\end{thm}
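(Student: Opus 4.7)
The plan is to deduce this global statement by passing to the limit in the degree from the truncated version, Theorem \ref{thmnfdegd}. Since $F$ is complete in every degree and every commutation polynomial in $C^{+}(F)$ reduces to zero under $\piFB$, the hypotheses of Theorem \ref{thmnfdegd} are satisfied at every truncation degree $t$. I would therefore first apply that theorem to obtain, for every $t\in\N$, the decomposition $\K[\xx]_{t} = \Span{\BB}_{t}\oplus \SpanD{F}{t}$, with $\piFB$ realized as the linear projection onto $\Span{\BB}_{t}$ along $\SpanD{F}{t}$.

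The second step is to patch these decompositions together. The recursion defining $\piFB(m)$ in Definition \ref{defpiBF} depends only on the monomial $m$ (and on $F$, $\BB$), not on the ambient truncation degree; so for $t\le t'$, the degree-$t$ projection is the restriction of the degree-$t'$ one. Taking unions of the truncated decompositions then gives $\K[\xx] = \Span{\BB}\oplus U$, where $U := \bigcup_{t}\SpanD{F}{t}$. I then identify $U$ with the ideal $(F)$: the inclusion $U\subseteq (F)$ is immediate from the definition of $\SpanD{F}{t}$; conversely, any $g = \sum_{i} p_{i} f_{i}\in (F)$ lies in $\SpanD{F}{t}$ as soon as $t\ge \max_{i}\deg(p_{i} f_{i})$. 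This establishes both $\K[\xx] = \Span{\BB}\oplus (F)$ and the fact that (the image of) $\BB$ is a basis of $\K[\xx]/(F)$.

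The last equality $(F)_{t} = \SpanD{F}{t}$ then follows easily. The inclusion $\supseteq$ is trivial. For $\subseteq$, take $g\in (F)_{t}$ and use the truncated decomposition to write $g = b + f$ with $b\in\Span{\BB}_{t}$ and $f\in\SpanD{F}{t}$; since $f\in (F)$, we get $b = g - f \in (F)\cap \Span{\BB}$, and this intersection is $\{0\}$ by the global direct sum just established, so $g = f\in \SpanD{F}{t}$. The conclusion that $F$ is a border basis for $\BB$ is then exactly Definition \ref{def:borderbasis}.

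The only delicate point is the compatibility claim in the second step: one must verify that the recursive prescription of Definition \ref{defpiBF} really is stable under enlarging the ambient degree, so that the truncated projections $\piFB|_{\K[\xx]_{t}}$ consistently patch to a well-defined linear map on all of $\K[\xx]$. This is essentially a bookkeeping exercise on the recursion, and once it is settled the rest of the argument is a straightforward unwinding of the definitions.
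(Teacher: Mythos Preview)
The paper does not actually supply a proof of Theorem~\ref{thmnfanyt}; it is recalled from \cite{Mourrain2005} and stated without argument. Your derivation---applying Theorem~\ref{thmnfdegd} at every degree $t$, checking that the recursively defined maps $\piFB$ are compatible across truncation levels, and then taking the union---is the natural way to obtain the global statement from the truncated one, and it is correct as written. This is exactly the deduction the paper's organization invites (Theorem~\ref{thmnfanyt} is placed immediately after Theorem~\ref{thmnfdegd}), so in spirit your argument matches what the authors intend the reader to see, even though they do not spell it out.
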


This implies the following characterization of border bases using 
the commutation property.
\begin{cor}[border basis, \cite{m-99-nf}]\label{thmcomanyt} 
Let $\BB\subset \Mon$ be connected to 1 and let $F$ be a rewriting family for $\BB$, complete in any degree.
If for all $m\in \BB$ and all indices $i,j\in[1,n]$, we have:
\begin{equation*}
   \pi_{F,\BB}(x_i\,\pi_{F,\BB}(x_j\,m))=\pi_{F,\BB}(x_j\,\pi_{F,\BB}(x_i\,m)),
\end{equation*}
then $\BB$ is a basis of $\K/(F)$, $\K=\Span{\BB} \oplus (F)$, and
$(F)_{t}=\SpanD{F}{t}$ for all $t  \in \N$.
\end{cor}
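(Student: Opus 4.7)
The plan is to deduce the Corollary from Theorem \ref{thmnfanyt}: it suffices to show that the commutation identity restricted to monomials in $\BB$ already forces $\piFB(c)=0$ for every $c\in C^{+}(F)$, and then invoke that theorem verbatim.

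I would start from the identity $f=\gamma(f)-\piFB(\gamma(f))$, which holds for every $f\in F$ by construction and satisfies $\piFB(\gamma(f))\in\Span{\BB}$ (so that $\piFB(f)=0$). Since $\BB$ is connected to $1$ and $\gamma(f)\in\partial\BB$, I can write $\gamma(f)=x_{j}b$ with $b\in\BB$, and likewise $\gamma(f')=x_{i}b'$ with $b'\in\BB$. For a commutation polynomial $c=mf-m'f'\in C^{+}(F)$ of the first kind, the relation $m\gamma(f)=m'\gamma(f')$ combined with the identity above yields
\begin{equation*}
c \;=\; m'\,\piFB(\gamma(f')) \;-\; m\,\piFB(\gamma(f)).
\end{equation*}
Applying $\piFB$ and expanding by the recursive rule of Definition \ref{defpiBF}, the right-hand side reduces to an expression of the form $\piFB(x_{i}\,\piFB(x_{j}b))-\piFB(x_{j}\,\piFB(x_{i}b'))$, which is precisely the kind of quantity controlled by the commutation hypothesis on $\BB$. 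The second kind of element of $C^{+}(F)$, namely $c=mf$ with $m\gamma(f)\in\BB$, is handled analogously: $\piFB(mf)=m\gamma(f)-\piFB(m\,\piFB(\gamma(f)))$, and the commutation identity applied to $b=\gamma(f)/x_{j}\in\BB$ should collapse the last term to $m\gamma(f)$.

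The main obstacle is the bookkeeping in these reductions. Unfolding $\piFB(m\,\piFB(\gamma(f)))$ produces monomials whose $\BB$-index $\delta_{\BB}$ may exceed $1$, and the third clause of Definition \ref{defpiBF} then requires further recursive evaluations of $\piFB$, each generating new commutation identities that must be validated. I would therefore organise the argument as a double induction, lexicographically on the pair (total degree, $\BB$-index), propagating the commutation property from $\BB$ to all monomials encountered during the recursion. In effect, this is the step which upgrades the commutation hypothesis on $\BB$ to the hypothesis of Theorem \ref{thmcom} (commutation on every $m\in\Mon_{t-2}$), or equivalently to $\piFB(c)=0$ for all $c\in C^{+}(F)$.

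Once the induction is complete, Theorem \ref{thmnfanyt} applies directly and gives all three conclusions: $\BB$ is a basis of $\K[\xx]/(F)$, the direct-sum decomposition $\K[\xx]=\Span{\BB}\oplus(F)$ holds, and $(F)_{t}=\SpanD{F}{t}$ for every $t\in\N$.
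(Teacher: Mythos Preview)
The paper does not supply its own proof of this corollary; it is stated with a citation to \cite{m-99-nf} and introduced only as a consequence of Theorem~\ref{thmnfanyt}. Your overall plan---extend the commutation identity from $m\in\BB$ to arbitrary monomials by induction on the $\BB$-index, and then invoke Theorem~\ref{thmcom} (hence Theorem~\ref{thmnfanyt})---is the correct strategy and is essentially the argument in the cited source.

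Two points in your sketch deserve correction, though neither invalidates the strategy. First, the expression $\piFB(x_{i}\,\piFB(x_{j}b))-\piFB(x_{j}\,\piFB(x_{i}b'))$ with possibly $b\neq b'$ is \emph{not} directly controlled by the hypothesis, which only equates $\piFB(x_{i}\,\piFB(x_{j}m))$ with $\piFB(x_{j}\,\piFB(x_{i}m))$ for a \emph{single} $m\in\BB$. The clean case is when $\mu:=\gamma(f)/m'=\gamma(f')/m$ lies in $\BB$: then one may take $b=b'=\mu$ and the hypothesis applies at once. But since $\BB$ is only connected to~$1$ and not closed under division, $\mu\notin\BB$ is possible, and reducing to a common base monomial is precisely what your induction must supply---so the obstacle is real, but it sits one line earlier than your write-up suggests. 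Second, your diagnosis that ``unfolding $\piFB(m\,\piFB(\gamma(f)))$ produces monomials whose $\BB$-index may exceed~$1$'' mislocates the difficulty: since $\piFB(\gamma(f))\in\Span{\BB}$ and $m$ is a single variable, every monomial in $m\,\piFB(\gamma(f))$ lies in $\BB^{+}$ and has index at most~$1$, so the third clause of Definition~\ref{defpiBF} is never triggered there. Monomials of higher $\BB$-index appear not in this unfolding but when you attempt to verify the commutation identity for a general $m\in\Mon\setminus\BB$, which is where your double induction actually runs.
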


\subsection{Hankel operators and positive linear forms}
This section is based on \cite{lasserre:hal-00651759}.
\begin{defn}
 For $\Lambda \in \R[\xx]^*$, the Hankel operator $H_{\Lambda}$ is the operator from $\R[\xx]$ to $\R[\xx]^*$ defined by
 \begin{equation}
  H_{\Lambda} : p \in \R[\xx] \mapsto p\cdot \Lambda \in \R[\xx]^*
 \end{equation}
\end{defn}

\begin{defn}
 We define the kernel of the Hankel operator:
 \begin{equation}
  \ker H_{\Lambda}=\{p \in \R[\xx] \mid p \cdot \Lambda =0, i.e, \ \Lambda(pq)=0 \ \forall q \in \R[\xx] \}
 \end{equation}
\end{defn}

To analyse the properties of $\Lambda$, we study the quotient algebra
$\K[\xx]/\ker H_\Lambda=\Ac_{\Lambda}$. A first result is the following
(see e.g. \cite{lasserre:hal-00651759}):
\begin{lem}\label{lem:basis}
The rank of the operator $H_{\Lambda}$ is finite if and only if $\ker H_\Lambda$ is a zero-dimensional ideal, in which case $\dim \K[\xx]/\ker H_\Lambda=rank H_{\Lambda}$.
\end{lem}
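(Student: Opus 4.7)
The plan is to reduce the statement to two standard facts: the first isomorphism theorem for linear maps, and the equivalence (implicit in Section 2.2) between zero-dimensionality of an ideal $I\subseteq \R[\xx]$ and finite-dimensionality of the quotient $\R[\xx]/I$. Once we know that $\ker H_\Lambda$ is actually an ideal, the rank of $H_\Lambda$ is literally the dimension of $\R[\xx]/\ker H_\Lambda$, and both directions of the biconditional, along with the dimension equality, follow immediately.

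The first thing I would do is verify that $\ker H_\Lambda$ is an ideal of $\R[\xx]$, not merely a linear subspace. If $p\in \ker H_\Lambda$, then by definition $\Lambda(pq)=0$ for every $q\in\R[\xx]$; for any $r\in\R[\xx]$ we then have $\Lambda((rp)q)=\Lambda(p\cdot(rq))=0$ for all $q$, so $rp\in\ker H_\Lambda$. This is what makes it legitimate to speak of $\ker H_\Lambda$ as a zero-dimensional ideal. Next I would apply the first isomorphism theorem to the $\R$-linear map $H_\Lambda:\R[\xx]\to\R[\xx]^*$, giving a canonical linear isomorphism
\[\R[\xx]/\ker H_\Lambda \;\simeq\; \im H_\Lambda,\]
and hence $\dim_\R \R[\xx]/\ker H_\Lambda = \rank H_\Lambda$. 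This is already the dimension formula asserted in the ``in which case'' clause of the lemma.

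For the biconditional, I would invoke the classical fact that an ideal $I\subseteq\R[\xx]$ is zero-dimensional precisely when $\R[\xx]/I$ is a finite-dimensional $\R$-vector space (a statement used implicitly in Section 2.2 when the authors assert $|V_{\overline{\K}}(I)|\le \dim_\K \R[\xx]/I$). Combined with the equality from the previous paragraph, this yields both directions simultaneously: $\rank H_\Lambda$ is finite iff $\dim \R[\xx]/\ker H_\Lambda$ is finite iff $\ker H_\Lambda$ is zero-dimensional. The only subtlety worth flagging is the degenerate case $\Lambda=0$, where $\ker H_\Lambda=\R[\xx]$ has empty variety and so is not zero-dimensional in the paper's strict sense ($0<|V_{\overline{\K}}(I)|<\infty$); this case, in which $\rank H_\Lambda=0$ is still finite, should be either excluded by an implicit hypothesis $\Lambda\ne 0$ or treated as a trivial separate instance. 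That bookkeeping about the empty-variety case is the only step that is not entirely mechanical.
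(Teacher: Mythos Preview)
Your argument is correct and is the standard one. Note, however, that the paper does not actually supply a proof of this lemma: it simply states the result with a pointer to \cite{lasserre:hal-00651759}. So there is no ``paper's own proof'' to compare against here; your write-up fills in exactly the kind of routine verification the authors chose to omit.

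Your observation about the degenerate case $\Lambda=0$ is well taken and worth keeping: under the paper's convention that zero-dimensional means $0<|V_{\overline{\K}}(I)|<\infty$, the case $\ker H_\Lambda=\R[\xx]$ is formally excluded, and the lemma should be read with the implicit hypothesis $\Lambda\neq 0$ (equivalently $1\notin\ker H_\Lambda$). In the paper's applications $\Lambda(1)=1$ is always imposed, so this edge case never arises.
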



For a zero-dimensional ideal $I\subset \K[\xx]$ with simple zeros $V(I)= \{\zeta_1,\ldots,\zeta_r\} \subset \K^n$, we have
$I^{\bot} = \Span{ \tmmathbf{1}_{\zeta_1},\ldots,\tmmathbf{1}_{\zeta_r}}$ and the ideal $I$ is radical as a consequence of Hilbert's Nullstellensatz.
When $I=\ker H_{\Lambda}$, this yields the following property. 
\begin{prop}\label{propradideal}
Let $\K=\C$ and assume that $rank H_{\Lambda} = r < \infty$. Then, the ideal $\ker H_\Lambda$ is radical if and only if
\begin{equation}\label{eqlambdarad}
\Lambda=\sum_{i=1}^r \lambda_i\tmmathbf{1}_{ \zeta_i} \ \text{ with } \lambda_i\in \K-\{0\} \ \text{ and } \zeta_i\in \K^n \text{ pairwise distinct},
\end{equation}
in which case $\ker H_\Lambda=I(\zeta_1,\ldots,\zeta_r)$ is the
vanishing ideal of the points $\zeta_1, \ldots, \zeta_{r}$.
\end{prop}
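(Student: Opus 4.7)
The plan is to handle the two implications separately, with the same pair of tools in each direction: Lemma \ref{lem:basis} to control dimensions, and the description $I^{\perp} = \Span{\tmmathbf{1}_{\zeta_{1}},\ldots,\tmmathbf{1}_{\zeta_{r}}}$ valid for a radical zero-dimensional ideal with variety $\{\zeta_{1},\ldots,\zeta_{r}\}$, recalled just above the proposition. The one key observation I would use repeatedly is that, because $\ker H_{\Lambda}$ is an ideal, for any $q \in \ker H_{\Lambda}$ one has $\Lambda(q) = \Lambda(q\cdot 1) = 0$, so $\Lambda$ itself automatically lies in $(\ker H_{\Lambda})^{\perp}$.

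For the easy direction $(\Leftarrow)$, starting from $\Lambda = \sum_{i=1}^{r} \lambda_{i}\tmmathbf{1}_{\zeta_{i}}$ with pairwise distinct $\zeta_{i}$ and nonzero $\lambda_{i}$, any polynomial $p$ vanishing at all $\zeta_{i}$ satisfies $\Lambda(pq) = \sum_{i} \lambda_{i} p(\zeta_{i}) q(\zeta_{i}) = 0$ for every $q$, so $I(\{\zeta_{1},\ldots,\zeta_{r}\}) \subseteq \ker H_{\Lambda}$. The vanishing ideal of $r$ distinct points is radical with $r$-dimensional quotient, and Lemma \ref{lem:basis} gives $\dim_{\K}\K[\xx]/\ker H_{\Lambda} = \rank H_{\Lambda} = r$, so the inclusion is an equality. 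In particular $\ker H_{\Lambda} = I(\{\zeta_{1},\ldots,\zeta_{r}\})$ is radical.

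For $(\Rightarrow)$, assume $I := \ker H_{\Lambda}$ is radical. Lemma \ref{lem:basis} says $I$ is zero-dimensional with $\dim_{\K}\K[\xx]/I = r$; the recalled fact that equality $|V_{\clK}(I)| = \dim_{\K}\K[\xx]/I$ characterizes radicality then forces $|V_{\C}(I)| = r$. Listing $V_{\C}(I) = \{\zeta_{1},\ldots,\zeta_{r}\}$ and invoking the characterization of $I^{\perp}$ from the paragraph preceding the proposition, the key observation from the first paragraph gives $\Lambda \in I^{\perp} = \Span{\tmmathbf{1}_{\zeta_{1}},\ldots,\tmmathbf{1}_{\zeta_{r}}}$, hence $\Lambda = \sum_{i=1}^{r} \lambda_{i}\tmmathbf{1}_{\zeta_{i}}$ for some $\lambda_{i} \in \K$. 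The one step I expect to require a little more care, though still short, is showing each $\lambda_{i} \neq 0$: if $\lambda_{i_{0}} = 0$, then the computation used in $(\Leftarrow)$ applied to the shorter sum shows $I(\{\zeta_{j} : j \neq i_{0}\}) \subseteq \ker H_{\Lambda} = I$, which contradicts the resulting dimension inequality $r - 1 = \dim_{\K}\K[\xx]/I(\{\zeta_{j} : j \neq i_{0}\}) \geq \dim_{\K}\K[\xx]/I = r$. This closes the equivalence and identifies $\ker H_{\Lambda}$ with the vanishing ideal of the $\zeta_{i}$ in the radical case.
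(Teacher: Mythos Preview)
Your proof is correct, and it follows precisely the line the paper suggests. The paper does not give a detailed proof of this proposition: it only records, in the sentence immediately preceding the statement, the key fact $I^{\perp}=\Span{\tmmathbf{1}_{\zeta_1},\ldots,\tmmathbf{1}_{\zeta_r}}$ for a radical zero-dimensional $I$ with $V(I)=\{\zeta_1,\ldots,\zeta_r\}$, and states that this ``yields'' the proposition. Your argument is the natural unpacking of that hint, combining it with Lemma~\ref{lem:basis} exactly as intended; the dimension-count contradiction you use to force each $\lambda_i\neq 0$ is the standard way to finish and is fine.
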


As a corollary, we deduce that when $\K = \R$ and $rank H_\Lambda
=r<\infty$, the ideal $\ker H_\Lambda$ is real radical if and only if
the points $\zeta_i$ are in $\R^n$.


\begin{defn}
  We say that $\Lambda \in \R[\xx]^*$ is positive, which we denote $\Lambda \succcurlyeq 0$, if $\Lambda (p^2) \ge 0$ for all
  $p\in \R[\xx]$. We have that $\Lambda\succcurlyeq 0$ iff
  $H_{\Lambda} \succcurlyeq 0$. If moreover $\Lambda (1)=1$, we say
  that $\Lambda$ is a probability measure.
\end{defn}
This term is justified by a theorem of Riesz-Haviland \cite{rudin-functional}, which
states that a Lebesgue measure on $\R^{n}$ is uniquely determined by its value
on the polynomials $\in \R[\xx]$.
In particular, if $\Lambda\succcurlyeq 0$ and
$\Lambda (1)=1$, there exists a unique probability measure $\mu$ such
that $\forall p\in \R[\xx], \Lambda (p) = \int p\, d\mu$.

An important property of positive forms is the following:


\begin{prop}\label{proppositive}
Assume $rank H_{\Lambda} = r < \infty$. Then $\Lambda \succcurlyeq 0$ if and only if $\Lambda$ has a decomposition
(\ref{eqlambdarad}) with $\lambda_i >0$ and distinct $\zeta_i\in
\R^n$, in which case $V(\ker H_{\Lambda}) =\{\zeta_1, \ldots,
\zeta_r\}\subset \R^n$.
\end{prop}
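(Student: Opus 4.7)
Proof plan.

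The ``if'' direction is routine: if $\Lambda=\sum_{i=1}^{r}\lambda_{i}\mathbf{1}_{\zeta_{i}}$ with $\lambda_{i}>0$ and $\zeta_{i}\in \mathbb{R}^{n}$, then for any $p\in \mathbb{R}[\mathbf{x}]$ one has $\Lambda(p^{2})=\sum_{i}\lambda_{i}p(\zeta_{i})^{2}\ge 0$, so $\Lambda\succcurlyeq 0$. (The claim $V(\ker H_{\Lambda})=\{\zeta_{1},\dots ,\zeta_{r}\}$ follows from Proposition \ref{propradideal}.)

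For the ``only if'' direction, the strategy is to turn $H_{\Lambda}$ into an inner product and then simultaneously diagonalize the multiplication operators. Consider the bilinear form $B(p,q):=\Lambda(pq)$ on $\mathbb{R}[\mathbf{x}]$. Positivity of $\Lambda$ means $B$ is positive semidefinite, and by the Cauchy--Schwarz inequality $|\Lambda(pq)|^{2}\le \Lambda(p^{2})\Lambda(q^{2})$ one gets $\ker H_{\Lambda}=\{p:B(p,p)=0\}$. Hence $B$ descends to a positive definite inner product on the quotient $\mathcal{A}_{\Lambda}=\mathbb{R}[\mathbf{x}]/\ker H_{\Lambda}$, which by Lemma \ref{lem:basis} is an $r$-dimensional $\mathbb{R}$-algebra.

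Next, observe that every multiplication operator $\mathcal{M}_{x_{i}}$ on $\mathcal{A}_{\Lambda}$ is $B$-symmetric, since $B(x_{i}p,q)=\Lambda(x_{i}pq)=B(p,x_{i}q)$. Being a commuting family of self-adjoint operators on a finite-dimensional real Euclidean space, the $\mathcal{M}_{x_{i}}$ are simultaneously diagonalizable over $\mathbb{R}$. Pick a $B$-orthonormal basis $e_{1},\dots ,e_{r}$ of common eigenvectors and set $\zeta_{k}^{i}$ to be the eigenvalue of $\mathcal{M}_{x_{i}}$ on $e_{k}$; then $\zeta_{k}:=(\zeta_{k}^{1},\dots ,\zeta_{k}^{n})\in \mathbb{R}^{n}$. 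By Theorem \ref{thm::muloperator} the set of common eigenvalues recovers the variety, so $V(\ker H_{\Lambda})=\{\zeta_{1},\dots ,\zeta_{r}\}\subset \mathbb{R}^{n}$. Since $|V(\ker H_{\Lambda})|=r=\dim \mathcal{A}_{\Lambda}$, the ideal $\ker H_{\Lambda}$ is radical, and Proposition \ref{propradideal} yields a decomposition $\Lambda=\sum_{i=1}^{r}\lambda_{i}\mathbf{1}_{\zeta_{i}}$ with distinct real points $\zeta_{i}$ and non-zero real weights $\lambda_{i}$.

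It remains to show $\lambda_{i}>0$. Using Lagrange-interpolating polynomials $L_{i}\in \mathbb{R}[\mathbf{x}]$ satisfying $L_{i}(\zeta_{j})=\delta_{ij}$, one computes $\Lambda(L_{i}^{2})=\lambda_{i}$; positivity gives $\lambda_{i}\ge 0$, and strict positivity follows because $\lambda_{i}=0$ for some $i$ would mean $\Lambda$ is supported on at most $r-1$ points, forcing $\operatorname{rank} H_{\Lambda}<r$ by Proposition \ref{propradideal}, contradiction. The main technical point is Step (simultaneous $B$-orthogonal diagonalization of the $\mathcal{M}_{x_{i}}$); once positive-definiteness of $B$ on the quotient is established via Cauchy--Schwarz, the rest is a direct application of the real spectral theorem and the eigenvalue theorem for multiplication operators.
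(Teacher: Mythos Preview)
The paper does not actually supply a proof of this proposition; it is stated as a known result taken from \cite{lasserre:hal-00651759}, so there is no in-paper argument to compare against. Your proof is essentially correct and self-contained: the key insight---that positivity of $\Lambda$ makes the multiplication operators $\mathcal{M}_{x_i}$ self-adjoint for the induced inner product on $\mathcal{A}_\Lambda$, hence simultaneously diagonalizable over $\R$ by the real spectral theorem---is exactly the right mechanism, and the remaining steps (invoking Theorem~\ref{thm::muloperator} and Proposition~\ref{propradideal}, then testing on interpolating polynomials to get $\lambda_i>0$) are sound.

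One small gap worth patching: after choosing a $B$-orthonormal eigenbasis $e_1,\dots,e_r$ you write $V(\ker H_\Lambda)=\{\zeta_1,\dots,\zeta_r\}$ and then assert $|V(\ker H_\Lambda)|=r$, but a priori several of the eigenvalue tuples $\zeta_k$ could coincide. The fix is immediate: since the $\mathcal{M}_{x_i}$ are simultaneously diagonal in this basis and generate $\mathcal{A}_\Lambda$ as an $\R$-algebra, the regular representation embeds $\mathcal{A}_\Lambda$ into the algebra of diagonal $r\times r$ matrices; hence $\mathcal{A}_\Lambda$ is reduced, $\ker H_\Lambda$ is radical, and $|V(\ker H_\Lambda)|=\dim\mathcal{A}_\Lambda=r$ forces the $\zeta_k$ to be pairwise distinct. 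With this remark in place your argument is complete.
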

In particular, it shows that if  $\Lambda \succcurlyeq 0$, then $\ker H_{\Lambda}$ is a real radical ideal.



\section{Main Results}
In this section, we give the main results which shows how the minimum
of $f$ and the ideal defining the points where this minimum is reached,
can be computed. 

Hereafter, we will assume that the minimum $f^{*}$ of $f$ is reached at a
point $\xx^{*}\in \R^{n}$.
\begin{defn}
 We define the gradient ideal of $f(x)$:
 \begin{equation}
  I_{grad}(f)=(\nabla f(x))=(\frac{\partial f}{\partial x_1},..,\frac{\partial f}{\partial x_n}).
 \end{equation}
\end{defn}

\begin{defn}
 We define the minimizer ideal of f(x):
 \begin{equation}
  I_{min}(f)= I(\xx^* \in \R^{n} \ s.t. \ f(\xx^*) \ is \ minimum).
 \end{equation}
\end{defn}
By construction, $I_{grad} (f)\subset I_{min} (f)$ and $I_{min}
(f)\neq (1)$ if the minimum $f^{*}$ is reached in $\R^{n}$.
The objective of this section is to describe a method to compute
generators of $I_{min} (f)$ from generators of $I_{grad} (f)$.
For that purpose, first of all we need 
to restrict our analysis to matrices of finite size. For this reason,
we consider here truncated Hankel operators, which will play a central role in the construction of the minimizer ideal of $f$.
\begin{defn}
 For a vector space $E\subset \R[\xx]$, let $E\cdot E:=\{p\,q\mid p,q\in E\}$. 
For a linear form $\Lambda \in \Span{E \cdot E}^{*}$, we define the map $H_{\Lambda}^E : E \rightarrow
 E^{\ast}$ by $H_{\Lambda}^E(p) = p \cdot \Lambda$ for $p\in E$.  Thus
 $H_{\Lambda}^E$ is called a truncated Hankel operator, defined on the subspace $E$.
\end{defn}

\begin{defn}
 We define the kernel of the truncated Hankel operator:
 \begin{equation}
  \ker H_{\Lambda}^E=\{p \in E \mid p \cdot \Lambda =0, i.e, \ \Lambda(pq)=0 \ \forall q \in E \}.
 \end{equation}
\end{defn}
When $E=\R[\xx]_t$ for $t\in \N$, $H_{\Lambda}^E$ is also denoted $H_{\Lambda}^t$.

Given a subspace $E_0\subset E$, $\Lambda$ induces a linear map on $\Span{E_0\cdot E_0}$ and we can consider
the induced truncated Hankel operator $H^{E_0}_\Lambda: E_0
\rightarrow (E_0)^*$ as a restriction of $H^{E}_{\Lambda}$.

Below we will deal with the following sets and minimums, in order to define our primal-dual problems. 

\begin{defn}
 Given a vector space $E \subset \R[\xx]$ and $G \subset \langle E
 \cdot E \rangle$, we define 
 \begin{equation}
  \Lc_{G,E,\succcurlyeq}:=\{\Lambda \in \langle E \cdot E \rangle^* \mid \Lambda \perp G, \ \Lambda(1)=1, \ \Lambda(p^2) \ge 0 \ \forall p \in E \}. \label{eq1}
 \end{equation}
 If $E=\R[\xx]_t$ and $G'=\SpanD{G}{2t}$, we also denote $\mathcal{L}_{G',E,\succcurlyeq}$ by $\mathcal{L}_{G,t,\succcurlyeq}$.
\end{defn}
Notice that if $G \subset G'$ and $E \subset E'$ then
$\mathcal{L}_{G',E',\succcurlyeq} \subset
\mathcal{L}_{G,E,\succcurlyeq}$.   When $E$ and $G$ are vector spaces
of finite dimension, $\Lc_{G,E, \succcurlyeq}$ is the intersection of the closed
convex cone of semi-definite positive quadratic forms on $E\times E$ with a linear space,
thus it is a convex closed semi-algebraic set. More details on its
description will be given in Section \ref{sec-0dim}.
We will need the following result:

\begin{lem}\label{positive}
For any vector space $E$ and $G=\{0\}$ and $\Lambda,\Lambda' \in
\Lc_{G,E,\succcurlyeq}$,  we have:
\begin{itemize}
 \item $\forall p\in E$, $\Lambda(p^2) = 0$ implies $p \in \ker H_{\Lambda}^{E}$.
 \item $\ker H_{\Lambda+\Lambda'}^{E}=\ker H_\Lambda^{E} \cap \ker H_{\Lambda'}^{E}$. 
\end{itemize}
\end{lem}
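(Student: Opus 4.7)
The plan is to derive both items from the polarization-style inequality $\Lambda((p+tq)^{2})\ge 0$, which is the only tool we really have once $G=\{0\}$ makes the orthogonality condition vacuous and leaves positivity plus normalization in play.

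For the first item, I would fix $p\in E$ with $\Lambda(p^{2})=0$ and an arbitrary $q\in E$. Because $(p+tq)^{2}\in \langle E\cdot E\rangle$ for every $t\in\R$, positivity of $\Lambda$ gives
\begin{equation*}
0\le \Lambda\bigl((p+tq)^{2}\bigr)=\Lambda(p^{2})+2t\,\Lambda(pq)+t^{2}\Lambda(q^{2})=2t\,\Lambda(pq)+t^{2}\Lambda(q^{2}).
\end{equation*}
If $\Lambda(pq)\ne 0$, choosing $t$ small and of opposite sign to $\Lambda(pq)$ produces a negative value, a contradiction. Hence $\Lambda(pq)=0$ for every $q\in E$, i.e.\ $p\in\ker H_{\Lambda}^{E}$. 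This is really a Cauchy--Schwarz argument for the (possibly degenerate) inner product induced by $\Lambda$, and it is the heart of the lemma.

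For the second item, the inclusion $\ker H_{\Lambda}^{E}\cap\ker H_{\Lambda'}^{E}\subseteq \ker H_{\Lambda+\Lambda'}^{E}$ is immediate from linearity of the pairing. For the reverse inclusion, take $p\in\ker H_{\Lambda+\Lambda'}^{E}$ and test against $p$ itself: $\Lambda(p^{2})+\Lambda'(p^{2})=(\Lambda+\Lambda')(p\cdot p)=0$. Since $\Lambda,\Lambda'\in\Lc_{\{0\},E,\succcurlyeq}$ are both positive, each summand is $\ge 0$, so both vanish individually. Applying the first item to $\Lambda$ and to $\Lambda'$ separately then yields $p\in\ker H_{\Lambda}^{E}\cap \ker H_{\Lambda'}^{E}$.

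There is no real obstacle here; the only things to check are that the relevant products $(p+tq)^{2}$, $pq$, $p^{2}$ all lie in $\langle E\cdot E\rangle$ so that $\Lambda$ can be evaluated on them (which is built into the definition of $\Lc_{G,E,\succcurlyeq}$), and that the normalization $\Lambda(1)=1$ is never used — it isn't, so the proof works on the broader cone of positive linear forms.
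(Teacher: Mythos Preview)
Your proof is correct and follows essentially the same route as the paper: both use the polarization inequality $\Lambda((p+tq)^2)\ge 0$ with $\Lambda(p^2)=0$ to force $\Lambda(pq)=0$, and both derive the reverse inclusion in the second item by testing against $p$ itself and splitting the resulting zero sum using positivity. Your remark that the normalization $\Lambda(1)=1$ is never used is a nice observation, and it implicitly justifies applying the first item to $\Lambda+\Lambda'$ (which is positive but no longer normalized).
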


\begin{proof} The first point is a consequence 
of the positivity of $H_\Lambda^{E}$.

For the second point, $\forall p, q\in E$, $\forall t\in\R$, $\Lambda((p+tq)^2) = t^2 \Lambda(q^2) +
2t \Lambda(p\,q) \ge 0$. Dividing by $t$ and letting $t$ go to zero
yields $\Lambda (p\,q)=0$, thus showing $p\in\ker H_\Lambda^{E}$.
The inclusion $\ker H_\Lambda^{E} \cap \ker H_{\Lambda'}^{E} \subset \ker H_{\Lambda+\Lambda'}^{E}$ is obvious.

Conversely, let $p\in \ker H_{\Lambda+\Lambda'}$. In particular,
$(\Lambda+\Lambda')(p^2)=0$, which implies $\Lambda(p^2)=\Lambda'(p^2)=0$ (since 
$\Lambda(p^2),\Lambda'(p^2)\ge 0$) and thus $p\in \ker H_\Lambda \cap \ker H_{\Lambda'}$.
\end{proof}

\begin{defn}
 Given a vector space $E \subset \R[\xx]$ and $G \subset \langle E
 \cdot E \rangle$, we define 
 \begin{equation}
  \Sc_{G,E}:=\{\ p \in \R[\xx] \mid p= \sum_{i=1}^s h_i^2 + h, \ h_i \in E, \ h \in G  \} \label{eq2}.
 \end{equation}
 If $E=\R[\xx]_t$ and $G'=\SpanD{G}{2t}$, we also denote $\Sc_{G',E}$ by $\Sc_{G,t}$.
\end{defn}
Notice that if $G \subset G'$ and $E \subset E'$ then $\Sc_{G,E} \subset \Sc_{G',E'}$.
When $E$ and $G$ are vector spaces of finite dimension, $\Sc_{G,E}$
 is the projection of the sum of a linear space and the 
convex cone of positive quadratic forms on $\dual{E}\times \dual{E}$.

\begin{defn}\label{mintruncated} 
 Let $E$ be a subspace of $\R[\xx]$ such that $ 1 \in E$ and $f \in \langle E \cdot E \rangle$ and let $G \subset \langle E \cdot E \rangle$.
 We assume that $f$ attains its minimum at some points $\xx^* \in \R^n$. We define the following extrema:
  \begin{itemize}
  \item $f^*= \min_{x\in\R^n} f(x),$
  \item $f^{\mu}_{G,E}= \inf \ \{ \Lambda(f)$\footnote{the notation
      $^\mu$ stands for measure} s.t.  $\Lambda \in\Lc_{G ,E,\succcurlyeq} \},$
  \item $f^{sos}_{G,E}= \sup \ \{ \lambda\in \R$ s.t. $f-\lambda \in  \Sc_{G,E}\}.$  
 \end{itemize}
 If $E=\R[\xx]_t$ (resp. $E=\R[\xx]$) and $G'=\SpanD{G}{2t}$ we also denote $f^{\mu}_{G',E}$ by $f^{\mu}_{G,t}$ (resp. $f^{\mu}_{G}$) and $f^{sos}_{G',E}$ by $f^{sos}_{G,t}$ (resp. $f^{sos}_{G}).$ 
\end{defn}

\begin{remark}
 By convention if the sets are empty, $f^{sos}_{G,E} = -\infty$ and $f^{\mu}_{G,E}= +\infty$.
\end{remark}

We now analyse the relations between these different extrema.

\begin{remark}\label{subset}
 Let $E \subset E'$ be two subspaces of  $\R[\xx]$ and $G
 \subset \langle E \cdot E \rangle$, $G' \subset \langle E' \cdot E'
 \rangle$ with $G \subset G'$ then we directly deduce that 
 \begin{itemize}
  \item $f^{\mu}_{G,E} \le f^{\mu}_{G',E'}$,
  \item $f^{sos}_{G,E} \le f^{sos}_{G',E'}$, 
 \end{itemize}
from the fact that $\mathcal{L}_{G',E',\succcurlyeq} \subset \mathcal{L}_{G,E,\succcurlyeq}$ and $\mathcal{S}_{G,E} \subset \mathcal{S}_{G',E'}$.
\end{remark}

If we take $G\subset {{I}}_{min}(f)$, we have the following relations between the extrema.

\begin{prop}\label{minor}
 Let $G \subset {I}_{min}(f)$. Then $f^{sos}_{G,E} \le f^{\mu}_{G,E} \le f^*$.  
\end{prop}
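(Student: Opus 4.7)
The proposition asserts two inequalities, which I would prove separately; both are variants of weak duality arguments standard in the moment/SoS framework, adapted to the truncated setting and the condition $G \subset I_{min}(f)$.

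For the upper bound $f^{\mu}_{G,E} \le f^{*}$, I would exhibit an explicit feasible linear form achieving value $f^{*}$. The natural choice is the evaluation functional $\Lambda = \tmmathbf{1}_{\xx^{*}}$ at a minimizer $\xx^{*} \in \R^{n}$, which by assumption exists. I would check each of the three membership conditions in the definition \eqref{eq1}: obviously $\tmmathbf{1}_{\xx^{*}}(1) = 1$; for any $p \in E$, $\tmmathbf{1}_{\xx^{*}}(p^{2}) = p(\xx^{*})^{2} \ge 0$; and for any $g \in G \subset I_{min}(f)$, $g$ vanishes on every real minimizer, so $\tmmathbf{1}_{\xx^{*}}(g) = g(\xx^{*}) = 0$, giving $\Lambda \perp G$. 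Hence $\Lambda \in \Lc_{G,E,\succcurlyeq}$ and $f^{\mu}_{G,E} \le \Lambda(f) = f(\xx^{*}) = f^{*}$.

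For the lower bound $f^{sos}_{G,E} \le f^{\mu}_{G,E}$, I would use weak duality: pair an arbitrary feasible $\Lambda \in \Lc_{G,E,\succcurlyeq}$ with an arbitrary feasible $\lambda$, meaning $f - \lambda = \sum_{i=1}^{s} h_{i}^{2} + h$ with $h_{i} \in E$ and $h \in G$. Applying $\Lambda$ to both sides and using linearity,
\[
\Lambda(f) - \lambda = \Lambda(f - \lambda) = \sum_{i=1}^{s}\Lambda(h_{i}^{2}) + \Lambda(h) \ge 0,
\]
because each term $\Lambda(h_{i}^{2})$ is nonnegative by positivity of $\Lambda$ on $E$, and $\Lambda(h) = 0$ by $\Lambda \perp G$. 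Thus $\lambda \le \Lambda(f)$ for every feasible pair, and taking supremum over $\lambda$ and infimum over $\Lambda$ yields the desired inequality. Conventions handle the empty cases: if either feasible set is empty, the corresponding bound is $\pm\infty$ and the inequalities hold vacuously.

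There is no substantial obstacle here; the only care needed is to verify that $f - \lambda$ and the products $h_{i}^{2}$ lie in $\Span{E \cdot E}$ so that the application of $\Lambda$ makes sense, which is immediate from the definition of $\Sc_{G,E}$ and from $f \in \Span{E \cdot E}$, $h \in G \subset \Span{E \cdot E}$, $h_{i} \in E$. Similarly, we use $1 \in E$ (part of the hypothesis of Definition \ref{mintruncated}) to ensure $\Lambda(\lambda) = \lambda \Lambda(1) = \lambda$ is well defined.
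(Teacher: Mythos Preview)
Your proof is correct and follows essentially the same approach as the paper: weak duality via pairing an arbitrary $\Lambda \in \Lc_{G,E,\succcurlyeq}$ with an arbitrary SoS certificate for the first inequality, and feasibility of the evaluation functional $\tmmathbf{1}_{\xx^{*}}$ at a minimizer for the second. Your additional remarks on well-definedness ($1\in E$, everything lying in $\Span{E\cdot E}$) and on the empty-set conventions are useful clarifications that the paper leaves implicit.
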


\begin{proof}\label{subspaces}
  We have $f^{sos}_{G,E} \le f^{\mu}_{G,E}$ because if there exists $\lambda \in \R$ such that $f-\lambda \in \mathcal{S}_{G,E}$, i.e. $f- \lambda = \sum_i h_i^2 + g$ with $h_i \in E$ 
  and $g \in G$ then $\forall \Lambda \in \mathcal{L}_{G,E,\succcurlyeq}$, $\Lambda(f-\lambda)= \Lambda(f)- \lambda = \sum_i \Lambda(h_i^2) \ge 0.$ We deduce that $\Lambda(f) \ge \lambda$ and we conclude $f^{sos}_{G,E} \le f^{\mu}_{G,E}$.
  For the second inequality, let $\xx^*$ be a point of $\R^n$ such that $f(\xx^*)$ is the mininum of $f$ and let $\tmmathbf{1}_{x^*}\in \R[\xx]^* : p \longmapsto p(\xx^*)$ be the evaluation at $\xx^*$.
  Then we have $H_{\tmmathbf{1}_{\xx^*}}\succcurlyeq 0$ , $\tmmathbf{1}_{\xx^*}(1)=1$ and $\tmmathbf{1}_{\xx^*} (G)=0$ since $G\subset  I_{min}(f)$, so that $\tmmathbf{1}_{\xx^*} \in \mathcal{L}_{G,E,\succcurlyeq}.$ 
  We deduce the inequality $f^{\mu}_{G,E} \le f^*$.
\end{proof}

Following the relaxation approach proposed in \cite{Las01}, we are now
going to consider a hierarchy of convex optimization problems and show
that for such hierarchy, the minimum $f^{*}$ is always reached in a
finite number of steps. Let us consider the sequence of spaces
$$ 
\cdots \subset \mathcal{L}_{G,t+1,\succcurlyeq} \subset
\mathcal{L}_{G,t,\succcurlyeq} \subset \cdots 
\ \mathrm{and}\  
\cdots \subset \mathcal{S}_{G,t} \subset
\mathcal{S}_{G,t+1} \subset \cdots 
$$
for $t\in \N$, $G\subset I_{min} (f)$.
Using Remark \ref{subset} and the fact that $(G)= \cup_{t\in \N}\Span{G|t}$,
we check that 
\begin{itemize}
 \item the increasing sequence $\cdots f^{\mu}_{G,t} \le f^{\mu}_{G,t+1}
 \le \cdots \le f^*, \ for \ t \in \N$ converges to $f^{\mu}_{(G)}\le
 f^{*}$, 
  \item  the increasing sequence $\cdots f^{sos}_{G,t} \le f^{sos}_{G,t+1}
 \le \cdots \le f^*, \ for \ t \in \N$ converges to $f^{sos}_{(G)}\le f^{*}$. 
 \end{itemize}

We are going to show that these limits are attained for
some $t\in \N$. 

The next result which is a slight variation of a
result in \cite{LLR07} (and also used in \cite{lasserre:hal-00651759}) shows that for a high enough
degree, the kernel of some truncated Hankel operators allows us to
compute generators of the real radical of an ideal.

\begin{prop}\label{radical}
 For $G\subset \R[\xx]$ with $ I_{grad}(f)\subset (G)$, there exists $t_0 \in \N$ such that $\forall t\ge t_0$, 
 $\forall \Lambda \in \mathcal{L}_{G,t,\succcurlyeq},\ \sqrt[\R]{I_{grad} (f)} \subset (\ker H_{\Lambda^*}^t)$.
\end{prop}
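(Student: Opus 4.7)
The plan is to combine the Real Nullstellensatz with the positivity and annihilation properties of any $\Lambda \in \mathcal{L}_{G,t,\succcurlyeq}$, following the strategy of \cite{LLR07} and \cite{lasserre:hal-00651759}.

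First, since $\R[\xx]$ is Noetherian, I fix finitely many generators $p_1,\ldots,p_s$ of the ideal $\sqrt[\R]{I_{grad}(f)}$, so that it will be enough to prove $p_i \in (\ker H_\Lambda^t)$ for every $i$. Applying the Real Nullstellensatz to each $p_i$ and using $I_{grad}(f) \subset (G)$, I obtain integers $m_i \ge 1$, polynomials $q_{i,j} \in \R[\xx]$ and an explicit representation
\[
p_i^{2m_i} + \sum_j q_{i,j}^2 \;=\; \sum_k g_{i,k}\, r_{i,k}, \qquad g_{i,k}\in G,\ r_{i,k}\in \R[\xx].
\]
I then choose $t_0$ large enough so that, simultaneously for every $i$: (a) each $p_i^{m_i}$ and each $q_{i,j}$ sits in $\R[\xx]_{t_0}$, and (b) each product $g_{i,k}\, r_{i,k}$ has total degree at most $2t_0$, so that the right-hand side lies in $\SpanD{G}{2t_0}$.

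For $t \ge t_0$ and any $\Lambda \in \mathcal{L}_{G,t,\succcurlyeq}$, the orthogonality $\Lambda \perp \SpanD{G}{2t}$ together with the identity above yields
\[
\Lambda\bigl(p_i^{2m_i}\bigr) + \sum_j \Lambda\bigl(q_{i,j}^2\bigr) \;=\; 0.
\]
Since each summand is nonnegative by positivity of $\Lambda$ on $\R[\xx]_t$, each of them vanishes; in particular $\Lambda(p_i^{2m_i}) = 0$. The first item of Lemma \ref{positive} applied to $p = p_i^{m_i} \in \R[\xx]_t$ then gives $p_i^{m_i} \in \ker H_\Lambda^t$, hence $p_i^{m_i} \in (\ker H_\Lambda^t)$.

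The main obstacle is the last step: upgrading $p_i^{m_i} \in (\ker H_\Lambda^t)$ to $p_i \in (\ker H_\Lambda^t)$. The natural route is to prove that the ideal $(\ker H_\Lambda^t)$ is itself real radical, which allows one to extract $p_i$ from its power via the Real Nullstellensatz in reverse. To do so, the same positivity/annihilation argument is applied to every polynomial $h$ admitting a real-radical certificate $h^{2m}+\sum_j s_j^2$ that can be accommodated within degree $2t$; Noetherianity of $\R[\xx]$ ensures that only finitely many such iterations are needed, and absorbing all resulting degree bounds into $t_0$ closes the argument. Equivalently one can invoke the truncated real-radical certificate of \cite{LLR07} adapted in \cite{lasserre:hal-00651759}, which encodes precisely this bookkeeping and yields the stated inclusion $\sqrt[\R]{I_{grad}(f)} \subset (\ker H_\Lambda^t)$.
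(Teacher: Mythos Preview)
Your steps up to and including the conclusion $p_i^{m_i}\in\ker H_\Lambda^t$ are correct and match the paper's argument line for line. The gap is in your final paragraph.

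You are right that one must still pass from $p_i^{m_i}$ to $p_i$, but your proposed route---showing that the ideal $(\ker H_\Lambda^t)$ is real radical---does not work as written. That ideal depends on $\Lambda$, which is quantified \emph{after} $t_0$ is fixed; any degree bounds coming from real-radical certificates for elements of $(\ker H_\Lambda^t)$ would therefore depend on $\Lambda$ as well, and cannot be ``absorbed into $t_0$'' uniformly over all $\Lambda\in\mathcal{L}_{G,t,\succcurlyeq}$. Noetherianity does not rescue this, because you are not iterating inside a single fixed ideal. The appeal to \cite{LLR07} and \cite{lasserre:hal-00651759} does not help either: what those references actually do at this point is the elementary step below, not a real-radicality argument for $(\ker H_\Lambda^t)$.

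The missing step is much simpler and is exactly what the paper compresses into the phrase ``which implies that $h_l\in\ker H_\Lambda^t$ since $H_{\Lambda}^t\succcurlyeq 0$''. It is a one-line descent using only positivity: if $p^{k}\in\ker H_\Lambda^t$ with $k\ge 2$ and $\deg(p^{k})\le t$, then taking $q=p^{k-2}\in\R[\xx]_t$ gives
\[
0=\Lambda\bigl(p^{k}\cdot p^{k-2}\bigr)=\Lambda\bigl((p^{k-1})^2\bigr),
\]
hence $p^{k-1}\in\ker H_\Lambda^t$ by the first item of Lemma~\ref{positive}. Iterating from $k=m_i$ down to $k=1$ yields $p_i\in\ker H_\Lambda^t$ directly (not merely in the ideal it generates). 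The only degree requirement is $\deg(p_i^{m_i})\le t$, which is precisely your hypothesis (a), so no enlargement of $t_0$ is needed.
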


\begin{proof}
 Let $g_1,\ldots,g_n$ be generators of $I:=I_{grad} (f), \ d_s:=
 deg(g_s),\  d:=\max_{s=1,\ldots,n} d_s$ and ${h_1,\ldots,h_k}$ be generators of the ideal $J:=\sqrt[\R]{I}$. 
By the Real Nullstellensatz, for $l \in {1,\ldots,k}$, there exist $m_l \in \N, 
 m_l \ge 1$ and polynomials $u_j^{(l)}$ and sums of squares $\sigma_l$
 such that $h_l^{2m_l} + \sigma_l= \sum_{j=1}^n u_j^{(l)}g_j$. 
As $I\subset (G)$, there exist $t_{0}'$ such that $u_j^{(l)}g_j\in \SpanD{G}{t_{0}'}$.
Set $t_0:=\max_{l\le k,j\le n}(t_{0}'$, $d,deg(h_l^{2m_l}),deg(\sigma_l))$ and let $t\ge t_0$.
 As $u_j^{(l)}g_j\in \SpanD{G}{t} $, $u_j^{(l)}g_j \in \ker
 H_{\Lambda}^t$ for all  $\Lambda \in \mathcal{L}_{G,t,\succcurlyeq}$.
Hence $h_l^{2m_l} + \sigma_l \in \ker H_{\Lambda^*}^t$, which implies
that $h_l \in \ker H_{\Lambda}^t$ since $H_{\Lambda^*}^t\succcurlyeq
0$.
\end{proof}

To compute generators of $I_{min} (f)$, we use the decomposition of $V_{\R}
(I_{grad} (f))$ in components where $f$ has a constant value.

\begin{lem}\label{pis}
 Let $V_{\R}(I_{grad} (f))= W_0 \cup W_1 \cup \ldots \cup W_s$ be the
 decomposition of the variety in disjoint real subvarieties, such that
$f(W_j)=f_j\in \R$ with $f_i < f_j \  \forall 0\le i<  j \leq s$. Then there exist polynomials $p_0,\ldots,p_r \in \R[\xx]$ 
 such that $p_i(W_j)=\delta_{ij}$, where $\delta_{ij}$ is the Kronecker delta function.
\end{lem}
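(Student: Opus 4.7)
The plan is to apply Lagrange interpolation directly in the polynomial $f$. Since by hypothesis $f$ restricts to the distinct real constants $f_0 < f_1 < \ldots < f_s$ on the components $W_0, \ldots, W_s$, the polynomial $f$ itself already serves as a one-dimensional ``coordinate'' separating the level-set pieces, and the indicator polynomials can simply be written down in closed form.

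Concretely, for each $i \in \{0, 1, \ldots, s\}$ I would define
\[
p_i(\xx) := \frac{\prod_{j \ne i}(f(\xx) - f_j)}{\prod_{j \ne i}(f_i - f_j)}.
\]
The denominator is a nonzero real constant because the $f_j$'s are pairwise distinct, and the numerator is a polynomial in $\R[\xx]$ obtained as a product of elements of $\R[\xx]$, so $p_i \in \R[\xx]$. To check the interpolation property, I would take any $x \in W_k$; since $f(x) = f_k$, substitution yields
\[
p_i(x) = \frac{\prod_{j \ne i}(f_k - f_j)}{\prod_{j \ne i}(f_i - f_j)}.
\]
For $k = i$ the numerator and denominator coincide, giving $p_i(x) = 1$, while for $k \ne i$ the index $j = k$ lies in the product in the numerator and contributes the factor $f_k - f_k = 0$, giving $p_i(x) = 0$. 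Hence $p_i(W_k) = \delta_{ik}$ for all $i,k$.

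There is essentially no obstacle here: the whole argument is just the standard Lagrange interpolation identity at the $s+1$ distinct real nodes $f_0, \ldots, f_s$, pulled back through the polynomial $f$. The construction is moreover insensitive to the geometric nature of the $W_j$ (finite, positive-dimensional, irreducible or not), since only the constant value of $f$ on each component is used.
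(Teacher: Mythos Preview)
Your proof is correct and is essentially the same as the paper's: both take $p_i = L_i(f(\xx))$ where the $L_i$ are the Lagrange interpolation polynomials at the distinct nodes $f_0,\ldots,f_s$, which is exactly your closed-form expression. The only difference is that the paper first recalls from \cite{NDS} how the level-set decomposition of $V_{\R}(I_{grad}(f))$ arises from the complex variety, whereas you (legitimately) take this decomposition as given by the hypothesis of the lemma.
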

\begin{proof}
As in \cite{NDS}, we decompose $V_{\C} (I_{grad} (f))$ as an union of
complex varieties \\ $V_{\C}(I_{grad} (f))= W_{\C,0} \cup W_{\C,1} \cup \ldots \cup W_{\C,s}
\cup W_{\C,s+1}$ such that $W_{\C,i}$, $i=0, \ldots, s$ have real points and
$f (W_{\C,i})=f_{i}\in \R$ is constant on $W_{\C,i}$ and $W_{\C,s+1}$ has no real point.
We number these varieties so that $f_{0}< f_{1}< \cdots< f_{s}$ and
$f_{0}=f^{*}$. By construction, the varieties $W_{i}:= W_{\C,i}\cap \R^{n}$ 
are disjoint, $f$ is constant on $W_{i}$ and $V_{\R}(I_{grad} (f))=
W_0 \cup W_1 \cup \ldots \cup W_s$. Let us take 
$p_{i} =L_{i} (f (\xx))$ where $L_{0}, \ldots, L_{s}$ are the Lagrange interpolation
polynomials at the value $f_{0}, \ldots, f_{n} \in \R$. They satisfy  $p_i(W_j)=\delta_{ij}$.
\end{proof}
 
\begin{remark}\label{idempotents}
 By definition the polynomials $p_i$ have the following properties:
 \begin{itemize}
  \item $p_0+ \ldots +p_s \equiv 1$ modulo $\sqrt[\R]{I_{grad} (f)}$.
  \item $(p_i)^2 \equiv p_i$ modulo $\sqrt[\R]{I_{grad} (f)}$ $\forall i=0,\ldots,s$.
 \end{itemize}
\end{remark}

The next results shows that in the sequence of optimization problems
that we consider, the minimum of $f$ is reached from a certain degree.

\begin{thm}\label{mayor}\label{pi-kernel}
For $G\subset \R[\xx]$ with $I_{grad} (f) \subset (G) \subset I_{min} (f)$,
there exists $t_1 \ge 0$ such that $\forall t\ge t_1$,
$f^{sos}_{G,t}=f^{\mu}_{G,t} = f^*$ and $\forall \Lambda^* \in \mathcal{L}_{G,t,\succcurlyeq}$
with $\Lambda^{*} (f)=f^{\mu}_{G,t}$, we have $p_i \in  \ker
H_{\Lambda^*}^t \ \forall i=1,\ldots,s$.
\end{thm}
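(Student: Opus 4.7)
My plan is to reduce both claims to a single decomposition of $f-f^*$ produced by the Lagrange interpolants of Lemma~\ref{pis}. Writing
\begin{equation*}
q := (f-f^*) - \sum_{i=1}^{s}(f_i-f^*)\, p_i^2,
\end{equation*}
the identities $p_i(W_j)=\delta_{ij}$ and $f\equiv f_j$ on $W_j$ force $q$ to vanish on $V_\R(I_{grad}(f))=W_0\cup\cdots\cup W_s$, so $q\in I(V_\R(I_{grad}(f)))=\sqrt[\R]{I_{grad}(f)}$. Moreover the leading term $\sum_{i=1}^s(f_i-f^*)p_i^2$ is already a sum of squares since $f_i-f^*>0$ for $i\ge 1$. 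This one identity will carry both halves of the theorem.

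For the exactness statement $f^{sos}_{G,t}=f^{\mu}_{G,t}=f^*$, I would invoke the bounded-degree SOS certificate of \cite{Nie11}: because the minimum of $f$ is reached on $\R^n$, there exist a sum of squares $\sigma$ and $g\in I_{grad}(f)$ with $f-f^* = \sigma + g$ and a uniform bound on their degrees. Since $I_{grad}(f)\subset (G)$, for $t$ large enough that $\sigma$ is a sum of squares of elements of $\R[\xx]_t$ and $g\in\SpanD{G}{2t}$, this yields $f-f^*\in\mathcal{S}_{G,t}$, hence $f^{sos}_{G,t}\ge f^*$. Combining with Proposition~\ref{minor} gives $f^{sos}_{G,t}=f^{\mu}_{G,t}=f^*$.

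For the kernel statement, fix $\Lambda^*\in\mathcal{L}_{G,t,\succcurlyeq}$ attaining $\Lambda^*(f)=f^*$ and apply $\Lambda^*$ to the decomposition of $f-f^*$. The left side equals $\Lambda^*(f)-f^*\Lambda^*(1)=0$, and I need $\Lambda^*(q)=0$ to isolate the $p_i^2$ terms. By Proposition~\ref{radical}, for $t\ge t_0$ each generator $h_l$ of $\sqrt[\R]{I_{grad}(f)}$ lies in $\ker H_{\Lambda^*}^t$; writing $q=\sum_l c_l h_l$ with the $c_l$ of degree at most $\deg q$ and taking $t$ beyond this bound makes each $\Lambda^*(c_l h_l)=0$ by the defining property of $\ker H_{\Lambda^*}^t$, so $\Lambda^*(q)=0$. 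Therefore
\begin{equation*}
\sum_{i=1}^{s}(f_i-f^*)\,\Lambda^*(p_i^2) = 0,
\end{equation*}
and non-negativity of each summand (using $f_i-f^*>0$ and $\Lambda^*(p_i^2)\ge 0$) forces $\Lambda^*(p_i^2)=0$ for every $i=1,\ldots,s$. Lemma~\ref{positive} then yields $p_i\in\ker H_{\Lambda^*}^t$, as required.

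\textbf{Main obstacle.} The hardest part is the degree bookkeeping: $t_1$ must be chosen to exceed simultaneously the threshold $t_0$ from Proposition~\ref{radical}, the degrees of the Lagrange interpolants $p_i$ (bounded in terms of $s$ and $\deg f$), the degree of $q$, and the degree bound in Nie's SOS certificate for $f-f^*$. All of these depend only on $f$ and $G$ (not on $\Lambda^*$), so their maximum is a valid $t_1$; the rest of the argument is a careful unfolding of the definitions of $\mathcal{L}_{G,t,\succcurlyeq}$, $\mathcal{S}_{G,t}$, and $\ker H_{\Lambda^*}^t$.
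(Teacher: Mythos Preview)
Your approach is essentially the paper's: invoke Nie's exactness result for the gradient relaxation and transfer it to $G$ by monotonicity, then use the Lagrange-interpolant decomposition $f-f^*\equiv\sum_{i=1}^s(f_i-f^*)p_i^2$ modulo $\sqrt[\R]{I_{grad}(f)}$ to force $\Lambda^*(p_i^2)=0$. Your direct verification that $q$ vanishes on each $W_j$ is a clean substitute for the paper's use of the idempotent relations $p_i^2\equiv p_i$ and $\sum p_i\equiv 1$ from Remark~\ref{idempotents}.

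There is, however, a genuine misstatement in your first part. You write that Nie's theorem provides an \emph{exact} certificate $f-f^* = \sigma + g$ with $\sigma$ a sum of squares and $g\in I_{grad}(f)$. This is false in general: the paper explicitly notes (immediately after this theorem) that example~3.4 of \cite{NDS} exhibits an $f$ for which $f-f^*$ is \emph{not} a sum of squares modulo $I_{grad}(f)$. What \cite{Nie11} actually yields is $f^{sos}_{D,t}=f^*$ for $t\ge t_1'$, i.e.\ the supremum equals $f^*$ without necessarily being attained. The paper's argument therefore proceeds by the inclusion $\SpanD{D}{t}\subset\SpanD{G}{t+d}$ and Remark~\ref{subset} to obtain $f^*=f^{sos}_{D,t}\le f^{sos}_{G,t+d}\le f^{\mu}_{G,t+d}\le f^*$ directly at the level of suprema, never producing an explicit SOS decomposition of $f-f^*$. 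Your argument is easily repaired along the same lines, but as written the sentence ``this yields $f-f^*\in\mathcal{S}_{G,t}$'' is unjustified.

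A minor point: your claim that $q=\sum_l c_l h_l$ with $\deg c_l\le\deg q$ has no a priori justification. What you actually need (and have) is that $q$ is a \emph{fixed} element of the ideal $(h_1,\ldots,h_k)$, so some representation with bounded-degree $c_l$ exists; fold that bound into $t_1$.
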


\begin{proof}
By \cite{Nie11}[Theorem 2.3], there exists $t_{1}'\in \N$ such that
$\forall t\ge t_{1}'$, $f_{D,t}^{sos}= f_{D,t}^{\mu}=f^{*}$ where
$D=\{\frac{\partial f}{\partial x_1},..,\frac{\partial f}{\partial x_n}\}$. 
As $(D) = I_{grad} (f)\subset (G)= (g_{1}, \ldots, g_{s})$, there exists
$q_{i,j}\in \R[\xx]$ such that $\frac{\partial f}{\partial x_{i}}= \sum_{j=1}^{s} q_{i,j} g_{j}$. Let
$d= \max\{\deg (q_{i,j}), i=1, \ldots, n, j=1, \ldots, s\}$. Then 
$\SpanD{D}{t}\subset \SpanD{G}{t+d}$. By Remark \ref{subset} and 
Proposition \ref{minor},
for $t\geq t'_{1}$, $f^{*}=f^{sos}_{D,t}\leq f^{sos}_{G,t+d}\leq
f^{\mu}_{G,t+d}\leq f^{*}$. Thus for $t\geq t'_{1}+d$, we have
\begin{equation}\label{desco0}
 f^{sos}_{G,t}= f^{\mu}_{G,t}=f^{*}.
\end{equation}

Let $J =\sqrt[\R]{I_{grad} (f)}$. 
By Lemma \ref{pis} and Remark \ref{idempotents}, we can write 
 \begin{equation}
  f\equiv \sum_{i=0}^s f_i\,p_i^2 \ modulo \ J,  \label{desco1}
 \end{equation}
 where $f_i=f(W_i) \in \R$ and $f_0=f(W_0)=f^*$ .
 Then
 \begin{equation}
  f\equiv f^* p_0^2 + \sum_{i=1}^s f_i \,p_i^2 \equiv f^*(1-\sum_{r=1}^s p_r^2)+\sum_{i=1}^s f_i \,p_i^2 \equiv f^*+\sum_{i=1}^s (f_i-f^*)\,p_i^2 \ modulo \ J. \label{desco2}
 \end{equation}
 Hence
 \begin{equation}
  f-f^*\equiv \sum_{i=1}^s (f_i-f^*)\,p_i^2 + h. \label{desco3}
 \end{equation}
with $h\in J$.
By Theorem \ref{radical}, there exists $t''_{1}\geq t_{0}$ such that $\Lambda
(h)=0$ for all $\Lambda \in \ker \mathcal{L}_{G,t_{1}'',\succcurlyeq}$. 

Let us take $t_1 = max\{ t_{1}'+d, t_{1}'', \deg(p_1), \ldots, \deg (p_{s})\}$,  $t\ge
t_{1}$ and $\Lambda^{*} \in \mathcal{L}_{G,t,\succcurlyeq}$ such that 
$\Lambda^{*} (f)=f^{\mu}_{G,t}$.
From Equation \eqref{desco0}, we deduce that
\begin{equation}\label{desco4}
\Lambda^{*} (f-f^{*}) = 0  
=  \sum_{i=1}^s (f_i-f^*) \Lambda^{*} (p_i^2) 
\end{equation} 
This implies that 
$\Lambda^*(p_i^2) = 0$ and $p_{i} \in \ker H_{\Lambda^{*}}^{t}$ for $i=1, \ldots, s$, since $f_i - f^* > 0$ and
$ \Lambda^{*} \succcurlyeq 0$ on $\R[\xx]_t$.
\end{proof}

The example 3.4 in \cite{NDS} shows that it may not always be possible to write
$f-f^{*}$ as a sum of squares modulo $I_{grad} (f)$ but, from the
previous result we see that $f-f^{*}$ is the limit of sums of squares modulo $I_{grad}
(f)\cap \R[\xx]_{t}$ for a fixed $t\geq t_{1}$. Under the assumption that
there exists $\xx^{*}\in \R^{n}$ such that $f (\xx^{*})=f^{*}$, we can
construct $\Lambda^{*}=\ev_{\xx^{*}}\in \Lc_{G,t,\succcurlyeq}$ such
that $\Lambda^{*} (f) = f^{*}$. This means that $f^{\mu}_{G,t}$ is
reached for $t\geq t_{1}$.

A direct corollary of the previous theorem is that for any $G\subset \R[\xx]$ with $I_{grad}
(f) \subset (G) \subset I_{min} (f)$, we have
$$ 
f^{sos}_{I_{grad} (f)} = f^{sos}_{(G)}= f^{\mu}_{I_{grad} (f)}=f^{\mu}_{(G)}=f^{*}.
$$

As for the construction of generators of the real radical
$\sqrt[\R]{I_{grad} (f)}$ (Proposition \ref{radical}), we can
construct generators of $I_{min} (f)$ from the kernel of a truncated Hankel operator associated to any linear form which minimizes $f$:
\begin{thm}\label{minker}
For $G\subset \R[\xx]$ with $I_{grad} (f) \subset (G) \subset I_{min} (f)$,
there exists $t_2 \in \N$ such that
$\forall t\ge t_2$, for $\Lambda^* \in \mathcal{L}_{G,t,\succcurlyeq}$
with $\Lambda^{*} (f)=f^{\mu}_{G,t}$, we have $I_{min}(f) \subset (\ker H_{\Lambda^*}^t)$.
\end{thm}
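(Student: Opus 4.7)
The plan is to combine Proposition \ref{radical} (which places $\sqrt[\R]{I_{grad}(f)}$ inside $(\ker H_{\Lambda^*}^t)$ from some degree $t_0$ on) with Theorem \ref{mayor} (which, from degree $t_1$ on, places the ``non-minimizing'' Lagrange polynomials $p_1,\ldots,p_s$ in $\ker H_{\Lambda^*}^t$) and use the partition of unity $p_0+p_1+\cdots+p_s\equiv 1 \bmod \sqrt[\R]{I_{grad}(f)}$ from Remark \ref{idempotents}. Setting $t_2:=\max\{t_0,t_1,\deg p_1,\ldots,\deg p_s\}$ should make all three ingredients available simultaneously for every $t\ge t_2$.

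For a fixed $t\ge t_2$ and $\Lambda^*\in\Lc_{G,t,\succcurlyeq}$ with $\Lambda^*(f)=f^{\mu}_{G,t}$, I would pick any $h\in I_{min}(f)=I(W_0)$ and write
\[
h = h\bigl(p_0+p_1+\cdots+p_s\bigr) + h\bigl(1-p_0-\cdots-p_s\bigr).
\]
The second summand lies in $\sqrt[\R]{I_{grad}(f)}$ by Remark \ref{idempotents}, hence in $(\ker H_{\Lambda^*}^t)$ by Proposition \ref{radical}. The key observation is that $h\,p_0$ also lies in $\sqrt[\R]{I_{grad}(f)}$: indeed $h$ vanishes on $W_0$ while $p_0$ vanishes on $W_1\cup\cdots\cup W_s$, so $h\,p_0$ vanishes on $V_\R(I_{grad}(f))=W_0\cup\cdots\cup W_s$, and the Real Nullstellensatz gives $h\,p_0\in I(V_\R(I_{grad}(f)))=\sqrt[\R]{I_{grad}(f)}$. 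Hence $h\,p_0\in(\ker H_{\Lambda^*}^t)$ too.

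It remains to handle the terms $h\,p_i$ for $i=1,\ldots,s$. By Theorem \ref{mayor} applied to $t\ge t_1\ge\deg p_i$, each $p_i$ lies in $\ker H_{\Lambda^*}^t$, hence $h\,p_i\in (\ker H_{\Lambda^*}^t)$ as an element of the ideal generated by that kernel. Adding up, $h\in(\ker H_{\Lambda^*}^t)$. Since $I_{min}(f)$ is finitely generated (Hilbert basis theorem), applying this to a finite generating set yields $I_{min}(f)\subset(\ker H_{\Lambda^*}^t)$.

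The main subtlety is not really algebraic but bookkeeping: one has to be careful that when the decomposition $h=h\,p_0+\cdots+h\,p_s + h(1-\sum p_i)$ is expanded, the ``modulo $\sqrt[\R]{I_{grad}(f)}$'' parts appearing from Remark \ref{idempotents} are produced by actual polynomials (not only congruence classes), so that we can speak of them as genuine elements of the ideal $(\ker H_{\Lambda^*}^t)$; this is automatic since the relation $1-\sum p_i\in\sqrt[\R]{I_{grad}(f)}$ is an equality of polynomials. The second, and essentially only, obstacle is making sure the degree bound $t_2$ subsumes both the degree bound of Proposition \ref{radical} and the degree bounds of the $p_i$'s used in Theorem \ref{mayor}, which is already built into the choice above.
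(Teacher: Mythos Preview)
Your argument is correct and uses exactly the same three ingredients as the paper---Proposition \ref{radical}, Theorem \ref{mayor}, and the partition of unity from Remark \ref{idempotents}---but the execution differs in a small yet instructive way. The paper does not decompose $h$ itself; instead it decomposes $h_j^{2}$ for a fixed finite generating set $h_1,\ldots,h_k$ of $I_{min}(f)$, writing
\[
\Lambda^{*}(h_j^{2})=\Lambda^{*}(h_j^{2}p_0)+\sum_{i=1}^{s}\Lambda^{*}(h_j^{2}p_i),
\]
and then uses positivity of $H_{\Lambda^{*}}^{t}$ once more to conclude $h_j\in\ker H_{\Lambda^{*}}^{t}$ (the kernel itself, not merely the ideal it generates). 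This forces the degree bound $t_2$ in the paper to absorb an extra $2d$, where $d=\max_j\deg h_j$, so that $h_j^{2}\in\R[\xx]_{t}$ and the pairing $\Lambda^{*}(h_j^{2}p_i)$ makes sense. Your route sidesteps this: by working directly in the ideal $(\ker H_{\Lambda^{*}}^{t})$ you never need $h$ (or $h^2$) to have controlled degree, so your $t_2=\max\{t_0,t_1\}$ is smaller and the squaring/positivity step disappears. The price is that you only obtain $h\in(\ker H_{\Lambda^{*}}^{t})$ rather than the stronger $h_j\in\ker H_{\Lambda^{*}}^{t}$; since the theorem statement and its downstream use in Theorem \ref{thprin} require only the ideal inclusion, nothing is lost.
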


\begin{proof}
 Let $I=I_{grad} (f)$ and $J =\sqrt[\R]{I}$. 
 We consider again the above decomposition $V_{\R}(I)= W_0 \cup W_1 \cup
 \ldots \cup W_s$
with $f_{0}=f (W_{0})< f_{1} = f (W_{1}) < \cdots< f_{s}= f (W_{s})$.
We denote by $h_{1}, \ldots, h_{k}$ a family of generators of the ideal
$I_{min}(f)=I(W_0)$ and $d=\max\{\deg (h_{1}),\ldots, \deg (h_{k})\}$.

Let us fix $0\le j\le k$ and show that $h_{j} \in \ker H_{\Lambda^*}^t$ for $t$
sufficiently large. 
We know that $p_0 h_{j}^2 \mid_{W_0} =0$ and that for $i=1,\ldots,s$, 
 $p_0 h_{j}^2 \mid_{W_i} =0 $ which implies that $p_0
 h_{j}^2 \in J:= \sqrt[\R]{I_{grad} (f)}$. By Theorem \ref{radical}, there
 exists $t_{2,j}'$ such that $\Lambda (p_0 h_{j}^2)=0$  for all
 $\Lambda \in \mathcal{L}_{G,t,\succcurlyeq}$
with $t\ge t_{2,j}'$.

By Theorem \ref{pi-kernel}, $p_i \in \ker H_{\Lambda^*}^t$ for $t\geq
t_{1}$. By Remark \ref{idempotents}, $p_0+ \ldots +p_s \equiv 1$
modulo $J$.
 By Theorem \ref{radical}, there
 exists $t_{2}''$ such that $\Lambda (p_0+ \ldots +p_s-1)=0$  for all
 $\Lambda \in \mathcal{L}_{G,t,\succcurlyeq}$
with $t\ge t_{2}''$.

Let us take $t_{2} := \max\{t_{1}+2\, d, t_{2}''+ 2\,d, t_{2,1}', \ldots, t_{2,k}'\}$ and
$t\ge t_{2}$. Then for
$\Lambda^* \in \mathcal{L}_{G,t,\succcurlyeq}$ with $\Lambda^{*} (f)= f^{\mu}_{G,t}$,
we have 

$$
\Lambda^*(h_{j}^2) = \Lambda^*(h_{j}^2 p_0)+\Lambda^*(h_{j}^2
p_1)+\ldots+\Lambda^*(h_{j}^2 p_s) =0.
$$
Hence, $h_{j} \in \ker H_{\Lambda^*}^t$ for $j=1, \ldots, k$ and $I_{min}(f) \subset (\ker H_{\Lambda^*}^t)$.
\end{proof}

We introduce now the notion of {\em generic linear form for $f\in \R[\xx]$}. Such
a linear form will allow us to compute $I_{min} (f)$ as we will see.
\begin{prop}\label{generic}
 For $\Lambda^* \in \mathcal{L}_{G,E,\succcurlyeq}$ and $p \in \R[\xx]$, the following assertions are equivalent:
 \begin{itemize}
  \item[(i)] $rank H_{\Lambda^*}^{E} = max_{\Lambda \in \mathcal{L}_{G,E,\succcurlyeq},\Lambda(p)=p^{\mu}_{G,E}} rank H_{\Lambda}^{E}$.
  \item[(ii)]  $\forall \Lambda \in \mathcal{L}_{G,E,\succcurlyeq}$
    such that $\Lambda(p)=p^{\mu}_{G,E}$, $\ker H_{\Lambda^*}^{E} \subset \ker H_{\Lambda}^{E}$.
  \item[(iii)]$rank H_{\Lambda^*}^{E_0} = max_{\Lambda \in \mathcal{L}_{G,E,\succcurlyeq}, \Lambda(p)=p^{\mu}_{G,E}} rank H_{\Lambda}^{E_0}$ for any subspace $E_0 \subset E$.
 \end{itemize}
 We say that  $\Lambda^* \in \mathcal{L}_{G,E,\succcurlyeq}$ is
 generic for $p$ if it satisfies one of the equivalent conditions (i)-(iii).
 \end{prop}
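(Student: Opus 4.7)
My plan is to prove the equivalences via $(i) \Leftrightarrow (ii)$ and $(ii) \Leftrightarrow (iii)$. The main tool is Lemma \ref{positive}, combined with convexity of the feasible set
\[
\mathcal{F} \;:=\; \{\Lambda \in \mathcal{L}_{G,E,\succcurlyeq} \mid \Lambda(p) = p^\mu_{G,E}\}.
\]
Concretely, for any $\Lambda^*, \Lambda \in \mathcal{F}$, the average $\Lambda'' := \tfrac{1}{2}(\Lambda^* + \Lambda)$ again lies in $\mathcal{F}$ by convexity, and the second part of Lemma \ref{positive} yields $\ker H_{\Lambda''}^E = \ker H_{\Lambda^*}^E \cap \ker H_\Lambda^E$. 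A further fact I would record is that, for any positive form $\Lambda$ and any subspace $E_0 \subset E$, one has $\ker H_\Lambda^{E_0} = \ker H_\Lambda^E \cap E_0$; this is immediate from the positivity characterization $\Lambda(q^2) = 0 \Leftrightarrow q \in \ker H_\Lambda^E$ (first part of Lemma \ref{positive}), applied separately on $E$ and $E_0$.

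The three easy implications are then almost by inspection. For $(ii) \Rightarrow (i)$: the inclusion $\ker H_{\Lambda^*}^E \subset \ker H_\Lambda^E$ gives $\rank H_{\Lambda^*}^E \ge \rank H_\Lambda^E$ by rank--nullity, and maximizing over $\mathcal{F}$ yields (i). For $(ii) \Rightarrow (iii)$: intersecting both sides of $\ker H_{\Lambda^*}^E \subset \ker H_\Lambda^E$ with $E_0$ and invoking the restriction identity gives $\ker H_{\Lambda^*}^{E_0} \subset \ker H_\Lambda^{E_0}$, whence (iii) by rank--nullity. Finally $(iii) \Rightarrow (i)$ is trivial by specializing to $E_0 = E$.

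The heart of the argument is $(i) \Rightarrow (ii)$. Given any $\Lambda \in \mathcal{F}$ and the corresponding average $\Lambda'' \in \mathcal{F}$,
\[
\ker H_{\Lambda''}^E \;=\; \ker H_{\Lambda^*}^E \cap \ker H_\Lambda^E \;\subset\; \ker H_{\Lambda^*}^E,
\]
so $\rank H_{\Lambda''}^E \ge \rank H_{\Lambda^*}^E$, while maximality in (i) forces $\rank H_{\Lambda''}^E \le \rank H_{\Lambda^*}^E$. The resulting equality of ranks, combined with the kernel inclusion, yields $\ker H_{\Lambda''}^E = \ker H_{\Lambda^*}^E$, hence $\ker H_{\Lambda^*}^E \subset \ker H_\Lambda^E$, which is (ii). The one subtle point --- and the main formal obstacle --- is that Lemma \ref{positive} is stated only for $G = \{0\}$ and normalized forms, whereas I apply it with a possibly nontrivial $G$ and to the unnormalized sum $\Lambda^* + \Lambda$. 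Inspecting its proof, however, both the positivity characterization and the kernel additivity rely only on positivity (not on $\Lambda(1) = 1$ or $G$-orthogonality), and Hankel kernels are invariant under nonzero scaling; so the identities transfer directly to our setting.
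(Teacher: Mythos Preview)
Your proof is correct and follows essentially the same approach as the paper: the key step $(i)\Rightarrow(ii)$ is handled identically via the convex average $\tfrac12(\Lambda^*+\Lambda)$ and Lemma~\ref{positive}, and $(ii)\Rightarrow(iii)$ and $(iii)\Rightarrow(i)$ match the paper's arguments (the paper phrases the restriction step as $\ker H_{\Lambda^*}^{E_0}\subset \ker H_{\Lambda^*}^{E}$ rather than your equality $\ker H_\Lambda^{E_0}=\ker H_\Lambda^{E}\cap E_0$, but these amount to the same positivity observation). Your added implication $(ii)\Rightarrow(i)$ is redundant but harmless, and your remark about the scope of Lemma~\ref{positive} is a valid clarification.
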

 
 \begin{proof}
(i) $\Longrightarrow$ (ii): Note that $\frac{1}{2}(\Lambda+\Lambda^*)
\in \mathcal{L}_{G,E,\succcurlyeq}$ and $\frac{1}{2}(\Lambda+\Lambda^*)(p)=
p^{\mu}_{G,E}$
and $\ker H^E_{\frac{1}{2}(\Lambda+\Lambda^*)}=\ker H^E_\Lambda \cap \ker H^E_{\Lambda^{*}}$ (using Lemma \ref{positive}). Hence,
$rank  H^E_{\frac{1}{2}(\Lambda+\Lambda^*)} \ge rank  H^E_{\Lambda^{*}}$ and
thus equality holds. This implies that $\ker H^E_{\frac{1}{2}(\Lambda+\Lambda^*)}=\ker H^E_{\Lambda^{*}}$ is thus contained in 
$\ker H^E_\Lambda$.\\
(ii) $\Longrightarrow$ (iii): Given $E_0\subset E$, we show that
$\ker H^{E_0}_{\Lambda^{*}}\subset \ker H^{E_0}_{\Lambda}$. By Lemma \ref{positive}, we have
$\ker H^{E_0}_{\Lambda^{*}}\subset \ker H^{E}_{\Lambda^{*}}$ and, by the above, we have
$\ker H^{E}_{\Lambda^{*}} \subset \ker H^E_\Lambda$.\\
(iii) $\Longrightarrow$ (i) This implication is obvious.
\end{proof}

The next result, which refines Theorem \ref{minker}, shows only
elements in $I_{min} (f)$ are involved in the kernel of a truncated
Hankel operator associated to a generic linear form for $f$.
\begin{thm}\label{kermin}
Let $E, G$ be as in Definition \ref{mintruncated} with $G\subset I_{min}
(f)$. If $\Lambda^* \in \mathcal{L}_{G,E,\succcurlyeq}$ 
is generic for $f$ and such that $\Lambda^{*} (f) = f^{*}$, then $\ker
H_{\Lambda^*}^{E} \subset I_{min}(f)$.  
\end{thm}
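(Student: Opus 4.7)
The plan is to prove the kernel inclusion by showing that any $p \in \ker H_{\Lambda^*}^{E}$ vanishes at every real minimizer of $f$. Since $I_{min}(f)$ is precisely the vanishing ideal of the minimizer set $V_{min} = \{\xx^* \in \R^n : f(\xx^*) = f^*\}$, this will suffice.

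First, for each minimizer $\xx^* \in V_{min}$, I would introduce the evaluation functional $\ev_{\xx^*} : q \mapsto q(\xx^*)$ and verify that $\ev_{\xx^*}$ (restricted to $\Span{E \cdot E}$) lies in $\mathcal{L}_{G,E,\succcurlyeq}$. This is an easy triple check: the form is positive since $\ev_{\xx^*}(q^2) = q(\xx^*)^2 \ge 0$; it is normalized since $\ev_{\xx^*}(1) = 1$; and $\ev_{\xx^*} \perp G$ because $G \subset I_{min}(f)$ forces every $g \in G$ to vanish at the minimizer $\xx^*$. Furthermore, by construction $\ev_{\xx^*}(f) = f(\xx^*) = f^*$.

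Next, I would invoke the genericity hypothesis. By the definition of ``generic for $f$'' (Proposition \ref{generic}), we automatically have $\Lambda^*(f) = f^{\mu}_{G,E}$, and since we are given $\Lambda^*(f) = f^*$, this means $f^{\mu}_{G,E} = f^*$. Thus the functional $\ev_{\xx^*}$ also attains the infimum defining $f^{\mu}_{G,E}$. Applying clause (ii) of Proposition \ref{generic} with $p = f$ and $\Lambda = \ev_{\xx^*}$ yields the key inclusion
\begin{equation*}
\ker H_{\Lambda^*}^{E} \subset \ker H_{\ev_{\xx^*}}^{E}.
\end{equation*}

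Finally, for any $p \in \ker H_{\Lambda^*}^{E}$ we then have $\ev_{\xx^*}(p\,q) = p(\xx^*)\,q(\xx^*) = 0$ for every $q \in E$. Since $1 \in E$ by the hypothesis in Definition \ref{mintruncated}, taking $q = 1$ gives $p(\xx^*) = 0$. As $\xx^*$ was an arbitrary minimizer, $p$ vanishes on all of $V_{min}$, hence $p \in I(V_{min}) = I_{min}(f)$. There is no substantial obstacle here; the work is concentrated in recognizing that evaluation at a minimizer is an admissible minimizing functional, after which genericity immediately transfers kernel information from $\Lambda^*$ to $\ev_{\xx^*}$.
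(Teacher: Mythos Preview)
Your proof is correct and rests on the same key observation as the paper's: for every minimizer $\xx^*$, the evaluation functional $\ev_{\xx^*}$ lies in $\mathcal{L}_{G,E,\succcurlyeq}$ and achieves the optimal value. The difference is purely one of execution. You invoke Proposition~\ref{generic}(ii) as a black box to obtain $\ker H_{\Lambda^*}^{E}\subset \ker H_{\ev_{\xx^*}}^{E}$ and then evaluate at $q=1\in E$. The paper instead argues by contradiction: it forms $\tilde\Lambda=\tfrac{1}{2}(\Lambda^*+\ev_{\xx^*})$, observes that a hypothetical $h\in\ker H_{\Lambda^*}^{E}$ with $h(\xx^*)\neq 0$ lies outside $\ker H_{\tilde\Lambda}^{E}$, and then uses rank maximality by hand to produce a contradictory element $\tilde h$. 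Your route is shorter and more transparent, since Proposition~\ref{generic} has already packaged exactly the rank/kernel trade-off that the paper re-derives here; the paper's argument buys nothing extra.

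One small remark: your sentence ``by the definition of generic for $f$ we automatically have $\Lambda^*(f)=f^{\mu}_{G,E}$'' reads slightly more into conditions (i)--(iii) of Proposition~\ref{generic} than is made explicit there. However, that implicit assumption is already used in the paper's own proof of the equivalence (the step $\tfrac{1}{2}(\Lambda+\Lambda^*)(p)=p^{\mu}_{G,E}$ requires it) and again in the paper's proof of the present theorem (where $\tilde\Lambda(f)=f^*$ must coincide with $f^{\mu}_{G,E}$ for the rank-maximality comparison to apply). So you are on equal footing with the paper on this point.
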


\begin{proof}
 Let $\xx^* \in \R^n$ such that $f(\xx^*)=f^*$ is minimum. Let $\underline{\mathbf{1}}_{\xx^*}$ denotes the evaluation at $\xx^*$ restricted to
 $\Span{E\cdot E}$ and $h \in \ker H_{\Lambda^*}^E$. Our objective is to show that $h(\xx^*)=0$.
 Suppose for contradiction that $h(\xx^*) \neq 0$. We know that
 $\underline{\mathbf{1}}_{\xx^*}\in \mathcal{L}_{G,E,\succcurlyeq}$
 since $G\subset I_{min} (f)$
 and $\underline{\mathbf{1}}_{\xx^*}(f)=f(\xx^*)=f^*$. We define $\tilde{\Lambda}=\frac{1}{2}(\Lambda^*+\underline{\mathbf{1}}_{\xx^*})$.
 By definition $\tilde{\Lambda} \in \mathcal{L}_{G,E,\succcurlyeq}$ and 
 $\tilde{\Lambda}(f)=\frac{1}{2}(\Lambda^*(f)+\underline{\mathbf{1}}_{\xx^*}(f))=\frac{1}{2}(\Lambda^*(f)+f(\xx^*))=f^*$.
 As $h \in \ker H_{\Lambda^*}^E$, $$\tilde{\Lambda}(h^2)=\frac{1}{2}(\Lambda^*(h^2)+\underline{\mathbf{1}}_{\xx^*}(h^2))=\frac{1}{2}h^2(\xx^*)\neq 0$$ 
 thus $h \in \ker H_{\Lambda^*}^E \setminus \ker H_{\tilde{\Lambda}}^E$ 
 and by the maximality of the rank of $H_{\Lambda^*}^E$, $\ker H_{\tilde{\Lambda}}^E \not\subset \ker H_{\Lambda^*}^E$. 
 Hence there exits $\tilde{h} \in \ker H_{\tilde{\Lambda}}^E \setminus \ker H_{\Lambda^*}^E$.
 Then $0= H_{\tilde{\Lambda}}^E(\tilde{h})=\frac{1}{2}(H_{\Lambda^*}^E(\tilde{h})+H_{\underline{\mathbf{1}}_{\xx^*}}^E(\tilde{h}))=
 \frac{1}{2}(H_{\Lambda^*}^E(\tilde{h})+\tilde{h}(\xx^*) \cdot \underline{\mathbf{1}}_{\xx^*})$. As $H_{\Lambda^*}^E(\tilde{h}) \neq 0$
 implies $\tilde{h}(\xx^*) \neq 0$. On the other hand,$$0= H_{\tilde{\Lambda}}^E(\tilde{h})(h)=\tilde{\Lambda}(h\tilde{h})
 =\frac{1}{2}(\Lambda^*(h\tilde{h})+h(\xx^*)\tilde{h}(\xx^*))=\frac{1}{2}(H_{\Lambda^*}^E(h)(\tilde{h})+h(\xx^*)\tilde{h}(\xx^*)).$$
 As $h \in H_{\Lambda^*}^E$, then $$0= H_{\tilde{\Lambda}}^E(\tilde{h})(h)=\frac{1}{2}(h(\xx^*)\tilde{h}(\xx^*)).$$ 
 As $\tilde{h}(\xx^*) \neq 0$, since we have supposed that $h(\xx^*) \neq 0$ it yields a contradiction.
\end{proof}

The last result of this section shows that a generic linear form for $f$
yields the generators of the minimizer ideal $I_{min} (f)$ in high enough degree.
\begin{thm}\label{thprin}
For $G\subset \R[\xx]$ with $I_{grad} (f) \subset (G) \subset I_{min} (f)$,
there exists $t_2\in \N$ (defined in Theorem \ref{minker}) such that
such that $\forall t\ge t_2$, for $\Lambda^* \in
\mathcal{L}_{G,t,\succcurlyeq}$ generic for $f$, we have $\Lambda^*(f)=f^*$ and $(\ker H_{\Lambda^*}^t)=I_{min}(f)$.
\end{thm}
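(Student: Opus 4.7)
The plan is to combine the two inclusions established earlier: Theorem~\ref{minker} provides the inclusion $I_{min}(f)\subset (\ker H_{\Lambda^{*}}^{t})$, while Theorem~\ref{kermin} provides the reverse inclusion $\ker H_{\Lambda^{*}}^{t}\subset I_{min}(f)$. The only preliminary step is to verify that $\Lambda^{*}(f)=f^{*}$, since Theorem~\ref{kermin} requires this hypothesis explicitly.

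First I would choose $t_{2}$ to be the integer provided by Theorem~\ref{minker}, enlarged if necessary so that also $t_{2}\ge t_{1}$, where $t_{1}$ is the bound given by Theorem~\ref{mayor}. Since $\Lambda^{*}\in \Lc_{G,t,\succcurlyeq}$ is generic for $f$, Proposition~\ref{generic}~(i) places $\Lambda^{*}$ inside the set $\{\Lambda\in\Lc_{G,t,\succcurlyeq}:\Lambda(f)=f^{\mu}_{G,t}\}$ over which the rank is maximized. Hence $\Lambda^{*}(f)=f^{\mu}_{G,t}$, and for $t\ge t_{1}$ Theorem~\ref{mayor} gives $f^{\mu}_{G,t}=f^{*}$, so $\Lambda^{*}(f)=f^{*}$.

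Next I would apply Theorem~\ref{minker} directly to $\Lambda^{*}$: its hypotheses are now satisfied ($t\ge t_{2}$ and $\Lambda^{*}(f)=f^{\mu}_{G,t}$), yielding $I_{min}(f)\subset (\ker H_{\Lambda^{*}}^{t})$. For the converse inclusion, I would apply Theorem~\ref{kermin} with $E=\R[\xx]_{t}$ and $G'=\SpanD{G}{2t}$. Since $(G)\subset I_{min}(f)$ we have $G'\subset I_{min}(f)$, so the hypotheses of Theorem~\ref{kermin} are met; the genericity of $\Lambda^{*}$ together with $\Lambda^{*}(f)=f^{*}$ then gives $\ker H_{\Lambda^{*}}^{t}\subset I_{min}(f)$, and therefore $(\ker H_{\Lambda^{*}}^{t})\subset I_{min}(f)$.

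Combining the two inclusions produces $(\ker H_{\Lambda^{*}}^{t})=I_{min}(f)$, which is the claim. There is no serious obstacle here: the theorem is essentially a packaging of the previously established results, and the only matter requiring care is ensuring that the threshold $t_{2}$ simultaneously satisfies the conditions of Theorems~\ref{mayor} and~\ref{minker} and that the space $\SpanD{G}{2t}$ used as the ``$G$'' input to Theorem~\ref{kermin} sits inside $I_{min}(f)$, both of which are immediate from the hypothesis $(G)\subset I_{min}(f)$.
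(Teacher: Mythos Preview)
Your proposal is correct and follows exactly the route the paper takes: the paper's own proof is the single sentence ``We obtain the result as consequence of Theorem~\ref{mayor}, Theorem~\ref{minker} and Theorem~\ref{kermin},'' and you have simply unpacked this by supplying the obvious details (in particular checking that genericity forces $\Lambda^{*}(f)=f^{\mu}_{G,t}$, and that $\SpanD{G}{2t}\subset I_{min}(f)$). Your remark that $t_{2}$ should dominate $t_{1}$ is automatically satisfied by the definition of $t_{2}$ in the proof of Theorem~\ref{minker}, so no enlargement is actually needed.
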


\begin{proof}
We obtain the result as consequence of Theorem \ref{mayor}, Theorem \ref{minker} and Theorem \ref{kermin}.
\end{proof}
As a consequence, in a finite number of steps, the sequence of
optimization problems that we consider gives the minimum of $f$ and
the generators of $I_{min} (f)$.

\section{Zero-dimensional Case}\label{sec-0dim}
In this section, we describe a criterion to detect when the kernel of
a truncated Hankel operator associated to a generic linear form for
$f$ yields the generators of the minimizer ideal.  It is based on a
flat extension property \cite{MoLa2008} and applies to global polynomial optimization
problems where the minimizer ideal $I_{min} (f)$ is zero-dimensional.

\begin{defn} \label{defflatextension}
Given vector subspaces $E_0 \subset E \subset \K[\xx]$ and $\Lambda \in { \Span{E \cdot E}}^*$, $H_{\Lambda}^E$ is
said to be a \textit{flat extension} of its restriction $H^{E_0}_{\Lambda}$ if $rank H^E_{\Lambda} = rank H^{E_0}_{\Lambda}$.
\end{defn}

We recall here a result from \cite{MoLa2008}, which gives a rank condition
for the existence of a flat extension of a truncated Hankel operator.

\begin{thm}\label{theoflatextension}
Consider a monomial set $B\subset \Mon$ connected to 1 and a linear
function $\Lambda$ defined on $\Span{B^+ \cdot B^+}$. 
Let $E=\Span{B}$ and $E^{+}=\Span{B^{+}}$.
Assume that $\rank\, H^{E^+}_{\Lambda} = \rank\, H^E_{\Lambda} = |B|$. 
Then there exists a (unique) linear form $\tilde{\Lambda} \in \K[\xx]^*$ which extends
 $\Lambda$, i.e., $\tilde{\Lambda} (p) = \Lambda (p)$ for all $p \in
 \Span{B^+ \cdot B^+}$, 
satisfying $\rank\, H_{\tilde{\Lambda}} = \rank\,
 H^{E^+}_{\Lambda}$. Moreover, we have $\ker H_{\tilde{\Lambda}}= (\ker H^{E^{+}}_{\Lambda})$.
\end{thm}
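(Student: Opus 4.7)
The plan is to use the rank hypothesis to extract a border basis from $\ker H^{E^+}_\Lambda$ and then build $\tilde{\Lambda}$ via the resulting direct sum decomposition of $\K[\xx]$. Since $B$ is a basis of $E$ and $|B|=\rank H^E_\Lambda$, the operator $H^E_\Lambda$ is injective on $E$. The rank equality yields $\dim \ker H^{E^+}_\Lambda = \dim E^+ - |B| = |\partial B|$, and for each border monomial $m \in \partial B$ there is a unique $r_m \in \Span{B}$ with $f_m := m - r_m \in \ker H^{E^+}_\Lambda$. The family $F := \{f_m : m \in \partial B\}$ is then a rewriting family for $B$, complete in every degree, and a basis of $\ker H^{E^+}_\Lambda$ by dimension count.

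The main step is to verify the commutation condition of Corollary \ref{thmcomanyt}: $\pi_{F,B}(x_i \pi_{F,B}(x_j m)) = \pi_{F,B}(x_j \pi_{F,B}(x_i m))$ for $m \in B$ and $i,j \in [1,n]$. Both sides lie in $\Span{B}$, so by injectivity of $H^E_\Lambda$ it suffices to show that their pairings under $\Lambda$ against every $q \in \Span{B}$ agree. For such $q$, the element $x_i q$ belongs to $E^+$, and since $x_j m - \pi_{F,B}(x_j m) \in \ker H^{E^+}_\Lambda$, pairing against $x_i q$ gives $\Lambda(x_i x_j m \cdot q) = \Lambda(x_i \pi_{F,B}(x_j m) \cdot q)$; swapping $i$ and $j$ provides the symmetric identity, so the right-hand sides coincide. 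One then replaces each factor by its $\pi_{F,B}$ image, using that $x_i \pi_{F,B}(x_j m) - \pi_{F,B}(x_i \pi_{F,B}(x_j m)) \in \ker H^{E^+}_\Lambda$ pairs trivially against $q \in E \subset E^+$. Corollary \ref{thmcomanyt} then yields $\K[\xx] = \Span{B} \oplus (F)$.

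I now define $\tilde{\Lambda}(p) := \Lambda(\pi_{F,B}(p))$ for $p \in \K[\xx]$; this is well-defined since $\pi_{F,B}(p) \in \Span{B}$ lies in the domain of $\Lambda$, and $(F) \subset \ker H_{\tilde{\Lambda}}$ by construction. To show that $\tilde{\Lambda}$ extends $\Lambda$, it suffices to verify $\Lambda(qr) = \tilde{\Lambda}(qr)$ for monomials $q,r \in B^+$. Writing $q = \pi_{F,B}(q) + \epsilon_q$ and $r = \pi_{F,B}(r) + \epsilon_r$ with $\epsilon_q, \epsilon_r \in \ker H^{E^+}_\Lambda$, expanding the product, and applying the kernel relation against the elements of $E^+$, one obtains $\Lambda(qr) = \Lambda(\pi_{F,B}(q) \pi_{F,B}(r))$. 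The residual identity $\Lambda(\pi_{F,B}(q) \pi_{F,B}(r)) = \Lambda(\pi_{F,B}(\pi_{F,B}(q) \pi_{F,B}(r))) = \tilde{\Lambda}(qr)$ then follows by induction on the $B$-index, using the recursive rule $\pi_{F,B}(x_i m) = \pi_{F,B}(x_i \pi_{F,B}(m))$ and the commutation identities established above.

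For the kernel identity, $(F) \subset \ker H_{\tilde{\Lambda}}$ holds by construction. Conversely, if $p \in \ker H_{\tilde{\Lambda}}$ then for every $b \in B$ one has $0 = \tilde{\Lambda}(pb) = \Lambda(\pi_{F,B}(p) \cdot b)$, and the injectivity of $H^E_\Lambda$ on $\Span{B}$ forces $\pi_{F,B}(p) = 0$, i.e., $p \in (F)$. Thus $\ker H_{\tilde{\Lambda}} = (F) = (\ker H^{E^+}_\Lambda)$ and $\rank H_{\tilde{\Lambda}} = \dim \K[\xx]/(F) = |B|$. Uniqueness follows because any extension $\tilde{\Lambda}'$ with the stated rank has $B$ as a basis of its quotient algebra, and the same rewriting argument produces $F \subset \ker H_{\tilde{\Lambda}'}$, so $\tilde{\Lambda}'$ vanishes on $(F)$ and agrees with $\Lambda$ on $\Span{B}$, hence coincides with $\tilde{\Lambda}$. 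The main obstacle is the commutation verification in the second step, where the rank equality $\rank H^{E^+}_\Lambda = \rank H^E_\Lambda$ is used most sensitively to transfer information from pairings on $E$ to pairings on $E^+$.
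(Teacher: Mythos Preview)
The paper does not prove this theorem; it is quoted from \cite{MoLa2008} (Laurent--Mourrain), and your argument is essentially the proof given there: extract a rewriting family $F$ from $\ker H^{E^+}_\Lambda$, verify the commutation relations using the pairing $\Lambda$ and injectivity of $H^E_\Lambda$, apply the border basis criterion, and define $\tilde{\Lambda}$ through the normal form. So you have reconstructed the intended proof.

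One technical caveat worth noting: you invoke Corollary~\ref{thmcomanyt}, which in \emph{this} paper's formulation requires $F$ to be ``complete in any degree'', and Definition~\ref{defnormfam} bakes the \emph{graded} condition $\deg(\gamma(f))=\deg(f)$ into ``complete''. Your family $F=\{m-r_m:m\in\partial B\}$ need not be graded: if $B$ is, say, $\{1,y,y^2,y^3\}\subset\Mon(x,y)$, then $x\in\partial B$ has degree $1$ while $r_x$ may involve $y^3$. The underlying border basis criterion from \cite{m-99-nf} (and the argument in \cite{MoLa2008}) does not actually need gradedness---the recursive projection $\pi_{F,B}$ is well defined because the $B$-index strictly decreases, and the commutation relations suffice---but as written you are appealing to a statement whose hypotheses you have not checked. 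It would be cleaner either to cite the non-graded version directly or to note explicitly that the projection and the direct-sum conclusion go through without the degree hypothesis once $B$ is finite and connected to $1$.
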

In other words, the condition $\rank\, H^{E^+}_{\Lambda} = \rank\,
H^E_{\Lambda} = |B|$ 
implies that the truncated Hankel operator $H^{B^+}_{\Lambda}$ has
a (unique) flat extension to a (full) Hankel operator
$H_{\tilde{\Lambda}}$.

\begin{prop}\label{general}
Let $E$, $G$ be as in Definition \ref{mintruncated} with $G \subset
I_{min} (f)$.
If $\Lambda^* \in \mathcal{L}_{G,E,\succcurlyeq}$ 
coincides with a probability measure $\mu$ on $\langle E \cdot E
\rangle$ and satisfies $\Lambda^*(f)= f^{\mu}_{G,E}$, then $\Lambda^*(f)= f^{\mu}_{G,E}=f^*$.
\end{prop}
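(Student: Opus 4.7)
The plan is short because the hypothesis already packages almost everything. From Proposition \ref{minor} (whose assumption $G\subset I_{min}(f)$ is granted), we already have $f^{\mu}_{G,E}\le f^{*}$, so the only remaining task is to show the reverse inequality $f^{*}\le \Lambda^{*}(f)$.

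To do this, the key observation is that by Definition \ref{mintruncated} the polynomial $f$ itself lies in $\Span{E\cdot E}$. Since $\Lambda^{*}$ is assumed to coincide with the probability measure $\mu$ on $\Span{E\cdot E}$, we may write
\[
\Lambda^{*}(f)=\int_{\R^{n}} f(x)\, d\mu(x).
\]
Now $f(x)\ge f^{*}$ pointwise on $\R^{n}$ (by the very definition of $f^{*}$, which is attained in $\R^{n}$ by standing assumption of Section 3), and $\mu$ is a probability measure, so integrating gives
\[
\Lambda^{*}(f)=\int_{\R^{n}} f\, d\mu\ \ge\ f^{*}\int_{\R^{n}} d\mu\ =\ f^{*}.
\]

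Combining this with $\Lambda^{*}(f)=f^{\mu}_{G,E}$ (hypothesis) and $f^{\mu}_{G,E}\le f^{*}$ (Proposition \ref{minor}) yields the chain $f^{*}\le \Lambda^{*}(f)=f^{\mu}_{G,E}\le f^{*}$, which forces equality throughout and proves the claim. There is no real obstacle: the content of the statement is simply that once $\Lambda^{*}$ is known to be a genuine probability measure on a subspace containing $f$, it cannot evaluate $f$ below its true minimum, and this pins the relaxation value $f^{\mu}_{G,E}$ to $f^{*}$.
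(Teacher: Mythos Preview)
Your proof is correct and follows essentially the same approach as the paper: invoke Proposition~\ref{minor} for the inequality $f^{\mu}_{G,E}\le f^{*}$, then use the measure representation $\Lambda^{*}(f)=\int f\,d\mu$ together with the pointwise bound $f\ge f^{*}$ to obtain $\Lambda^{*}(f)\ge f^{*}$. Your version is slightly more explicit in noting that $f\in\Span{E\cdot E}$ is needed to justify $\Lambda^{*}(f)=\int f\,d\mu$, but the argument is otherwise identical.
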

\begin{proof}
By Proposition \ref{minor}, $f^{\mu}_{G,E} \le f^*$. Conversely as
$f(x) \ge f^*$ for all $x \in \R^n$, we have $\Lambda^*(f) = \int f d\mu \ge \int f^* d\mu=  f^*$. 
\end{proof}

\begin{prop}\label{nogap}
If there exists $\Lambda \in \mathcal{L}_{G,E,\succcurlyeq}$ with $\ker H_{\Lambda}^E = \{ 0 \}$, then $f^{sos}_{G,E} = f^{\mu}_{G,E}$.
\end{prop}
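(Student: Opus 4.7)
The plan is to realise $f^{\mu}_{G,E}$ and $f^{sos}_{G,E}$ as the optima of a primal-dual pair of finite-dimensional semidefinite programs and to apply the standard SDP strong-duality theorem, for which the Slater condition is exactly what the hypothesis provides. Weak duality $f^{sos}_{G,E}\le f^{\mu}_{G,E}$ has already been established in the proof of Proposition \ref{minor} (the argument for this inequality does not use the side hypothesis $G\subset I_{min}(f)$), so only the reverse inequality needs work.

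Assuming $E$ is finite-dimensional and $G\subset \langle E\cdot E\rangle$ is a finite-dimensional subspace spanned by $g_1,\ldots,g_r$, I fix a basis $e_1,\ldots,e_N$ of $E$ and a basis of $\langle E\cdot E\rangle$ containing the constant monomial $1$. Writing a generic $\Lambda\in\langle E\cdot E\rangle^*$ in these coordinates, the primal problem
$f^{\mu}_{G,E}=\inf\{\Lambda(f):\Lambda\in\mathcal{L}_{G,E,\succcurlyeq}\}$
becomes a standard semidefinite program: minimise the linear functional $\Lambda\mapsto \Lambda(f)$ subject to the affine constraints $\Lambda(1)=1$, $\Lambda(g_j)=0$ ($j=1,\ldots,r$), and the matrix inequality $H^E_{\Lambda}=\bigl(\Lambda(e_ie_j)\bigr)_{i,j}\succcurlyeq 0$.

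A routine computation of the Lagrangian dual, with a scalar multiplier $\lambda$ for $\Lambda(1)=1$, scalars $\mu_j$ for $\Lambda(g_j)=0$, and a PSD matrix multiplier $\sigma=(\sigma_{ij})$ for the constraint $H^E_\Lambda\succcurlyeq 0$, produces
\begin{equation*}
\sup\Bigl\{\lambda\in\R\ :\ \exists\,\sigma\succcurlyeq 0,\ g\in G,\ f-\lambda=\textstyle\sum_{i,j}\sigma_{ij}\,e_ie_j + g\Bigr\}.
\end{equation*}
Factoring any PSD matrix as $\sigma=U^TU$ and setting $h_k:=\sum_i U_{ki}\,e_i\in E$ rewrites $\sum_{i,j}\sigma_{ij}e_ie_j$ as $\sum_k h_k^2$, so this dual value is precisely $f^{sos}_{G,E}$.

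Finally, the hypothesis supplies some $\Lambda\in\mathcal{L}_{G,E,\succcurlyeq}$ with $\ker H^E_\Lambda=\{0\}$; combined with $H^E_\Lambda\succcurlyeq 0$, this forces $H^E_\Lambda\succ 0$, which is exactly Slater's strict primal feasibility for the SDP above. The classical strong-duality theorem for semidefinite programming then yields zero duality gap (and attainment of the dual), so $f^{\mu}_{G,E}=f^{sos}_{G,E}$. The only delicate point is writing down the SDP dual cleanly and checking the Cholesky-style square decomposition; once those are in place, strong duality under Slater is a textbook result, and no substantive obstacle is anticipated.
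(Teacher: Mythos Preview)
Your proposal is correct and follows essentially the same approach as the paper: the paper's proof simply observes that $\ker H_\Lambda^E=\{0\}$ together with $H_\Lambda^E\succcurlyeq 0$ gives $H_\Lambda^E\succ 0$, and then invokes Slater's theorem from \cite{Boyd-Vandenbergh} to conclude strong duality $f^{sos}_{G,E}=f^{\mu}_{G,E}$. You supply the details the paper leaves implicit---the explicit SDP formulation, the identification of the Lagrangian dual with $f^{sos}_{G,E}$ via a Cholesky factorisation, and the observation that weak duality from Proposition~\ref{minor} does not require $G\subset I_{min}(f)$---but the underlying argument is the same.
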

\begin{proof}
  If $\ker H_{\Lambda}^E = \{ 0 \}$ implies that $\Lambda \succ 0$,
  i.e, $H_{\Lambda}^E > 0$. Hence by Slater's Theorem of
  \cite{Boyd-Vandenbergh} we have the strong duality then
  $f^{sos}_{G,E} = f^{\mu}_{G,E}$.
\end{proof}

\begin{thm}\label{caszero}
Let $B$ is a monomial set connected to $1$, $E=\Span{B^{+}}$ and 
$G\subset \Span{B^{+}\cdot B^{+}} \cap I_{min} (f)$.
Let $\Lambda^*\in \mathcal{L}_{G,E,\succcurlyeq}$ such that 
$\Lambda^{*}$ is generic for $f$ and satisfies the flat extension property: 
$\rank\, H_{\Lambda^*}^{B^{+}} = \rank\, H_{\Lambda^*}^{B}= | B |$. 
Then there is no duality gap, $f^*=f^{\mu}_{G,E}=f^{sos}_{G,E}$ and
$(\ker H_{\Lambda^*}^{B^+}) = I_{min}(f).$
\end{thm}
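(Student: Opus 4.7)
The plan is to lift $\Lambda^*$ to a linear form on all of $\R[\xx]$ via the flat extension theorem, and then read off all three conclusions from the fact that this lift is an atomic positive measure supported on global minimizers of $f$. First, I would apply Theorem~\ref{theoflatextension} to $\Lambda^*$: the rank conditions $\rank H^{B^+}_{\Lambda^*} = \rank H^B_{\Lambda^*} = |B|$, together with $B$ connected to $1$, are exactly its hypotheses, so there is a unique extension $\tilde\Lambda \in \R[\xx]^*$ of $\Lambda^*$ on $\Span{B^+\cdot B^+}$ with $\rank H_{\tilde\Lambda} = |B|$ and $\ker H_{\tilde\Lambda} = (\ker H^{B^+}_{\Lambda^*})$. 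In particular $B$ is a basis of $\R[\xx]/\ker H_{\tilde\Lambda}$.

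Second, I would verify $\tilde\Lambda \succcurlyeq 0$. Given any $p \in \R[\xx]$, reducing modulo $\ker H_{\tilde\Lambda}$ writes $p = r + q$ with $r\in \Span{B}$ and $q \in \ker H_{\tilde\Lambda}$, so $\tilde\Lambda(p^2) = \tilde\Lambda(r^2)$; since $r^2 \in \Span{B^+\cdot B^+}$ and $\Lambda^*$ is positive on $E=\Span{B^+}$, we obtain $\tilde\Lambda(r^2) = \Lambda^*(r^2) \ge 0$. Proposition~\ref{proppositive} then yields $\tilde\Lambda = \sum_{i=1}^r \lambda_i \tmmathbf{1}_{\zeta_i}$ with $\lambda_i > 0$, distinct $\zeta_i \in \R^n$, $\sum_i \lambda_i = \tilde\Lambda(1) = \Lambda^*(1) = 1$, and $\ker H_{\tilde\Lambda} = I(\{\zeta_1,\dots,\zeta_r\})$. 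Thus $\tilde\Lambda$ coincides on $\Span{E\cdot E}$ with the probability measure $\mu = \sum_i \lambda_i \delta_{\zeta_i}$.

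Third, I would deduce the value equality and the kernel identity. Since $\Lambda^*$ is generic for $f$, one has $\Lambda^*(f) = f^\mu_{G,E}$, so Proposition~\ref{general} applied to $\mu$ yields $\Lambda^*(f) = f^\mu_{G,E} = f^*$. Expanding $f^* = \tilde\Lambda(f) = \sum_i \lambda_i f(\zeta_i)$ with $\sum_i\lambda_i=1$, $\lambda_i>0$, and $f(\zeta_i)\ge f^*$, forces $f(\zeta_i)=f^*$ for every $i$; hence $\{\zeta_1,\dots,\zeta_r\}\subset V(I_{min}(f))$, giving $(\ker H^{B^+}_{\Lambda^*}) = \ker H_{\tilde\Lambda} = I(\{\zeta_1,\dots,\zeta_r\}) \supset I_{min}(f)$. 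Conversely, Theorem~\ref{kermin} applied to $\Lambda^*$ (generic for $f$, achieving $f^*$, with $G\subset I_{min}(f)$) gives $\ker H^{B^+}_{\Lambda^*} \subset I_{min}(f)$ and hence $(\ker H^{B^+}_{\Lambda^*})\subset I_{min}(f)$, so the two ideals coincide.

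Finally, the no-duality-gap claim $f^{sos}_{G,E} = f^\mu_{G,E}$ is the main obstacle. The plan is to invoke Proposition~\ref{nogap}, which reduces it to exhibiting some $\Lambda'\in \Lc_{G,E,\succcurlyeq}$ with $\ker H^E_{\Lambda'} = \{0\}$. A natural construction is to start from the truncated moment functional of a strictly positive measure on $\R^n$ (for example a Gaussian), whose Hankel restricted to $E$ is strictly positive definite, and then perturb it by a small linear correction in the finite-dimensional dual $\Span{B^+\cdot B^+}^*$ so as to enforce $\Lambda'(1)=1$ and $\Lambda'\perp G$; small perturbations preserve positive definiteness. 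Combined with $f^\mu_{G,E}=f^*$ from the previous step, this yields $f^{sos}_{G,E} = f^\mu_{G,E} = f^*$ and completes the proof.
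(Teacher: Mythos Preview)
Your first three paragraphs mirror the paper's argument essentially step for step: apply Theorem~\ref{theoflatextension} to get $\tilde\Lambda$, check $\tilde\Lambda\succcurlyeq 0$ by reducing to $\Span{B}$, invoke Proposition~\ref{proppositive} for the atomic decomposition, use Proposition~\ref{general} to identify $\Lambda^*(f)=f^*$, force $f(\zeta_i)=f^*$ by convexity, and combine $\ker H_{\tilde\Lambda}=I(\{\zeta_i\})$ with Theorem~\ref{kermin} to pin down $(\ker H^{B^+}_{\Lambda^*})=I_{min}(f)$. This part is correct and matches the paper.

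The divergence, and the gap, is in the no-duality-gap step. The paper does \emph{not} manufacture a new linear form: it simply observes that the flat extension hypothesis $\rank H_{\Lambda^*}^{B}=|B|$ already says $\ker H_{\Lambda^*}^{B}=\{0\}$, and feeds $\Lambda^*$ itself into Proposition~\ref{nogap}. Your Gaussian-plus-perturbation route does not work as stated. The constraints $\Lambda'\perp G$ are fixed affine equations, not small perturbations of the Gaussian moment functional; a Gaussian will typically violate them by $O(1)$, and projecting onto that affine subspace can move you far enough to destroy positive definiteness. Worse, a strictly feasible $\Lambda'$ on $E=\Span{B^+}$ need not exist at all: nothing in the hypotheses prevents $G$ from containing (or spanning) a square $p^2$ with $p\in E$---this is compatible with $G\subset I_{min}(f)$, since $I_{min}(f)$ is a real vanishing ideal and hence contains $p^2$ whenever it contains $p$---and then every $\Lambda'\perp G$ satisfies $\Lambda'(p^2)=0$, forcing $p\in\ker H_{\Lambda'}^{E}$. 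So the existence claim you need may be false for the space $E$. The fix is the paper's: exploit the flat extension hypothesis directly to get a positive-definite Hankel on $\Span{B}$, rather than trying to build one on $\Span{B^+}$ from scratch.
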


\begin{proof}
As $\rank\, H_{\Lambda^*}^{B^{+}} = \rank\, H_{\Lambda^*}^{B}= | B|$,
Theorem \ref{theoflatextension} implies that there exists a (unique)
linear function $\tilde{\Lambda}^{*} \in \K[\xx]^*$ which extends
$\Lambda^{*}$.
As $\rank\, H_{\tilde{\Lambda}^{*}} = \rank\,
 H^{B}_{\Lambda^{*}}=|B|$ and $\ker H_{\tilde{\Lambda}^{*}}= (\ker
 H^{B^{+}}_{\Lambda})$, any polynomial $p\in \R[\xx]$ can be reduced modulo
 $\ker H_{\tilde{\Lambda}^{*}}$ to a polynomial $b\in \Span{B}$ so
 that $p-b\in \ker H_{\tilde{\Lambda}^{*}}$.
Then $\tilde{\Lambda}^{*} (p^{2})= \tilde{\Lambda}^{*} (b^{2}) =
\Lambda^{*} (b^{2}) \geq 0$ since $\Lambda^*\in \mathcal{L}_{G,E,\succcurlyeq}$.
This implies that $\tilde{\Lambda}^* \succcurlyeq 0$. By Proposition
\ref{proppositive}, 
$\tilde{\Lambda}^*$ has a decomposition
$\tilde{\Lambda}^*=\sum_{i=1}^r \lambda_i\ev_{\zeta_i}$  
with $\lambda_i > 0$ and $\zeta_i \in \R^n$.

As $\tilde{\Lambda}^{*} (1)={\Lambda}^{*} (1)=1$,  $\tilde{\Lambda}^{*}$ is a probability measure.
By Proposition \ref{general},  $\tilde{\Lambda}^*(f)={\Lambda}^*(f)= f^{\mu}_{G,E}=f^{*}$.

As $\tilde{\Lambda}^*=\sum_{i=1}^r \lambda_i\tmmathbf{1}_{\zeta_i}$  
with $\lambda_i > 0$ and $\zeta_i \in \R^n$ and as
$\tilde{\Lambda}^* (1)=\sum_{i=1}^{r}\lambda_{i}=1$ and 
$\tilde{\Lambda}^* (f)=\sum_{i=1}^{r}\lambda_{i} f (\zeta_{i}) = f^{*}$, we deduce that 
$f(\zeta_i)=f^*$ for $i=1, \ldots, r$ so that $\{\zeta_{1}, \ldots,
\zeta_{r}\}\subset V (I_{min} (f))$.

By Proposition \ref{propradideal}, Theorem
\ref{theoflatextension} and Theorem \ref{kermin}, we have
$$ 
\ker H_{\tilde{\Lambda}^{*}}
 = I(\zeta_{1}, \ldots, \zeta_{r}) = (\ker H^{B^{+}}_{\Lambda^{*}}) \subset I_{min} (f).
$$
We deduce that $\{\zeta_{1}, \ldots, \zeta_{r}\} = V (I_{min} (f))$
so that $(\ker H^{B^{+}}_{\Lambda^{*}})= I_{min} (f)$.

As we have the flat extension condition, $ker H_{\Lambda^*}^{B}
=\{0\}$ and by Proposition \ref{nogap} there is not duality gap:
$f^*=f^{\mu}_{G,E}=f^{sos}_{G,E}$.
\end{proof}
Notice that if the hypotheses of this theorem are satisfied, then
necessarily $I_{min} (f)$ is zero-dimensional.

\section{Minimizer border basis algorithm}
In this section we describe the algorithm to compute the global
minimum of a polynomial, assuming $f^{*}$ is reached in $\R^{n}$ and
$I_{min} (f)$ is zero dimensional.
It can be seen as a type 
of border basis algorithm, for we insert an additional step in the
main loop. It is closely connected to the real radical border basis
algorithm presented in \cite{lasserre:hal-00651759} but instead of
 ``minimizing zero'' to generate new elements in the real radical, we
minimize $f$ to compute generators of the minimizer ideal $I_{min} (f)$.

\subsection{Description}
The convex optimization problems that we consider are the following:\\
\begin{algorithm2e}[H]\caption{\textsc{Optimal Linear
      Form}}
\noindent{}\textbf{Input:} $f\in \R[\xx]$, $M=(\xx^{\alpha})_{\alpha \in A}$ a monomial set containing $1$
with $f= \sum_{\alpha \in A+ A} f_{\alpha} \xx^{\alpha} \in
\Span{M\cdot M}$, $G\subset \R[\xx]$.\\
\noindent{}\textbf{Output:} 
the minimum $f^{*}_{G,M}$ of $\sum_{\alpha\in A+ A} \lambda_{\alpha} f_{\alpha}$ subject  to:
     \begin{itemize}
        \item[--] $H_{\Lambda^*}^M= (h_{\alpha,\beta})_{\alpha,\beta\in A} \succcurlyeq 0$,
        \item[--] $H_{\Lambda^*}^M$ satisfies the Hankel constraints
               $h_{0,0}=1$, and $h_{\alpha,\beta}=h_{\alpha',\beta'}= \lambda_{\alpha+\beta}$
               \\if $\alpha+\beta=\alpha'+\beta'$,
       \item[--]  $\Lambda^{*} (g)= \sum_{\alpha \in  A+
            A}\, g_{\alpha} \lambda_{\alpha}=0$ for all
          $g=\sum_{\alpha \in  A+ A}\, g_{\alpha} \xx^{\alpha}
          \in G\cap \Span{M\cdot M}$.
     \end{itemize}
and $\Lambda^{*}\in \Span{M \cdot M}^{*}$
represented by the vector $[\lambda_{\alpha}]_{\alpha \in A + A}$.
\end{algorithm2e}
This optimization problem is a Semi-Definite Programming problem,
corresponding to the optimization of a linear functional on a
linear subspace of the cone of Positive Semi-Definite matrices.
It is a convex optimization problem, which can be solved efficiently
by SDP solvers.
If an Interior Point Methods is used, the solution $\Lambda^{*}$ is in the interior of
a face on which the minimum $\Lambda^{*} (f)$ is reached so that $\Lambda^{*}$ is generic for $f$.
This is the case for tools such as \texttt{csdp} or \texttt{sdpa},
that we will use in the experimentations.

In the case, where $M=B$ is a monomial set connected to $1$, $F$ is a complete rewriting family for $B$ in degree $\leq
2t$, and $G= \{b-\pi_{F,B} (b); b \in B\cdot B\}$, we will also denote 
\textsc{Optimal Linear Form} $(f,B_{t},\pi_{F,B})$:=\textsc{Optimal Linear Form} $(f,B_{t}, G)$.

\begin{algorithm2e}[H]
\caption{\textsc{Minimizer Ideal of $f$}}
\KwIn {A real polynomial function $f$ with $I_{min}(f)\neq (1)$ and zero-dimensional.\\}
$F:=\{ \frac{\partial   f}{\partial x_1},..,\frac{\partial f}{\partial
  x_n}\}$; 
$t:=\lceil deg(f)/2 \rceil$; 
$B$ := set of monomials of degree $\leq t$;\\
$\tilde{f}:=- \infty$; stop:=false; \\

While not stop\\ 
\begin{enumerate}
 \item Compute the commuting relations for $F$ with respect to $B$ in
   degree $\le 2\,t$.
 \item Reduce them by the existing relations $F$.
 \item Add the non-zero reduced relations to $F$ and update $B$.
 \item Let $[f^{*}_{F,B_{t}}, \Lambda^{*}]:= \textsc{Optimal Linear Form} (f,B_{t},\pi_{F,B_{t}})$
      \begin{enumerate}
\item If there is a duality gap then go to 1 with $t:=t+1$.
\item If $\tilde{f}\neq f^{*}_{F,B_{t}}$ then $\tilde{f}:=
  f^{*}_{F,B_{t}}$; go to 1 with $t:=t+1$.
\item Compute the border basis $F'$ of $F + \ker
 H_{\Lambda^*}^{B_{t-1}}$ and the basis $B'$ in degree $\leq 2\,(t-1)$.
 \item If there exists $b'\in B'$ with $\deg (b')\geq t-1$ then go to (1)  with $t:=t+1$.   
 \item Let $[f^{*}_{F',B'}, \Lambda'] := \textsc{Optimal Linear Form} (f,B',\pi_{F',B'})$.
\item If $f^{*}_{F',B'}=\tilde{f}$ and $\ker
        H_{\Lambda'}^{B'_{t-1}}=\{0\}  \Rightarrow$ stop=true, $F=F'$,
         $B=B'$, $\tilde{f}= f^{*}_{F,B_{t-1}}$.
 \item[] else go to 1  with $t:=t+1$.   
      \end{enumerate}
 \end{enumerate}
\KwOut{A basis $B$ of ${\mathcal A} = \R[\xx]/I_{min}(f)$, a border
  basis $F$ of $I_{min} (f)$ for $B$ and the minimum $\tilde{f}$.}
\end{algorithm2e}

\subsection{Correctness of the algorithm}
In this subsection, we analyse the correctness of the algorithm.

\begin{lem}\label{lemextend}
Let $t\in \N$, $B\subset \R[\xx]_{2t}$ be a monomial set connected to $1$,
$F\subset \R[\xx]$ be a border basis for $B$ in degree $\leq 2t$ and
 $G=\Span{B_{t}\cdot B_{t}}\cap \SpanD{F}{2t}$.
Let $E\subset \R[\xx]_{t}$ be a vector space containing $B^{+}\cap \R[\xx]_{t}$
and $G'=\Span{E\cdot E}\cap \SpanD{F}{2t}$. 
For all
$\Lambda \in \Lc_{G,B_{t},\succcurlyeq}$, there exists a unique
$\tilde{\Lambda} \in \Lc_{G',E,\succcurlyeq}$ which extends $\Lambda$.
Moreover, $\tilde{\Lambda}$ satisfies
$\rank\, H_{\tilde{\Lambda}}^{E} = \rank\, H_{{\Lambda}}^{B_{t}}$ and
$\ker H_{\tilde{\Lambda}}^{E} = \ker H_{{\Lambda}}^{B_{t}} + \SpanD{F}{t}\cap E$.
\end{lem}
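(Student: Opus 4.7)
The plan is to use the direct-sum decomposition provided by the border-basis hypothesis as the engine of the proof. Since $F$ is a border basis for $B$ in degree $\le 2t$, Theorem~\ref{thmnfdegd} (applied in each degree $d\le 2t$) gives $\R[\xx]_{d}=\Span{B}_{d}\oplus\SpanD{F}{d}$; write $\pi_{F,B}$ for the associated projection. Because $F$ is graded, $\pi_{F,B}$ does not raise degree, so in particular $\pi_{F,B}(\R[\xx]_{t})=\Span{B}_{t}=\Span{B_{t}}$.

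First, I would lift the decomposition from $\R[\xx]_{t}$ to $E$ and then to $\Span{E\cdot E}$. Using $B_{t}\subset B^{+}\cap\R[\xx]_{t}\subset E$, the splitting $e=\pi_{F,B}(e)+(e-\pi_{F,B}(e))$ shows $E=\Span{B_{t}}\oplus(E\cap\SpanD{F}{t})$ (directness from $\Span{B}_{t}\cap\SpanD{F}{t}=\{0\}$). Expanding
\[
ee'=\pi_{F,B}(e)\,\pi_{F,B}(e')+\pi_{F,B}(e)\bigl(e'-\pi_{F,B}(e')\bigr)+\bigl(e-\pi_{F,B}(e)\bigr)\,e',
\]
the first summand lies in $\Span{B_{t}\cdot B_{t}}$ while the last two lie in $\Span{E\cdot E}\cap\SpanD{F}{2t}=G'$. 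Hence $\Span{E\cdot E}=\Span{B_{t}\cdot B_{t}}+G'$, and since $\Span{B_{t}\cdot B_{t}}\subset\Span{E\cdot E}$, the intersection $\Span{B_{t}\cdot B_{t}}\cap G'=\Span{B_{t}\cdot B_{t}}\cap\SpanD{F}{2t}=G$.

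Next, I define $\tilde\Lambda(p):=\Lambda(q)$ whenever $p=q+g$ with $q\in\Span{B_{t}\cdot B_{t}}$ and $g\in G'$. Two such decompositions differ by an element of $\Span{B_{t}\cdot B_{t}}\cap G'=G$, on which $\Lambda$ vanishes, so $\tilde\Lambda$ is well defined. The conditions $\tilde\Lambda(1)=1$ and $\tilde\Lambda\perp G'$ follow by the natural choice of representatives, and the identity $e^{2}=\pi_{F,B}(e)^{2}+g_{e}$ with $g_{e}\in G'$ gives $\tilde\Lambda(e^{2})=\Lambda(\pi_{F,B}(e)^{2})\ge 0$ for $e\in E$; thus $\tilde\Lambda\in\Lc_{G',E,\succcurlyeq}$ and it extends $\Lambda$. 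Uniqueness is immediate: any other extension must agree with $\tilde\Lambda$ on both summands $\Span{B_{t}\cdot B_{t}}$ and $G'$ of $\Span{E\cdot E}$.

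For the kernel and rank identities, the same bilinear reduction shows
\[
\tilde\Lambda(eq)=\Lambda\bigl(\pi_{F,B}(e)\,\pi_{F,B}(q)\bigr)\qquad\text{for all }e,q\in E.
\]
Because $\pi_{F,B}(E)=\Span{B_{t}}$, this yields $e\in\ker H^{E}_{\tilde\Lambda}\iff\pi_{F,B}(e)\in\ker H^{B_{t}}_{\Lambda}$. Combined with $e=\pi_{F,B}(e)+(e-\pi_{F,B}(e))$ this gives $\ker H^{E}_{\tilde\Lambda}=\ker H^{B_{t}}_{\Lambda}\oplus(E\cap\SpanD{F}{t})$, and a dimension count in $E=\Span{B_{t}}\oplus(E\cap\SpanD{F}{t})$ then yields $\rank H^{E}_{\tilde\Lambda}=\rank H^{B_{t}}_{\Lambda}$. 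The step I expect to be most delicate is the degree bookkeeping: verifying that $e-\pi_{F,B}(e)\in\SpanD{F}{t}$ (not merely $\SpanD{F}{2t}$) whenever $e\in\R[\xx]_{t}$ is what makes the splitting of $E$ and the control of the cross-terms legitimate, and this is precisely where both the graded hypothesis on $F$ and the applicability of Theorem~\ref{thmnfdegd} in every degree $\le 2t$ intervene.
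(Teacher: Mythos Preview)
Your proof is correct and follows essentially the same route as the paper: both arguments hinge on the border-basis decomposition $\R[\xx]_{d}=\Span{B}_{d}\oplus\SpanD{F}{d}$ from Theorem~\ref{thmnfdegd}, define $\tilde\Lambda$ via this splitting, and obtain the kernel and rank identities by reducing elements of $E$ along $E\cap\SpanD{F}{t}$ to $\Span{B_{t}}$. Your formulation $\Span{E\cdot E}=\Span{B_{t}\cdot B_{t}}+G'$ with intersection $G$ is in fact a slightly cleaner version of the paper's (the paper writes $\Span{E\cdot E}=\Span{B}_{2t}\oplus G'$, which tacitly assumes $\Span{B}_{2t}\subset\Span{E\cdot E}$), and your explicit identity $\tilde\Lambda(eq)=\Lambda(\pi_{F,B}(e)\pi_{F,B}(q))$ packages the three kernel inclusions of the paper into one step.
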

\begin{proof}
Suppose that $F\subset \R[\xx]$ be a border basis for $B$ in degree $\leq 2t$, that
is, all boundary polynomials of $C^+(F_{2\,t})$ reduces to $0$ by
 $F_{2\,t}$. Then by Theorem \ref{thmnfdegd}, we have
 $\R[\xx]_{2\,t}=\Span{B}_{2\,t} \oplus \SpanD{F}{2\,t}$ and
 $\Span{E\cdot E}=\Span{B}_{2\,t} \oplus  G'$,
 $\Span{B_{t}\cdot B_{t}}=\Span{B_{2\,t} \cap B_{t}\cdot B_{t}} \oplus  G$.
Thus for all $\Lambda\in \Lc_{G,B_{t},\succcurlyeq}$,
there exists a unique $\tilde{\Lambda}\in  \Span{E\cdot E}^{*}$
such that $\tilde{\Lambda}(G')=0$ and 
$\tilde{\Lambda} (b) = \Lambda (b)$ for all $b\in B_{t}\cdot B_{t}$. 

As $E\cdot (E\cap \SpanD{F}{t}) \subset 
\Span{E\cdot E}\cap \SpanD{F}{2t}=G'$, we have 
$\tilde{\Lambda} ( E\cdot  (\SpanD{F}{t}\cap E) ) =0$ so that 
\begin{equation}\label{eq:incl1}
\SpanD{F}{t}\cap E \subset \ker H_{\tilde{\Lambda}}^{E}.
\end{equation}

For any element $b\in \ker H_{{\Lambda}}^{B_{t}}$ we have
$\forall b'\in B_{t}, \Lambda (b\,b')=\tilde{\Lambda} (b\,b') = 0$. As
$\tilde{\Lambda} (B_{t}\cdot (\SpanD{F}{t}\cap E))=0$ and $E= \Span{B_{t}} \oplus
(\SpanD{F}{t}\cap E)$, for any element $e\in E$, $\tilde{\Lambda}
(b\,e) = 0$. This proves that 

\begin{equation}\label{eq:incl2}
\ker H_{{\Lambda}}^{B_{t}}\subset \ker H_{\tilde{\Lambda}}^{E}.
\end{equation}
 
Conversely as $E= \Span{B_{t}} \oplus (\SpanD{F}{t}\cap E)$, any element
of $E$ can be reduced modulo $\SpanD{F}{t}\cap E$ to an
element of $\Span{B_{t}}$, which shows that 
\begin{equation}\label{eq:incl3}
\ker H_{\tilde{\Lambda}}^{E} \subset \ker H_{{\Lambda}}^{B_{t}} +
\SpanD{F}{t}\cap E.
\end{equation}

From the inclusions \eqref{eq:incl1}, \eqref{eq:incl2} and \eqref{eq:incl3}, we deduce that $\ker
H_{\tilde{\Lambda}}^{E} = \ker H_{{\Lambda}}^{B_{t}} +
(\SpanD{F}{t}\cap E)$
and that $\rank\, H_{\tilde{\Lambda}}^{E} = \rank\, H_{{\Lambda}}^{B_{t}}$.

As $\Lambda \succcurlyeq 0$,
 by projection along $(\SpanD{F}{t}\cap E)\subset 
\ker H_{\tilde{\Lambda}}^{E}$, 
$\forall p\in E$ there exists $b\in \Span{B_{t}}$ such that
$\tilde{\Lambda} (p^{2})= \Lambda (b^{2})\geq 0$. 
Thus $\tilde{\Lambda}\succcurlyeq 0$ which ends the proof of this lemma.
\end{proof}

\begin{lem}\label{algostopbien}
If the algorithm terminates, then $F'$ is a border basis of $I_{min}
(f)$ for $B'$.
\end{lem}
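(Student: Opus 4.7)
At termination the algorithm produces a border basis $F'$ for $B'$ in degree $\le 2(t-1)$ and a linear form $\Lambda'$ for which $\Lambda'(f)=\tilde f$, $\ker H_{\Lambda'}^{B'_{t-1}}=\{0\}$, and every $b'\in B'$ has degree $\le t-2$; in particular $B'_{t-1}=B'$ and $E:=\Span{B'^{+}}\subset \R[\xx]_{t-1}$. The plan is to lift $\Lambda'$ to a probability measure on $\R^n$ via Lemma~\ref{lemextend} and Theorem~\ref{theoflatextension}, and then argue as in Theorem~\ref{caszero} that this measure is supported exactly on the global minimizers of $f$.

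First I would apply Lemma~\ref{lemextend} with its parameter $t$ set to $t-1$, $B=B'$, $F=F'$, and $E=\Span{B'^{+}}$: it extends $\Lambda'$ uniquely to $\tilde\Lambda'\in\Lc_{G',E,\succcurlyeq}$ with $\rank H_{\tilde\Lambda'}^{E}=\rank H_{\Lambda'}^{B'}=|B'|$ and $\ker H_{\tilde\Lambda'}^{E}=\SpanD{F'}{t-1}\cap E$. Because $E=\Span{B'^{+}}$, the flat extension condition $\rank H_{\tilde\Lambda'}^{B'^{+}}=\rank H_{\tilde\Lambda'}^{B'}=|B'|$ holds, so Theorem~\ref{theoflatextension} produces $\hat\Lambda\in\R[\xx]^{*}$ extending $\tilde\Lambda'$ with $\ker H_{\hat\Lambda}=(\ker H_{\tilde\Lambda'}^{B'^{+}})$. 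Since every $f'\in F'$ lies in $E$ with $\deg f'\le t-1$, the sandwich $F'\subset\ker H_{\tilde\Lambda'}^{B'^{+}}\subset (F')$ gives $\ker H_{\hat\Lambda}=(F')$. Reducing any $p\in\R[\xx]$ modulo $F'$ to $b\in\Span{B'}$ shows $\hat\Lambda(p^{2})=\Lambda'(b^{2})\ge 0$, so $\hat\Lambda\succcurlyeq 0$ with $\hat\Lambda(1)=1$. Proposition~\ref{proppositive} then writes $\hat\Lambda=\sum_{i=1}^{r}\lambda_{i}\ev_{\zeta_{i}}$ with $\lambda_{i}>0$ and distinct $\zeta_{i}\in\R^{n}$, and $(F')=I(\zeta_{1},\ldots,\zeta_{r})$ is a zero-dimensional real radical ideal.

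It remains to show $\tilde f=f^{*}$, which will force each $\zeta_{i}$ to be a global minimizer of $f$ and, combined with $(F')\subset I_{min}(f)$ and real radicality, yield $(F')=I_{min}(f)$. The bound $\tilde f\ge f^{*}$ is immediate from $\hat\Lambda$ being a probability measure on $\R^n$ and $f\ge f^{*}$. For $\tilde f\le f^{*}$ I would use the invariant $(F)\subset I_{min}(f)$ maintained through the iterations: initially $F=\{\partial f/\partial x_{i}\}\subset I_{grad}(f)\subset I_{min}(f)$; commutation updates preserve $(F)$; and kernel elements appended at step~4(c) lie in $I_{min}(f)$ by Theorem~\ref{kermin}, whose hypothesis $\Lambda^{*}(f)=f^{*}$ is supplied at termination by Theorem~\ref{mayor} together with the stabilization filter of step~4(b). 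Granted the invariant, Proposition~\ref{minor} gives $\tilde f=f^{*}_{F',B'}\le f^{*}$, hence $\tilde f=f^{*}$; the SoS decomposition from Proposition~\ref{nogap} (applicable since $\ker H_{\Lambda'}^{B'}=\{0\}$) then forces $f(\zeta_{i})=f^{*}$ for each $i$. The main obstacle is securing the invariant at termination, which requires harmonizing the monotonicity of $t\mapsto f^{*}_{F,B_{t}}$, the effectivity Theorem~\ref{mayor}, and the role of step~4(b) in blocking premature kernel augmentation.
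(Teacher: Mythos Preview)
Your flat-extension and probability-measure steps track the paper's proof closely and are fine. The gap is in the inequality $\tilde f\le f^{*}$: you apply Proposition~\ref{minor} with $G=F'$, which needs $(F')\subset I_{min}(f)$, and for that you invoke Theorem~\ref{kermin} on the kernel elements, whose hypothesis is precisely $\Lambda^{*}(f)=f^{*}$. This is circular. Your attempt to close the loop via Theorem~\ref{mayor} does not work either: that theorem only guarantees $\Lambda^{*}(f)=f^{*}$ once $t\ge t_{1}$, and the stopping criteria never force the algorithm to reach that threshold --- step~4(b) merely checks that the value did not change between two consecutive iterations, which can occur at any degree. You flag this yourself as ``the main obstacle,'' and it is a real one.

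The paper dissolves the circularity with an observation you missed about the algorithm's bookkeeping. Throughout the main loop the set $F$ (unprimed) is augmented only by commutation relations in steps~1--3, and these already lie in the ideal $(F)$; kernel elements are used to build $F'$ in step~4(c), but $F$ itself is not overwritten by $F'$ until termination in step~4(f). Hence $(F)=I_{grad}(f)\subset I_{min}(f)$ is automatic --- there is no delicate invariant to maintain. Now use the chain of equalities the algorithm records: passing step~4(b) gives $\tilde f=f^{*}_{F,B_{t}}=\Lambda^{*}(f)$, and step~4(f) gives $\tilde f=f^{*}_{F',B'}=\Lambda'(f)$. Apply Proposition~\ref{minor} with $G$ equal to the \emph{unprimed} $F$ to obtain $\tilde f=f^{*}_{F,B_{t}}\le f^{*}$ directly. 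Combined with your lower bound $\tilde f\ge f^{*}$ from the measure representation, this yields $\Lambda^{*}(f)=f^{*}$; only \emph{then} does Theorem~\ref{kermin} give $(F')=(F+\ker H_{\Lambda^{*}}^{B_{t}})\subset I_{min}(f)$, and Theorem~\ref{caszero} finishes with $(F')=I_{min}(f)$. With this reordering --- prove $\tilde f=f^{*}$ first using $F$, deduce $(F')\subset I_{min}(f)$ afterwards --- your argument goes through. (A minor aside: $f(\zeta_{i})=f^{*}$ follows directly from the convex combination $\sum\lambda_{i}f(\zeta_{i})=f^{*}$ with $\lambda_{i}>0$ and $f(\zeta_{i})\ge f^{*}$; Proposition~\ref{nogap} is not needed there.)
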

\begin{proof}
If the algorithme stops, this can happen only in step (4) in some degree
$t+1$ such that 
\begin{itemize}
 \item $[f^{*}_{F,B_{t+1}}, \Lambda^{*}]:= \textsc{Optimal Linear Form} (f,B_{t+1},F)$,
 \item $F'$ is a border basis of $F + \ker H_{\Lambda^*}^{B_{t}}$ for $B'$ in degree $\leq 2\,t$,
 \item the monomials in $B'$ are 
of degree $< t$ so that $B'_{t}=B'$,
 \item $[f^{*}_{F',B'}, \Lambda'] := \textsc{Optimal Linear Form} (f,B',F')$ 
with $\tilde{f}=\Lambda' (f)= f^{*}_{F',B'}= \Lambda^{*}(f) =
f^{*}_{F,B_{t}}$ and $\ker  H_{\Lambda'}^{B'}=\{0\}$.
\end{itemize}
Then $\Span{B'^{+}} = \Span{B'}\oplus \Span{F'}$. By Lemma
\ref{lemextend} with $E=\Span{B'^{+}}$, the linear form
$\Lambda' \in \Lc_{F',B',\succcurlyeq}$ can be extended to a linear
form ${\Lambda}'' \in \Lc_{F'',B'^{+},\succcurlyeq}$ with
$F''=\SpanD{F'}{2t}\cap \Span{B'^{+}\cdot B'^{+}}$ such that $\ker
H_{{\Lambda}''}^{B'^{+}}= \SpanD{F'}{t}\cap \Span{B^{+}}= \Span{F'}$. As $\ker
H_{\Lambda'}^{B'}=\{0\}$ 
and $\Span{B'^{+}} = \Span{B'}\oplus \Span{F'}$,
we have $\rank\, H_{{\Lambda}''}^{B'^{+}} = \rank\, H_{\Lambda'}^{B'}=|B'|$
and the flat extension theorem (Theorem \ref{theoflatextension})
applies: $\Lambda''$ is the restriction to $\Span{B'\cdot B'}$
of a positive linear form 
$$ 
{\tilde{\Lambda}}'= \sum_{i=1}^{r} \lambda_{i}\, \ev_{\zeta_{i}}
$$
with $r=|B'|$, $\zeta_{i}\in \R^{n}$ distinct, $\lambda_{i}>0$ and
$\sum_{i=1}^{r} \lambda_{i}=1$. 
Then $\Lambda' (f) =\sum_{i=1}^{r} \lambda_{i} f (\zeta_{i}) \geq
f^{*} $. By hypothesis, 
$\Lambda' (f) = f^{*}_{F_{2\,t},B_{t}} \le f^{*}$ since $F\subset I_{grad} (f)
\subset I_{min} (f)$. We deduce 
that $\Lambda' (f) = \Lambda^{*} (f) = f^{*}$, 
that $(F_{2t}+ \ker H_{\Lambda^{*}}^{B_{t}})= (F') \subset I_{min}(f)$ by Theorem \ref{kermin}
and that $(F')= I_{min}(f)$ by Theorem \ref{caszero}.
Therefore $F'$ is a border basis of $I_{min}(f)$ for $B'$. 
\end{proof}
 
\begin{prop}
 Assume that $I_{min}(f)$ is zero-dimensional. Then the algorithm terminates. It outputs a border basis $F$ for $B$ connected to 1, such that $\R[\xx]=\< B \> \oplus (F)$ 
 and $(F)=I_{min}(f)$.
\end{prop}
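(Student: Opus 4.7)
The plan is first to note that correctness of the output, assuming termination, is already supplied by Lemma \ref{algostopbien}: whenever the algorithm exits the main loop via step 4(f) with $\text{stop}=\text{true}$, the pair $(F',B')$ is a border basis of $I_{min}(f)$ connected to $1$, with $\R[\xx]=\Span{B'}\oplus (F')$ and $(F')=I_{min}(f)$. So the only remaining task is to prove that termination occurs under the hypothesis that $I_{min}(f)$ is zero-dimensional.

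For termination I would combine Theorems \ref{mayor}, \ref{thprin}, and \ref{caszero}. Let $t_1$ be the bound from Theorem \ref{mayor} beyond which $f^{\mu}_{F,t}=f^*$, let $t_2$ be the bound from Theorem \ref{thprin} beyond which a generic $\Lambda^*\in \mathcal{L}_{F,t,\succcurlyeq}$ satisfies $(\ker H_{\Lambda^*}^t)=I_{min}(f)$, and let $t_3$ be the maximum degree appearing in $B^{*+}$ where $B^*$ is a quotient basis of $\R[\xx]/I_{min}(f)$. Since $I_{min}(f)$ is the vanishing ideal of a finite real set, it is real radical (hence radical), so $|V(I_{min}(f))|=|B^*|=\dim_\R \R[\xx]/I_{min}(f)$ and the evaluation map $\Span{B^*}\to \R^{|B^*|}$ is injective. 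Fix $t>\max(t_1,t_2,t_3)+1$ and trace through the loop at that iteration: after finitely many iterations the increasing sequence of rewriting families $F$ stabilizes (being bounded in each fixed truncation degree by the finitely generated $I_{min}(f)$), and an interior-point SDP solver returns $\Lambda^*$ in the relative interior of the optimal face of $\mathcal{L}_{F,B_t,\succcurlyeq}$, hence generic for $f$. By Theorem \ref{thprin}, $\Lambda^*(f)=f^*$, which equals the previously recorded $\tilde f$ by the choice of $t$, so condition (b) does not trigger. The border basis $F'$ computed in step (c) is a border basis of $I_{min}(f)$ with quotient basis $B'=B^*$, and every element of $B^*$ has degree at most $t_3<t-1$, so condition (d) is satisfied. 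In step (e), the SDP on $B'$ admits the convex combination $\sum_i \lambda_i\ev_{\zeta_i}$ of evaluations at the real minimizers $\zeta_i$ of $f$ (with $\lambda_i>0$, $\sum_i\lambda_i=1$) as a feasible point, achieving $f^*_{F',B'}=f^*$ with trivial kernel on $\Span{B'}$ by the injectivity above; genericity of the solver's output then produces $\Lambda'$ with $\ker H_{\Lambda'}^{B'}=\{0\}$ and $f^*_{F',B'}=f^*=\tilde f$, so the stopping condition in step (f) triggers.

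The one condition I have not yet addressed is the no-duality-gap test in step (a). For $t$ large enough, once $\mathcal{L}_{F,B_t,\succcurlyeq}$ contains a linear form with trivial kernel on $B_t$, Proposition \ref{nogap} guarantees $f^{sos}_{F,B_t}=f^{\mu}_{F,B_t}$; such a Slater point can be assembled from a positive combination of evaluations at sufficiently many distinct real zeros of $F$, since $F\subset I_{min}(f)$ always has the minimizers among its real zeros. The main obstacle is coordinating all these bounds simultaneously at a single iteration and justifying the genericity assumption on the SDP output: the former amounts to verifying that successive updates to $F$ inside $I_{min}(f)$ only shrink the feasible set while preserving every kernel-generation and flat-extension statement invoked above, and the latter is the standard behavior of interior-point SDP solvers returning a point in the relative interior of the optimal face.
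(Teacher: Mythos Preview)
Your overall strategy matches the paper's: reduce correctness to Lemma \ref{algostopbien}, then argue termination by showing that once $t$ is large enough and the rewriting family $F$ has stabilized, the stopping test in step (4)(f) must fire. However, there is a genuine gap in how you invoke Theorem \ref{thprin}.

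The linear form $\Lambda^*$ returned by \textsc{Optimal Linear Form}$(f,B_t,\pi_{F,B_t})$ lives in $\mathcal{L}_{G,B_t,\succcurlyeq}$, i.e.\ it is only defined on $\Span{B_t\cdot B_t}$. Theorem \ref{thprin}, on the other hand, is stated for $\Lambda^*\in\mathcal{L}_{G,t,\succcurlyeq}$, which by Definition \ref{mintruncated} means $E=\R[\xx]_t$ and the Hankel operator $H_{\Lambda^*}^t$ is indexed by \emph{all} monomials of degree $\le t$. You apply the theorem directly to the $B_t$-truncated object, and likewise assert that the border basis of $F+\ker H_{\Lambda^*}^{B_{t-1}}$ is a border basis of $I_{min}(f)$, without explaining why the kernel computed on $B_{t-1}$ captures $I_{min}(f)$ rather than some smaller ideal. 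This transfer is precisely the content of Lemma \ref{lemextend}: once $F$ has stabilized into a border basis for $B$ in degree $\le 2t$, every $\Lambda\in\mathcal{L}_{G,B_t,\succcurlyeq}$ extends uniquely to $\tilde\Lambda\in\mathcal{L}_{G',\R[\xx]_t,\succcurlyeq}$ with $\ker H_{\tilde\Lambda}^{t}=\ker H_{\Lambda}^{B_t}+\SpanD{F}{t}\cap\R[\xx]_t$ and the same rank. Only after this extension can Theorem \ref{thprin} be applied to conclude $\tilde\Lambda^*(f)=f^*$ and $(\ker H_{\tilde\Lambda^*}^{t})=I_{min}(f)$, from which $(F+\ker H_{\Lambda^*}^{B_t})=I_{min}(f)$ then follows. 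The paper's proof is organized around exactly this step; without it, neither your equality $\Lambda^*(f)=f^*$ nor your identification $B'=B^*$ is justified.

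A secondary point: your Slater argument for step (a) proposes a positive combination of evaluations at real zeros of $F$, but such a $\Lambda$ has $\ker H_\Lambda^{B_t}=\{0\}$ only if those points separate all monomials in $B_t$, which need not hold before $F$ has stabilized and $B_t$ has been cut down. The paper's proof sidesteps this by arguing by contradiction and focusing on the iterations after $F$ has stabilized into a border basis in degree $\le 2t$.
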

\begin{proof}
 First, we are going to prove by contradiction that when $I_{min}(f)$ is zero-dimensional, the algorithm terminates.
 Suppose that the loop goes for ever. Notice that at each step either $F$ is extended by adding new linearly independent polynomials or it moves to $t+1$. Since the number of 
 linearly independent polynomials added to $F$ in degree $\le 2\, t$
 is finite, there is a step in the loop from which $F$ is not modified
 any more in degree $\le$ 2\, t. In this case, all boundary
 C-polynomials of elements of $F$ of degree $\le 2\, t$ are reduced to $0$ by
 $F_{2\, t}$
and $F_{2t}$ is a border basis for $B_{2t}$ in degree $\leq 2t$. 
By Lemma \ref{lemextend} with $E=R_{t}$, $\Lambda^{*}$ extends to a
 linear form $\tilde{\Lambda}^{*}\in \R[\xx]_{2\, t}^{*}$ such that
$$ 
K^{t}:=\ker H_{\tilde{\Lambda}^{*}}^{t} = \ker H_{\Lambda^{*}}^{B_{t}}+\SpanD{F}{t}.
$$
By Theorem \ref{thprin}, for $t\ge t_{2}$ we have 
$( K^{t} ) = I_{min} (f)$ and $\Lambda^{*} (f)=f^{*}$. 
As $I_{min} (f)$ is zero-dimensional, for $t$ high enough, a border basis $F'$ of $K^{t}$ in degree
$\le 2\, t$ is a border basis of $( K^{t} ) = I_{min} (f)$. Let $B'$
be the corresponding monomial basis. Then $r:=|B'|$ is the number of
minimizers (ie. the points in $V (I_{min} (f))$).
By Lemma \ref{lemextend} with $E=\Span{B'^{+}}$ and Theorem \ref{caszero}, any linear
form $\Lambda'\in \Lc_{F',B,\succcurlyeq}$ generic for $f$ is the
restriction of a positive linear form 
$$ 
\tilde{\Lambda}'= \sum_{i=1}^{r} \lambda_{i}\, \ev_{\zeta_{i}}
$$
with $\{\zeta_{1}, \ldots, \zeta_{r}\} = V (I_{min} (f))$ and
$\lambda_{i}>0$ and $\sum_{i=1}^{r} \lambda_{i}=1$.
In this case, $\Lambda' (f)= \sum_{i=1}^{r} \lambda_{i}\, f
({\zeta_{i}})= f^{*}$ and $\ker H^{B '}_{\Lambda'}=\{0\}$.
We arrive at a contradiction, which shows that the algorithm should stops
in step (4), for some degree $t$. 

By Lemma \ref{algostopbien}, $F'$ is a border basis of $I_{min} (f)$
with basis $B'$ and $\Lambda' (f)= f^{*}$.
\end{proof}

\section{Examples}
This section contains examples which illustrate the behavior of the
algorithm. In the first example of Motzkin polynomial, the gradient ideal is not
zero-dimensional whereas that in the second example of Robinson polynomial, the
gradient ideal is zero-dimensional. In all the examples the minimizer
ideal is zero-dimensional hence when we apply our algorithm we obtain
the good result in a finite number of steps.

The implementation of the previous algorithm has been performed using
the {\sc borderbasix}\footnote{www-sop.inria.fr/galaad/mmx/borderbasix} package of the {\sc Mathemagix}\footnote{www.mathemagix.org} software. {\sc
  borderbasix} is a {\tt C++} implementation of the border basis algorithm of
\cite{BMPhT12}.

Semidefinite positive Hankel operators are computed using the semi
definite programming routine of \textsc{sdpa}\footnote{sdpa.sourceforge.net} software. For the
link with \textsc{sdpa} we use a file interface since \textsc{sdpa} is not
distributed as a library.

For the computation of border basis, we use as a choice function that is tolerant
to numerical instability i.e. a choice function that chooses as
leading monomial a monomial whose coefficient is maximal among the
choosable monomials. This, according to \cite{BMPhT08}, makes the border
basis computation stable with respect to numerical perturbations.
This property is fundamental as we use results from SDP solvers to get
new equations. And as these equations are computed numerically, the
computation is subject to numerical errors. 

Once the border basis of the minimizer is computed, the roots are
obtained using the numerical routines described in \cite{SGPhT09}.

Experiments are made on an Intel Corei7 2.30GHz with 8Gb of RAM.

In the following examples, we use the natation
$\overline{H}_{\Lambda}^t=H_{\Lambda}^{B_t}$

\begin{example}

We consider the Motzkin polynomial
\begin{equation*}
 f(x,y)=1+x^4y^2+x^2y^4-3x^2y^2
\end{equation*}
which is non negative on $\R^2$ but not a sum of squares in $\R[x,y]$.
 We compute its gradient ideal,
 $I_{grad}(f)=(-6xy^2+2xy^4+4x^3y^2,-6yx^2+2yx^4+4y^3x^2)$ 
which is not zero-dimensional.
    
\begin{itemize}
 \item In the first iteration the degree is 3, the size of the Hankel
   matrix $\overline{H}_{\Lambda}^3$ is 10, $\min \Lambda(f) =-216$, there is a duality gap hence we try with degree 4.
 \item In the second iteration the degree is 4, the size of the Hankel
   matrix $\overline{H}_{\Lambda}^4$ is 15, min $\Lambda(f) =0$, there is no
   duality gap. The minimum differs from the previous minimum, so
   a new iteration is needed.
 \item In the third iteration the degree is 5, the size of the Hankel matrix $\overline{H}_{\Lambda}^5$ is 19 and $\min \Lambda(f) =0$.
       The minimums are equal hence we compute the kernel of
       $\overline{H}_{\Lambda}^4$, which is generated by
       5 polynomials. We compute the border basis 
       and obtain the basis $B=\{1,x,y,xy\}$. All the elements of $B$ have degree $<4$. 
 \item We compute a generic form for $f$ with this border basis, the
   size of the Hankel matrix $\overline{H}_{\Lambda}^4$ is 4, $\min \Lambda(f)$ =0 and $\ker \overline{H}_{\Lambda}^4=\{0\}$.
\end{itemize}

 After the fourth iteration the algorithm stops and we obtain 
   \begin{enumerate}
   \item $I_{min}=(x^2-1,y^2-1)$.
    \item The basis $B=\{1,x,y,xy\}$.
    \item The points which minimize f, $\{(x=1,y=1),(x=1,y=-1),(x=-1,y=1),(x=-1,y=-1)\}$.
    \end{enumerate}
The complete process of resolution took $0.286s$ on which
$0.154s$ is spent computing SDP.  

\end{example}    
    
\begin{example}
We consider the Robinson polynomial,
\begin{equation*}
 f(x,y)=1+x^6-x^4-x^2+y^6-y^4-y^2-x^4y^2-x^2y^4+3x^2y^2
\end{equation*}
which is non negative on $\R^2$ but not a sum of squares in $\R[x,y]$.
 We compute its gradient ideal
$I_{grad}(f)=(6x^5-4x^3-2x-4x^3y^2-2xy^4+6xy^2,6y^5-4y^3-2y-4y^3x^2-2yx^4+6yx^2)$,\\
which is not zero-dimensional.

\begin{itemize}
 \item In the first iteration the degree is 3, the size of the Hankel matrix $\overline{H}_{\Lambda}^3$ is 10, min $\Lambda(f) =-0.93$, there is no duality gap hence we compare the minimum with degree 4.
 \item In the second iteration the degree is 4, the size of the Hankel
   matrix $\overline{H}_{\Lambda}^4$ is 15, $\min \Lambda(f) =0$. As the minimums
   are different, another iteration is needed.
 \item In the third iteration the degree is 5, the size of the Hankel
   matrix $\overline{H}_{\Lambda}^5$ is 19 and $\min \Lambda(f)$ =0.
       The minimums are equal, hence we compute the kernel of
       $\overline{H}_{\Lambda}^4$, which is
       generated by 6 polynomials. We compute the border basis 
       and we obtain \\ $B=\{1,x,y,x^2,xy,y^2,x^2y,xy^2,x^2y^2\}$. There exists an element of $B$ with degree $\geq 4$, so we go
       to the next degree.
 \item We compute a generic form $\Lambda$ for $f$ in degree $6$, the size of the Hankel
   matrix $\overline{H}_{\Lambda}^6$ is 22 and $\min \Lambda(f)$ =0.
   The minimums are equal, hence we compute the kernel of
   $\overline{H}_{\Lambda}^5$, which is generated by 11 polynomials. We compute the border basis and obtain
   the basis $B=\{1,x,y,x^2,xy,y^2,x^2y,xy^2\}$. All the elements of $B$ have degree $<5$. 
 \item We compute a generic form for $f$ with this border basis, the
   size of the Hankel matrix $\overline{H}_{\Lambda}^5$ is 8, $\min \Lambda(f)$ =0 and $\ker \overline{H}_{\Lambda}^5=\{0\}$.
\end{itemize}

 After fourth iteration the algorithm stops and we obtain 
   \begin{enumerate}
    \item $I_{min}=(x^3-x,y^3-y,x^2y^2-x^2-y^2+1)$.
    \item The basis $B=\{1,x,y,x^2,xy,y^2,x^2y,xy^2\}$.
    \item The points which minimize f, $\{(x=1,y=1),(x=1,y=-1),(x=-1,y=1), (x=-1,y=-1),
        (x=1,y=0),(x=-1,y=0),(x=0,y=1),(x=0,y=-1)\}$.
    \end{enumerate}
The complete process of resolution took $0.315s$ on which $0.134s$ is spent computing SDP.
 \end{example}
 
 \begin{example}
 We consider the polynomial,
 \begin{equation*}
  f(x,y)=-12x^3+3xy^2+4y^3-16x^2y+48x^2-12y^2
 \end{equation*}

  We compute its gradient ideal, $I_{grad}(f)=(-36x^2+3y^2-32xy+96x,6xy+12y^2-16x^2-24y)$\\
 \begin{itemize}
 \item In the first iteration the degree is 2, the size of the Hankel
   matrix $\overline{H}_{\Lambda}^2$ is 4, min $\Lambda(f) =-18.6$.
   The minimum differs from the previous minimum
   $-\infty$. Hence, a new iteration is needed.
 \item In the second iteration the degree is 3, the size of the Hankel
   matrix $\overline{H}_{\Lambda}^3$ is 4 and $\min \Lambda(f) =-18.6$.
       The minimums are equal hence we compute the kernel of
       $\overline{H}_{\Lambda}^2$, which is generated by 3 polynomials. We compute the border basis 
       and we obtain $B=\{1\}$. All the elements of $B$ have degree $<2$.
 \item We compute a generic form for $f$ with this border basis, the
   size of the Hankel matrix $\overline{H}_{\Lambda}^2$ is 1, $\min \Lambda(f) =-18.6$ and $\ker \overline{H}_{\Lambda}^2=\{0\}$.           
\end{itemize}
After second iteration the algorithm stops and we obtain 
   \begin{enumerate}
    \item $I_{min}=(x+0.43636,y-2.32727)$.
    \item The basis $B=\{1\}$.
    \item The points which minimize f, $\{(x=-0.43636,y=2.32727)\}$.
    \end{enumerate}
    
The complete process of resolution took $0.427s$ on which $0.130s$ were spent computing SDP.   
 \end{example}
 
 \begin{example}
 We consider the Leep and Starr polynomial,
 \begin{equation*}
  f(x,y)=16+x^2y^4+2x^2y^3-4x^3y^3+4xy^2+20x^2y^2+8x^3y^2+6x^4y^2+8xy-16x^2y
 \end{equation*}
that is positive on $\R^2$ but cannot be written as sum of squares in $\R[x,y]$.
 We compute its gradient ideal,\\ 
 $I_{grad}(f)=(2xy^4+4xy^3-12x^2y^3+4y^2+40xy^2+24x^2y^2+24x^3y^2+8y-32xy,4x^2y^3+6x^2y^2-12x^3y^2+8xy+40x^2y+16x^3y+12x^4y+8x-16x^2)$\\
 
 \begin{itemize}
 \item In the first iteration the degree is 3, the size of the Hankel matrix $\overline{H}_{\Lambda}^3$ is 10, min $\Lambda(f) =-5.4$, there is duality gap we try with degree 4 but we change the basis we try again with degree 3.
 \item In the second iteration the degree is 3, the size of the Hankel
   matrix $\overline{H}_{\Lambda}^3$ is 10, min $\Lambda(f) = 0.6$, there is
   no duality gap. As the minimum differs from the minimum in degree
   $2$, a new iteration step is performed.
 \item In the third iteration the degree is 4, the size of the Hankel matrix $\overline{H}_{\Lambda}^4$ is 11 (with the reduction) and min $\Lambda(f) =0.6$.
       The minimums are equal, hence we compute the kernel of
       $\overline{H}_{\Lambda}^3$, which is generated by 9 polynomials. We compute the border basis 
       and obtain the basis $B=\{1\}$. All the elements of $B$ have degree $<3$. 
 \item We compute a generic form for $f$ with this border basis, the
   size of the Hankel matrix $\overline{H}_{\Lambda}^3$ is 1, $\min \Lambda(f) =0.6$ and $\ker \overline{H}_{\Lambda}^3=\{0\}$.      
\end{itemize}
After the third iteration the algorithm stops and we obtain 
   \begin{enumerate}
    \item $I_{min}=(x+3.3884,y-0.14347)$.
    \item The basis $B=\{1\}$.
    \item The points which minimize f, $\{(x=-3.3884,y=0.14347)\}$.
    \end{enumerate}
The complete process of resolution took $0.417s$ on which
$0.130s$ were spent computing SDP.   
  \end{example}

The experiments suggest that due to the small size of the matrices
$\overline{H}_\Lambda^i$ most of the resolution time is spent during a classical
border basis computation, we all the more emphasize this, as we used a
file interface for communicating with {\tt sdpa} which is rather
slow.  


\end{document}
